\documentclass[11pt,twoside]{amsart}

\usepackage{amsmath, amsthm, amssymb, mathrsfs}
\usepackage{array}

\usepackage{hyperref}

\usepackage[normalem]{ulem}
\usepackage{color}

\input xy
\xyoption{all}

\newcommand{\Hom}{\mathrm{Hom}}

\newcommand{\brokrarr}{\vphantom{\to}\mathrel{\smash{{-}{\rightarrow}}}}

\newcommand{\Aut}{\mathrm{Aut}}
\newcommand{\Alt}{\mathrm{A}}   
\newcommand{\GL}{\mbox{\boldmath$\rm GL$}}

\newcommand{\Gal}{\mathrm{Gal}}

\newcommand{\tr}{\mathrm{\rm tr}}

\newcommand{\Z}{\mathbb{Z}}
\newcommand{\Gm}{\mathbb{G}}
\newcommand{\rk}{\mathrm{rank}}
\newcommand{\PP}{\mathbb{P}}

\newcommand{\Q}{\mathbb{Q}}

\newcommand{\Spec}{\mathrm{Spec}}
\newcommand{\Pic}{\mathrm{Pic}}

\newcommand{\Conv}{\mathrm{Conv}}
\newcommand{\Cone}{\mathrm{Cone}}
\newcommand{\omch}{\omega^{\vee}}

\newcommand{\Stab}{\mathrm{Stab}}
\newcommand{\Div}{\mathrm{Div}}
\newcommand{\divT}{\mathrm{div}}
\newcommand{\Thatdual}{\widehat{T}^0}
\newcommand{\ThatLdual}{\widehat{T}^0_L}

\newcommand{\GmL}{\Gm_{m,L}}
\newcommand{\Gmk}{\Gm_{m,\Bbbk}}
\newcommand{\Gmks}{\Gm_{m,\Bbbk_s}}
\newcommand{\cO}{\mathcal{O}}
\newcommand{\SF}{\mathrm{SF}}
\newcommand{\kgp}{\Bbbk-\mathrm{grp}}
\newcommand{\mult}{\mathrm{mult}}
\newcommand{\Span}{\mathrm{Span}}
\newcommand{\MP}{\mathrm{MP}}
\newcommand{\dP}{\mathrm{dP}}
\newcommand{\coker}{\mathrm{coker}}
\newcommand{\Orb}{\mathrm{Orb}}
\newcommand{\aff}{\mathrm{aff}}
\newcommand{\SK}{\mathrm{SK}}
\newcommand{\lcm}{\mathrm{lcm}}
\newcommand{\sgn}{\mathrm{sgn}}
\newcommand{\wt}{\mathrm{wt}}
\newcommand{\Aff}{\mathrm{Aff}}
\newcommand{\bk}{\Bbbk}
  
\def\A{\mathbb{A}}

\def\cO{\mathcal{O}}

\def\Z{\mathbb{Z}}
\def\Q{\mathbb{Q}}

\def\N{\mathbb{N}}
\def\R{\mathbb{R}}

\def\e{\mathbf{e}}

\def\Ind{\mathrm{Ind}}
\def\Cl{\mathrm{Cl}}

\def\P{\mathbb{P}}
\def\x{\mathbf{x}}

\def\f{\mathbf{f}}
\def\bc{\mathbf{c}}
\def\d{\mathbf{d}}
\def\id{\mathrm{id}}

\def\0{\mathbf{0}}
\def\bu{\mathbf{u}}
\def\v{\mathbf{v}}

\def\w{\mathbf{w}}
\def\n{\mathbf{n}}
\def\m{\mathbf{m}}

\newtheorem{thm}{Theorem}[section]
\newtheorem*{thm*}{Theorem}
\newtheorem{prop}[thm]{Proposition}
\newtheorem*{prop*}{Proposition}
\newtheorem{lem}[thm]{Lemma}
\newtheorem{cor}[thm]{Corollary}
\newtheorem*{cor*}{Corollary}

\theoremstyle{definition}   
\newtheorem{defn}[thm]{Definition}
\newtheorem{rem}[thm]{Remark}

\newtheorem*{ex*}{Example}

\title{Demazure models of Algebraic tori in dimension 4}

\author{Nicole Lemire}

\begin{document}

\begin{abstract}
  For a field $\bk$, algebraic $\bk$-tori are algebraic $\bk$-groups which become 
isomorphic to a split $\bk$-torus, after base change to the separable closure of 
$\bk$.   Algebraic $\bk$-tori  are classified up 
to isomorphism by continuous integral representations of the absolute Galois group over $\bk$.  In fact, an algebraic $\bk$-torus can always be split by a finite Galois extension $L/\bk$ with Galois splitting group $G=\Gal(L/\Bbbk)$.
Work of Voskresenskii  used the  existence of a smooth projective model 
of an algebraic $\bk$-torus in order to determine important invariants  to determine its birational properties.

A Demazure model of an algebraic $\bk$-torus $T$  split by Galois extension $L/\bk$ and splitting group $G=\Gal(L/\Bbbk)$ is a smooth projective $T_L$-toric variety $X_{\Sigma}$ such that the natural action of $G$ on $T_L$ extends to $X_{\Sigma}$.
A smooth projective model of the algebaic $\bk$-torus $T$ can then be taken to be the quotient of a Demazure model by its splitting group. 
The existence of Demazure models of algebraic $\bk$-tori was determined by Brylinski and further refined by Colliot-Th\'el\`ene, Harari and Skorobogatov.
Voskresenskii and Kunyavskii’s birational classifications of algebraic $\bk$-tori in dimensions 2 and 3 involved constructions of Demazure models of the  algebraic $\bk$-tori corresponding to maximal finite subgroups of 
$\GL(r,\Z)$ for $r=2,3$. We discuss some explicit constructions of Demazure models of algebraic $\bk$-tori, making connections with the 
defining integral representations of their splitting groups.  The constructions determine smooth projective Demazure models of the algebraic tori 
corresponding to maximal finite subgroups of $\GL(4,\Z)$.  Since projective 
toric varieties are determined by lattice polytopes, our constructions focus on 
some highly symmetric families of  polytopes, such as root polytopes and central transportation polytopes.   \end{abstract}

\maketitle

\section{Introduction}

For a field $\bk$, an algebraic $\bk$-torus $T$ of dimension $r$ is a $\bk$-form of a split $\bk$-torus.  That is, $T$ is an algebraic $\bk$-group which is isomorphic to a split torus after base change to the separable closure of $\bk$. 
Algebraic $\bk$-tori play a significant role in the study of algebraic $\bk$-groups over non-closed fields. Algebraic $\bk$-tori are classified up to isomorphism by integral representations of the absolute Galois group over $\bk$.
An algebraic $\bk$-torus of dimension $r$ split by a finite Galois extension with Galois group $G$ is classified up to isomorphism by its character lattice, a rank $r$ $G$-lattice, or equivalently by the
conjugacy class of $G$ as a finite subgroup of $\GL_r(\Z)$. The affine ring of an algebraic torus $T$ split by a Galois extension $L/\bk$ with Galois group G is given by the multiplicative invariant ring over $L$ of its character lattice $\widehat{T_L}$ as a $G$-lattice.

Many birational invariants for algebraic $\bk$-tori are defined in terms of a smooth projective model for the algebraic $\bk$-torus, $T$: that is, a smooth projective variety $X$ containing $T$ as a dense open set.
In principal, an algebraic $\bk$-torus over a field $\bk$ always has a smooth toric projective model,~\cite{Bry79,CTHS05}
 but descriptions might not be so practical.

Given a algebraic $\bk$-torus $T$ with Galois splitting extension $L/\bk$, and corresponding Galois splitting group $G=\Gal(L/\Bbbk)$, and a smooth projective model $X$ of $T$, Voskresenskii showed that the $G$-module structure of the Picard group of $X_L$,  $\Pic(X_L)$, and its cohomological properties reflected birational properties of $T$.  In particular, he showed that $\Pic(X_L)$ is a $G$-flasque lattice: $G$-lattices whose Tate cohomological subgroups $\hat{H}^{-1}(H,\Pic(X_L))$ vanish for all subgroups $H$ of $G$.  

He showed that a $G$-flasque resolution of the character lattice of $T_L$, had $G$-flasque lattice $\Pic(X_L)$ and that the $G$-stable permutation class of $\Pic(X_L)$ was independent of the choice of smooth projective model $X$.

This set-up allowed Voskresenskii to show that such an algebraic $\bk$-torus with Galois splitting field $L/\bk$ and Galois splitting group $G=\Gal(L/\Bbbk)$ is stably $\bk$-rational (that is, $T\times \Gmk^s$ is rational
for some $s\ge 0$) if and only if the Picard group $\Pic(X_L)$ is stably permutation as a $G$-lattice.~\cite{Vos70,Vos74,Vos98}

Later, Saltman showed that such an algebraic $\bk$-torus $T$ is  retract rational~\cite{Sa84b}
(that is, if the identity map on $T$ factors rationally through some projective space) if and only if the Picard group $\Pic(X_L)$ is  a direct summand of a $G$-permutation lattice.

Voskresenskii~\cite{Vos83}  produced a method to construct a smooth projective model of an algebraic $\bk$-torus $T$ with Galois splitting field $L/\bk$ and Galois splitting group $G=\Gal(L/\Bbbk)$ as a quotient of a smooth projective toric variety, harnessing  Demazure's seminal work on smooth toric varieties~\cite{Dem70}.  The construction determines a smooth complete fan in the vector space dual to the character lattice: $\widehat{T}^0_L\otimes \Q$ such that $G$ permutes the action of the cones of the fan.  Then $T_L$ is embedded
$G$-equivariantly as a dense open subset of $X_{\Sigma}$ and a smooth projective model $X$ of $T$ can be taken to be a quotient of $X_{\Sigma}$ under the action of $G$.
Then since $X_L=X_{\Sigma}$, the flasque resolution of $X_{\Sigma}$ can be determined as its divisor class group sequence:
$$0\to \widehat{T_L}\stackrel{\mathrm{div}}{\to} \Div_{T_L}(X_{\Sigma})\to \Pic(X_{\Sigma})\to 0$$
A smooth projective toric $T_L-$variety $X_{\Sigma}$ such that the natural action of $G$ on $T_L$ extends to $X_{\Sigma}$ is called a Demazure model of $T_L$.

An algorithm to construct such a Demazure model for an algebraic $\bk$-torus was determined by Brylinski~\cite{Bry79}, and then refined by Colliot-Th\'el{\`e}ne, Harari and Skorobogatov~\cite{CTHS05} due to some technical issues.
Voskresenskii proved that all algebraic $\bk$-tori of dimension 2 were rational by analysis of the Demazure models of the 2 algebraic tori corresponding to maximal subgroups of $\GL(2,\Z)$~\cite{Vos67}.

Kunyavskii's birational classification of algebraic $\bk$-tori in dimension 3 hinged on constructions of the Demazure models of the 4 algebraic tori corresponding to maximal finite subgroups of $\GL_3(\Z)$~\cite{Kun87}.

Not long after Voskresenskii's original construction of a flasque resolution for an algebraic torus in terms of the Picard group of a smooth projective model,
Colliot-Th\'el{\`e}ne and Sansuc~\cite{CTS77} determined a representation-theoretic technique of producing a flasque resolution for an algebraic $\bk$-torus in terms of its flasque resolution, as background for their determination of the R-equivalence classes of an algebraic torus.    Flasque and coflasque resolutions of algebraic $\bk$-tori (and their generalization to algebraic $\bk$-groups~\cite{CT08}) have had many subsequent important applications. In particular, Hoshi and Yamasaki~\cite{HY17}, used  the representation-theoretic  techniques of Colliot-Th\'el{\`e}ne and Sansuc, together with clever GAP algorithms, to determine the stably rational and retract rational algebraic $\bk$-tori of dimension 4 and 5.

In this paper, we  determine a Demazure model and resulting flasque resolution for the 9 algebraic tori corresponding to maximal finite subgroups of $\GL_4(\Z)$.
The  toric variety of a root system is a smooth projective  toric variety whose fan has maximal cones corresponding to the Weyl chambers of a root system.  That the
toric variety of a root system can be used as a Demazure model for an algebraic $\bk$-torus corresponding to the root lattice with the action of the automorphism group of the root system is mentioned in the literature, eg.~\cite{KV84}.

We prove that all but 2 of the algebraic tori corresponding to the maximal finite subgroups of $\GL_4(\Z)$ have Demazure models which can be constructed as toric varieties of a root system, their products or products of previously known Demazure models.

We determine equivariant projective simplicial models for another natural infinite family of algebraic tori, which correspond to maximal finite subgroups of $\GL(n,\Z)$: those with character lattice determined by the weight lattice of $A_n$, $\Lambda(A_n)$, equipped with the action of the automorphism group of $A_n$: $\Aut(A_n)$. For subgroups $G$ of $W(A_n)$, the algebraic torus corresponding to $(\Lambda(A_n),G)$ is a norm-1 torus.

A projective toric $T_L$-model for an algebraic $\bk$-torus $T$ with character lattice over a splitting field $L$ given by $(\widehat{T_L},\Gal(L/\Bbbk))=(\Lambda(A_n),\Aut(A_n))$ can be determined as the projective toric variety $X_{\Sigma}$ corresponding to  the fan $\Sigma$ which consists of cones over faces of the root polytope $P(A_n)$, the convex hull of the roots of $A_n$.  Although $X_{\Sigma}$ is not smooth, much is known about the structure of the root polytope $P(A_n)$.  Work of Vinberg~\cite{Vin90}; Cellini, Marietti~\cite{CM14,CM15}; and Ardila~\cite{ABHPS11}, among others determined the structure of this polytope, such as the structure of its faces, and (non-equivariant) triangulations, in terms of representation-theoretic connections to the extended Weyl group of $A_n$.  We constructed a simplicial equivariant projective toric model for the algebraic tori corresponding to $(\Lambda(A_n),\Aut(A_n))$.  Note that $(\Lambda(A_n),\Aut(A_n))$ corresponds to a maximal subgroup of $\GL(n,\Z)$.  In the cases $n=3$ and $n=4$, we construct a smooth Demazure model.  
The construction is explicit: the vertices, and simplices have an interesting combinatorial construction.  The construction is related to the unimodular triangulation of the root polytope for $A_n$~\cite{ABHPS11,CM15}.
The construction for $n=3$ is related to  Kunyavskii's Demazure model in that case.

Klyachko~\cite{Kly83} determined Demazure models $X_{mn}$ for a special class of algebraic tori corresponding to the tensor product of two root lattices of type $A$:
$(\Z A_{m-1}\otimes \Z A_{n-1},S_m\times S_n)$, assuming that $m$ and $n$ are relatively prime.  The toric varieties $X_{mn}$ are constructed as the toric variety of the central transportation polytopes $\Gamma_{mn}$.  These models were further studied in joint work with Voskresenskii~\cite{KV84}.
Among other things, they showed that the  algebraic tori corresponding to 
$(\Z A_{m-1}\otimes \Z A_{n-1},S_m\times S_n)$, for relatively prime $m,n$ are stably rational, and that if $n\equiv 1\bmod m$, are  in fact, rational.
Klyachko's varieties provide alternative Demazure models for the algebraic tori corresponding to $(\Z A_{2r},\Aut(A_{2r}))=(\Z A_{2r}\otimes \Z A_1, S_{2r+1}\times S_2)$.

It turns out that the remaining maximal subgroup of $\GL(4,\Z)$ corresponds to a lattice $(\Z A_2\otimes \Z A_2,(S_3\times S_3)\rtimes C_2\times C_2)$ which extends the action on $(\Z A_2)^{\otimes 2}$ from Klyachko's case.  We construct a Demazure model for the corresponding algebraic torus. The construction has some relation with the underlying Klyachko variety $X_{33}$. We remark that Jason Palombaro, in his thesis,~\cite{Pal25}, determined an independent argument for the same construction as part of ongoing joint work~\cite{LP}.

For each Demazure model of an algebraic torus that we discuss, we determine the associated flasque resolution.  In the final section, we compare the flasque resolutions obtained from representation-theoretic techniques initiated by Colliot-Th\'el\`ene and Sansuc to those determined from the divisor class sequence of a Demazure model. The flasque resolution determined from the Demazure model of an algebraic torus often beautifully reflects the geometry and the representation theory of the character lattice as a module for its splitting group. However, it is sometimes true that the resulting flasque resolutions are not at all minimal.  We show that representation-theoretic arguments can produce smaller flasque resolutions. 

The interest in these explicit Demazure models stems from connections to some outstanding cases of stably rational algebraic tori in dimension 4, whose rationality is unknown.  So far, most proofs of rationality for algebraic tori have used the construction of the Demazure model.  In particular, for $\Alt_5$, the alternating group on 5 letters, the algebraic tori with character lattices $(\Lambda(A_4),H)$ where $H=\Alt_5$ or $\Alt_5\times C_2$,  have been shown to be stably rational, but their rationality is unknown~\cite{HY17,Lem17}.  A further study of the cohomological invariants, Chow groups,  and K-theory of these varieties would be feasible with these explicit constructions and may shed more light on these  questions.  In addition, since flasque resolutions of algebraic tori are an important tool, these explicit constructions should have further applications.

This paper is organised as follows. The preliminary material sets up notation and reviews definitions and basic results.  It is set up to be self-contained, but so that an expert can select the material to skip. In Section 2, notation, definitions and basic results are reviewed about polytopes  and toric varieties for split $\bk$-tori.  In Section 3, algebraic $\bk$-tori, their toric models, their Demazure models, and their flasque resolutions are discussed.   In Section 4, crystallographic root systems and their associated affine root systems are discussed, to set the scene for the discussion of root polytopes and the toric variety of a root system.  In Section 5, we describe the Dade groups, maximal subgroups of $\GL(k,\Z)$ for $k=2,3,4$ in terms of root system data.
In Section 6, we explain how the toric variety of a root system can be used to determine Demazure models for certain algebraic tori, and determine the associated flasque resolution.  We determine  Demazure models for all but 2 of the algebraic tori corresponding to Dade groups in $\GL(4,\Z)$.  In Section 7, we construct a simplicial $\Aut(A_n)$-invariant projective toric model for the algebraic torus corresponding to the weight lattice of $A_n$ with the action of the automorphism group of $A_n$. We relate the corresponding fan to the unimodular triangulation of the root polytope of $A_n$. Then we construct a Demazure model for the algebraic torus corresponding to $(\Lambda(A_4),\Aut(A_4))$.
In Section 8, we compare our constructions with the Demazure models determined by Kunyavskii.
In Section 9, we discuss the Demazure model corresponding to the toric variety of the $m\times n$ central transportation polytope, following work of Klyachko and Voskresenskii.  We discuss the related construction of a Demazure model for the remaining Dade group of $\GL(4,\Z)$.
In Section 10, we compare the construction of  flasque resolutions of algebraic $\bk$-tori using the divisor class sequence of a Demazure model versus the representation-theoretic technique initiated by Colliot-Th\'el{\`e}ne and Sansuc.

\section*{Acknowledgements}
The majority of the research for this paper was undertaken at the Max Planck Institute for Mathematics in Bonn.  The author wishes to thank MPIM Bonn for the wonderful research conditions there, the hospitality and support.  The last sections of the paper, including the last Demazure model, and much of the editing, were completed at Laboratoire de Combinatoire et d'informatique, at Universit\'e de Qu\'ebec \`a Montr\'eal.  The author would like to thank the members of LaCiM for their warm hospitality and support.  

\section{Preliminaries}

In this section, we fix notation and recall the known definitions and results that we use most frequently in the paper.

\subsection{Basic Notation}

Let $\bk$ be a field.
A $\bk$-variety is a separated integral scheme of finite type over $\bk$.
An algebraic $\bk$-group is a smooth affine group variety of finite type over $\bk$.
For integers $a\le b$, the interval $[a,b]$ will denote the set $\{n\in \Z:a\le n\le b\}$.  For $n\in \N$, the interval $[1,n]$ will be denoted by $[n]$.

For reference, see \cite[I,Section 1]{KKMSD73}, \cite[Section 1.1]{CLS11}.

A $\Z$-lattice M is a free abelian group of finite rank. The group algebra of $M$ over $\bk$ will be denoted as $\bk[M]$.  A semigroup $S\subset M$ using the addition of $M$ defines a semigroup algebra $\bk[S]$ as a $\bk$-subalgebra of $\bk[M]$.  $\bk[S]$ is a finitely $\bk$-algebra of $\bk[M]$ if $S$ is finitely generated as a semigroup.  The quotient field of $\bk[M]$ is $\bk(M)$.  The quotient field of $\bk[S]$ is $\bk(M)$ if $S$ generates $M$ as a group.  

A finitely generated semigroup $S\subseteq M$ which generates $M$ as a group is \emph{saturated} in $M$ if for $0\ne r\in \N$ and $\m\in M$, $rm\in S$ implies that $\m\in S$.
In this case, $\bk[S]$ is integrally closed in $\bk(S)=\bk(M)$ and so is a normal $\bk$-algebra.

\subsection{Rational Polyhedral Cones}

For further information, see, for example,~\cite[1.2]{CLS11}.

We will fix a non-degenerate bilinear pairing of $\Z$-lattices $M$ and $N$:
$$M\times N\to \Z, (m,n)\to \langle m,n\rangle$$

This determines a natural duality isomorphism $M^0=\Hom_{\Z}(M,\Z)\cong N$.

A subset $\sigma$ of $N_{\R}$ is a \emph{rational polyhedral cone} if $\sigma=\Cone(\x_1,\dots,\x_s)=\sum_{i=1}^s\R^+\x_i$ for some vectors $\x_1,\dots,\x_s\in N_{\Q}$.
If $\sigma$ is a rational polyhedral cone in $N_{\R}$ then 
$$\sigma^{\vee}=\{\m\in M_{\R}:\langle \m,\bu\rangle\ge 0, \bu\in \sigma\}$$
is a rational polyhedral cone in $M_{\R}$, called the \emph{dual cone} of $\sigma$. Note that $\sigma^{\vee\vee}=\sigma$.

A rational polyhedral cone $\sigma$ in $N_{\R}$ defines a  subspace $\sigma^{\perp}$ of $M_{\R}$
$$\sigma^{\perp}=\{\m\in M_{\R}:\langle \m,\bu\rangle=0, \bu\in \sigma\}.$$
The duality pairing between $M$ and $N$ induces a duality pairing between the lattices $\sigma^{\perp}\cap M$ and $N_{\sigma}=N/\langle N\cap \sigma\rangle$.

A non-zero vector $m\in M_{\Q}$ determines a hyperplane in $N_{\R}$ 
$$H_{\m}=\{\bu\in N_{\R}: \langle \m,\bu\rangle=0\}$$
and a positive half-space
$$H_{\m}^+=\{\bu\in N_{\R}: \langle \m,\bu\rangle \ge 0\}$$
$H_{\m}$ is a \emph{supporting hyperplane} of $\sigma\in N_{\R}$ if $\sigma\subseteq H_{\m}^+$.

A rational polyhedral cone can be alternatively expressed as a finite intersection of its supporting positive half-spaces.  In particular, if the dual cone 
 $\sigma^{\vee}=\Cone(\m_1,\dots,\m_s)$ for some $\m_i\in M_{\Q}$,
then $\sigma=\cap_{i=1}^sH_{\m_i}^+$.

A \emph{face} $\tau$ of a rational polyhedral cone $\sigma$ is $\tau=H_{\m}\cap \sigma$ for some $\m\in \sigma^{\vee}\cap M_{\Q}$.  $\tau$ is again a rational polyhedral cone.

A rational polyhedral cone $\sigma$ is \emph{strongly convex} if $\{0\}$ is a face of $\sigma$.

\subsection{Polytopes}
For further information, see, for example,~\cite[Section 2.2]{CLS11}.

Fix a non-degenerate bilinear pairing of $\Z$-lattices $M$ and $N$:
$$M\times N\to \Z, (\m,\n)\to \langle \m,\n\rangle$$

    A \emph{rational polytope} $P\subseteq M_{\R}$ is the convex hull of a finite set $S\subseteq M_{\Q}$.  That is,
    $$P=\Conv(S)=\{\sum_{\v\in S}\lambda_{\v}\v: \lambda_{\v}\in \R_+, \sum_{\v\in S}\lambda_{\v}=1\}$$

    The dimension of a rational polytope $P\subseteq M_{\R}$ is the dimension of the subspace of $M_{\R}$ spanned by $\{\v-\v':\v\in P\}$ for some $\v'\in P$.
    A rational polytope $P=\Cone(S)\subseteq M_{\R}$ is a \emph{lattice polytope} if $S\subseteq M$.
If $P=\Conv(S)$ is a lattice polytope, then so is $rP=\Conv(rS)$ for any integer $r\ge 2$.

    A nonzero vector $\bu\in N_{\Q}$ and $b\in \Q$ determine an affine hyperplane 
    $$H_{\bu,b}=\{\m\in M_{\R}:\langle \m,\bu\rangle =b\}$$
    and closed half-spaces:
    $$H_{\bu,b}^+=\{\m\in M_{\R}:\langle \m,\bu\rangle \ge b\}, H_{\bu,b}^-=\{\m\in M_{\R}: \langle \m,\bu\rangle \le b\}$$

A subset $Q\subseteq P$ is a face of $P$, $Q\preceq P$
if there exist $0\ne \bu\in N_{\Q}, b\in \Q$ with
$$Q=H_{\bu,b}\cap P, P\subseteq H^+_{\bu,b}$$
Then $H_{\bu,b}$ is a supporting affine hyperplane in this case.
Note that $Q$ is also a polytope, with $Q=\Conv(S\cap H_{\bu,b})$ if $P=\Conv(S)$.

Important faces are those of dimensions 0,1,$\dim(P)-1$, which are called respectively: vertices, edges and facets.

Rational polytopes can alternatively be described as a bounded finite intersection of closed half-spaces.  

A full-dimensional lattice polytope $P\subseteq M_{\R}$ has a canonical such description. Each facet $F$ of $P$  has a unique supporting affine hyperplane of the form $H_{\bu_F,-a_{F}}$ where $\bu_F\in N, a_F\in \Z$ and so
$$P=\bigcap_{F\mbox{\tiny{ facet of }} P}H_{\bu_F,-a_F}^+$$

A lattice polytope $P\subseteq M_{\R}$ is \emph{normal} if the lattice points of $P$ generate those of $rP$ for all $r\ge 1$, i.e. $rP\cap M=r(P\cap M), r\ge 1$.
An important result about a full-dimensional lattice polytope $P$ of dimension at least 2 is that one can always find a positive integer multiple which is normal.  More precisely, for all $r\ge \dim(P)-1$, $rP$ is a normal polytope. (See ~\cite[2.2.12]{CLS11}.)

We will often assume that our lattice polytope $P$ in $M_{\R}$ is full-dimensional and contains $\0$ in its interior.
In this case, we may define a dual polytope to $P$ in $N_{\R}$.
The polar dual polytope to $P$ is then
$$P^0=\{\bu\in N_{\R}: \langle \bu,\v\rangle \ge -1, \v\in P\}$$
Note that $P=(P^0)^0$ and that $P^0=\Conv\{\frac{\bu_F}{a_F}: F\mbox{ is a facet of }P\}$.
The $r$-dimensional faces of $P$ correspond bijectively to the $(n-r-1)$-dimensional faces of $P^0$ for $0\le r\le n-1$.

Suppose additionally that our full-dimensional lattice polytope $P$ in $M_{\R}$ satisfies $-P=P$.  Then $F$ is a facet of $P$ with supporting affine hyperplane $H_F=H_{\bu_F,-a_F}$ if and only if $-F$ is a face of $P$ with supporting affine hyperplane $H_{-F}=H_{-\bu_F,-a_F}$.
The polar dual polytope to $P$ can alternatively be expressed as
$$P^0=\{\bu\in N_{\R}: \langle \bu,\v\rangle \le 1, \v\in P\}$$

\subsection{Toric Varieties for a split torus}

Let $\bk$ be a field. There is a natural bijection between split $\bk$-tori $T\cong \Gmk^r$ of dimension $r$ and $\Z$-lattices of rank $r$.  Given a split $\bk$-torus $T$ of rank $r$, its character lattice $M=\widehat{T}=\Hom_{\kgp}(T,\Gmk)=\{\chi_m:m\in M\}$
is a rank $r$ $\Z$-lattice.  Conversely, given a rank $r$ $\Z$-lattice $M$, $T=\Spec(\bk[M])$ is a split $\bk$-torus, where $\bk[M]$ indicates the $\bk$-group algebra of $M$.

Let $T$ be an algebraic $\bk$-torus, not necessarily split.

A \emph{toric $T$-variety} is a normal $\bk$-variety $X$ on which $T$ acts faithfully, together with a dense open $T$-orbit $U$.

A \emph{toric $T$-model} is a normal $\bk$-variety $X$ on which $T$ acts, together with an dense open $T$-orbit $U$ which is isomorphic to $T$.

Note that a toric $T$-model $X$ is equipped with a $T$-equivariant open embedding $T\cong U\subseteq X$ which determines the $T$-action on $X$.
Since the action of $T$ on itself is faithful, so is the action of $T$ on $X$.
$X$ is geometrically irreducible as a $\bk$-variety since $U_{\bk_s}\cong \Gmks^r$.

In Section~\ref{section:toricmodels}, we will discuss these definitions for an arbitrary algebraic $\bk$-torus, and explain how they fit in with other definitions in the literature.
For now, note that if $T$ is split, note that the definitions of toric $T$-varieties and toric $T$-models are the same.  In particular, if $\bk$ is algebraically closed, all algebraic $\bk$-tori are split. 
This essentially means that for the theory of toric $T$-varieties where $T$ is a split $\bk$-torus, one can follow the results of the (extensive) literature for toric varieties over an algebraically closed field.
We will make references to \cite{KKMSD73} and \cite{CLS11}.

We recall the characterization of normal affine toric $T$-varieties for a split $\bk$-torus $T$.

\begin{thm}~\cite[I,\S 1,Theorem 1]{KKMSD73} Let $M$ be a $\Z$-lattice of finite rank and let $T=\Spec(\bk[M])$.
  The correspondence $S\to \Spec(\bk[S])$ defines a bijection between the set of finitely generated semigroups $S\subseteq M$ which are saturated in $M$ and which generate $M$ as a group and the set of isomorphism classes of normal affine toric $T$-varieties.
\end{thm}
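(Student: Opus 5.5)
We prove the two directions separately: first that $S\mapsto \Spec(\bk[S])$ lands in normal affine toric $T$-varieties, then that every such variety arises this way, and finally that the map is injective on isomorphism classes.

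\emph{Forward direction.} Given a finitely generated semigroup $S\subseteq M$ that is saturated and generates $M$, the algebra $\bk[S]$ is a finitely generated $\bk$-algebra. It is a domain because it sits inside $\bk[M]$. As recalled in the basic notation, saturation makes $\bk[S]$ integrally closed in $\bk(M)$, so $X_S=\Spec(\bk[S])$ is a normal affine variety.

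The $M$-grading of $\bk[S]$ is a coaction $\bk[S]\to \bk[S]\otimes\bk[M]$, $\chi^m\mapsto \chi^m\otimes\chi^m$. This gives an action of $T$ on $X_S$. Choose generators $m_1,\dots,m_k$ of $S$. Since $S$ generates $M$, the localization $\bk[S][(\chi^{m_1}\cdots\chi^{m_k})^{-1}]$ equals $\bk[M]$. Hence the inclusion $\bk[S]\subseteq\bk[M]$ is a principal open immersion $T\hookrightarrow X_S$, and it is $T$-equivariant. So $T$ is a dense open orbit, and the action is faithful because it is faithful on $T$.

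\emph{Converse.} Let $X$ be a normal affine toric $T$-variety with dense open orbit $U$. Since $T$ is split and $U$ is an orbit with a rational point, we identify $U\cong T$ equivariantly by choosing a base point in $U(\bk)$. In the algebraically closed setting this is automatic; over a general $\bk$ I would work over $\bk_s$ and descend, but I expect this step to be routine.

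Restriction gives an injection $A=\bk[X]\hookrightarrow\bk[T]=\bk[M]$ that is compatible with the $T$-action. A $T$-stable subspace of $\bk[M]$ is $M$-graded, because $T$ is diagonalizable and the characters $\chi^m$ are the weight vectors. So $A=\bk[S]$ with $S=\{m:\chi^m\in A\}$, and $S$ is a semigroup because $A$ is a subring.

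We then check the three conditions on $S$:
\begin{itemize}
\item $S$ generates $M$ because $\bk(X)=\bk(T)=\bk(M)$.
\item $S$ is finitely generated because the homogeneous generators of the finitely generated graded algebra $A$ may be taken to be monomials.
\item $S$ is saturated by normality: if $rm\in S$, then $\chi^m\in\bk(M)$ is a root of $x^r-\chi^{rm}$, so it is integral over $A$ and therefore lies in $A$.
\end{itemize}

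\emph{Injectivity.} Isomorphisms of toric $T$-varieties are $T$-equivariant and respect the embedded torus. An equivariant isomorphism $\bk[S]\cong\bk[S']$ compatible with the embeddings into $\bk[M]$ forces $S=S'$. If we allow twisting by a translation of $T$, this rescales monomials without changing the support $S$.

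The main obstacle is the converse, specifically two points: the identification of a $T$-stable subalgebra with an $M$-graded (monomial) subalgebra, and pinning down the $T$-equivariant identification $U\cong T$ so that the semigroup $S$ is well defined. For the first I would use the weight-space decomposition of rational $T$-modules, which holds because $T$ is a split torus.
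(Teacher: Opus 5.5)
Your proof is correct and is essentially the standard argument of \cite{KKMSD73}, which the paper only cites and does not reprove: $\bk[X]$ is a $T$-stable, hence $M$-graded, subalgebra of $\bk[M]$, and the finite generation, the equality $\Z S=M$ and the saturation of $S$ come from finite generation of $\bk[X]$, equality of function fields and normality, as you describe. The one step you leave vague, the equivariant identification $U\cong T$ over a non-closed $\bk$, is supplied by the paper's remark in Section~\ref{section:toricmodels}: since $T$ is commutative, all points of $U$ share one stabilizer, which therefore acts trivially on the dense set $U$ and hence on $X$, so (scheme-theoretic) faithfulness makes $U$ a $T$-torsor, and the descent you allude to is Hilbert~90, $H^1(\bk,\Gmk^r)=0$, which gives a base point in $U(\bk)$.
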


For a split $\bk$-torus $T$, the geometric properties of toric $T$-varieties
are more easily described in terms of the cocharacter lattice $N=\Thatdual$ of the split $\bk$-torus: more precisely in terms of rational polyhedral cones in $N_{\R}=\Thatdual_{\R}$.

Let $T$ be a split $\bk$-torus $T$ with character lattice $M=\hat{T}=\Hom(T,\Gmk)=\{\chi^{\m}:\m\in M\}$ and cocharacter lattice $N=\Hom(\Gmk,T)=\{\lambda_{\n}:\n\in N\}$. 
Then there is a non-degenerate bilinear form  $M\times N\to \Z,(\m,\n)\to \langle \m,\n\rangle$ such that $\chi^{\m}\circ \lambda^{\n}(t)=t^{\langle \m,\n\rangle}$.
Conversely, given a $\Z$-lattice $M$ and its $\Z$-dual $N=M^{0}=\Hom(M,\Z)$, the pairing $M\times N\to \Z, (\m,f)\to f(\m)$ determines a non-degenerate bilinear form.  Then the split $\bk$-torus $T=\Spec(\bk[M])$ has character lattice $M$ and cocharacter lattice $N$.

 There is a bijection between the set of strongly convex rational polyhedral cones $\sigma\in N_{\R}$ and the set of finitely generated semigroups $S\subseteq M$ which are saturated in $M$ and generate $M$ as a group. This bijection  is 
determined by the map $\sigma\to \sigma^{\vee}\cap M$. It provides an update to the last theorem.

\begin{thm}~\cite[I,\S 1,Theorem 1']{KKMSD73}
  Let $T$ be a split $\bk$-torus.
  
  The correspondence $\sigma\to \Spec (\bk[\sigma^{\vee}\cap \widehat{T}])\equiv U_{\sigma}$ defines a bijection between the set of strongly convex rational polyhedral cones $\sigma\subseteq \Thatdual_{R}$ and the set of isomorphism classes of normal affine toric $T$-varieties.
\end{thm}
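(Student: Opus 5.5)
The plan is to deduce the statement from the previous theorem (KKMSD, Theorem 1) by composing it with the bijection $\sigma\mapsto \sigma^{\vee}\cap M$ recalled just before the statement, where $M=\widehat{T}$ and $N=\Thatdual$. That is, I will show that $\sigma\mapsto S_\sigma:=\sigma^{\vee}\cap M$ is a bijection from strongly convex rational polyhedral cones in $N_{\R}$ onto finitely generated, saturated semigroups generating $M$. Then $\sigma\mapsto U_\sigma=\Spec(\bk[S_\sigma])$ is the composite of two bijections, and so is itself a bijection onto isomorphism classes of normal affine toric $T$-varieties.

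I would carry out the steps in this order.

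First, well-definedness. Since $\sigma$ is rational polyhedral, $\sigma^{\vee}$ is rational polyhedral. Gordan's lemma then gives that $S_\sigma$ is finitely generated: one takes the lattice points of the bounded zonotope $\{\sum t_i\m_i: 0\le t_i\le 1\}$ together with integral generators $\m_i$ of $\sigma^\vee$.

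Saturation is immediate. If $r\m\in\sigma^\vee$ with $r>0$, then $\m\in\sigma^\vee$, because $\sigma^\vee$ is a cone.

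Next, $S_\sigma$ generates $M$. Strong convexity of $\sigma$ means $\sigma\cap(-\sigma)=0$, which is equivalent to $\sigma^\vee$ being full-dimensional. A full-dimensional rational cone contains a lattice point $\m_0$ in its interior. For any $\m\in M$, $\m+k\m_0\in\sigma^\vee$ for $k\gg0$, so $\m=(\m+k\m_0)-k\m_0$ is a difference of elements of $S_\sigma$.

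Second, injectivity. The rational cone $\sigma^\vee$ is the real cone generated by $S_\sigma$, because it is generated by rational vectors, which can be scaled to be integral. Hence $\sigma=(\sigma^\vee)^\vee$ is recovered from $S_\sigma$ by the duality $\sigma^{\vee\vee}=\sigma$ recalled earlier.

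Third, surjectivity. Let $S$ be finitely generated, saturated and generating $M$. Put $\tau=\Cone(S)\subseteq M_{\R}$ and $\sigma=\tau^\vee$. Then $\sigma$ is rational polyhedral and $\sigma^\vee=\tau$. Since $S$ generates $M$, $\tau$ spans $M_{\R}$, so $\sigma$ is strongly convex. It remains to show $\tau\cap M=S$. Any $\m\in\tau\cap M$ is a nonnegative rational combination of the generators of $S$. Clearing denominators gives $r\m\in S$ for some $r>0$, and saturation gives $\m\in S$.

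The main obstacle is this last step: that a lattice point in the rational cone spanned by $S$ has a positive multiple lying in $S$. The issue is that a priori $\m$ is only a real nonnegative combination of the generators. I would handle it by invoking Carathéodory's theorem to write $\m$ using linearly independent generators. The coefficients are then uniquely determined by solving a rational linear system, so they are rational and nonnegative. This yields the desired $r\m\in S$.
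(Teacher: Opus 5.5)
Your proposal is correct and matches the paper's route: the paper gives no separate proof, citing KKMSD and obtaining the statement by composing the previous theorem with the bijection $\sigma\mapsto\sigma^{\vee}\cap\widehat{T}$ between strongly convex rational polyhedral cones and finitely generated saturated semigroups that generate $\widehat{T}$. You go further than the paper by verifying that bijection yourself, via Gordan's lemma, full-dimensionality of $\sigma^{\vee}$, double duality, and Carath\'eodory plus saturation for surjectivity, and those verifications are sound.
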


Geometric properties of the normal affine toric $T$-varieties $U_{\sigma}$ can be described in terms of the strongly convex rational polyhedral cones $\sigma\subseteq N_{\R}$ where $N=\Thatdual$.

A strongly convex rational polyhedral cone $\sigma\subseteq N_{\R}$ is called \emph{simplicial} if $\sigma\cap N$ can be generated by a linearly independent subset of $N$.
It is called \emph{smooth} if $\sigma\cap N$ can be generated by a subset of a $\Z$-basis of $N$.

\begin{prop}\cite[1.3.12,1.3.20]{CLS11},\cite[I,\S 1,Theorem 4;p.19]{KKMSD73}
  
  For a strongly convex rational polyhedral cone $\sigma\subseteq N_{\R}$,   $U_{\sigma}$ is an orbifold
  if and only if $\sigma$ is simplicial. $U_{\sigma}$ is smooth if and only if $\sigma$ is smooth.
\end{prop}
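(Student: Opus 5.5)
The plan is to make both equivalences local and explicit by computing the semigroup $S_\sigma=\sigma^{\vee}\cap M$ and the coordinate ring $\bk[S_\sigma]$.

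\textbf{Reduction to full dimension.} Let $N_\sigma'=\Span_{\R}(\sigma)\cap N$. This is a saturated sublattice, so it is a direct summand: $N=N_\sigma'\oplus N''$. Dually $M=M'\oplus M''$, where $M''=\sigma^{\perp}\cap M$. Then $\sigma^{\vee}\cap M=(\sigma'^{\vee}\cap M')\oplus M''$, where $\sigma'$ is $\sigma$ regarded as a full-dimensional cone in $N'_{\R}$. Hence
$$U_\sigma\cong U_{\sigma'}\times \Gm_m^{\,k}, \qquad k=\rk M''.$$
The torus factor is smooth, and a product with it is an orbifold, or smooth, exactly when the first factor is. Since simpliciality and smoothness of $\sigma$ are defined intrinsically in $N'$, we may assume $\sigma$ is full-dimensional, hence strongly convex with $\dim\sigma=r$.

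\textbf{Smooth case.} Suppose $\sigma=\Cone(\e_1,\dots,\e_r)$ for a $\Z$-basis of $N$. The dual cone $\sigma^{\vee}$ is generated by the dual basis, so $\bk[S_\sigma]=\bk[x_1,\dots,x_r]$ and $U_\sigma\cong\A^r$, which is smooth.

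Conversely, suppose $U_\sigma$ is smooth. Look at the torus-fixed point $x_\sigma$, which corresponds to the maximal ideal $\mathfrak m$ spanned by $\chi^{\m}$ with $\m\in S_\sigma\setminus\{0\}$. The cotangent space $\mathfrak m/\mathfrak m^2$ has as basis the irreducible elements of $S_\sigma$, i.e. its Hilbert basis. Smoothness forces the Hilbert basis to have exactly $r$ elements. These $r$ elements generate $S_\sigma$ and hence generate $M$ as a group, so they form a $\Z$-basis of $M$. Then $\sigma^{\vee}$ is a basis cone, and so is its dual $\sigma$.

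\textbf{Simplicial case.} If $\sigma$ is simplicial with primitive generators $\bu_1,\dots,\bu_r$, let $N'=\bigoplus\Z\bu_i\subseteq N$, a sublattice of finite index. Relative to $N'$ the cone $\sigma$ is smooth, so $U_{\sigma,N'}\cong\A^r$. The group $G=N/N'$ is finite, and $\bk[S_\sigma]$ is the ring of invariants of $\bk[\sigma^{\vee}\cap M']$ under the diagonalizable group dual to $G$, acting through the $M'/M$-grading. This exhibits $U_\sigma\cong\A^r/G$, which is an orbifold (taking $\mathrm{char}\,\bk\nmid|G|$ where needed, as in the cited references).

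For the converse, start from an orbifold structure and recover simpliciality. The intended route is the standard one: a quotient singularity $\A^r/H$ is $\Q$-factorial, so every torus-invariant Weil divisor $D_\rho$ is $\Q$-Cartier. On an affine toric variety, $\Q$-Cartier for all $D_\rho$ means there are $\m_\rho\in M_{\Q}$ with $\langle \m_\rho,\bu_{\rho'}\rangle=\delta_{\rho\rho'}$. Such $\m_\rho$ exist only if the ray generators $\bu_\rho$ are linearly independent, i.e. $\sigma$ is simplicial.

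\textbf{Main obstacle.} I expect the converse directions to be the hard part, especially the orbifold $\Rightarrow$ simplicial step, which needs $\Q$-factoriality of quotient singularities and the Cartier criterion for toric divisors. For these I would cite \cite[1.3.12, 1.3.20, 4.2.7]{CLS11} rather than rederive them.
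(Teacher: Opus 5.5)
Your proposal is correct, and it is essentially the standard argument from the sources the paper cites; the paper gives no proof of its own beyond the references to CLS and KKMS. The smooth case uses the Hilbert-basis computation of the cotangent space at the torus-fixed point, the simplicial case uses the presentation $U_\sigma\cong \A^r/(N/N')$, and the converse uses $\Q$-factoriality together with the toric $\Q$-Cartier criterion. One point to tidy: the ``only if'' half of your claim that $U_{\sigma'}\times\Gm_m^{k}$ is an orbifold exactly when $U_{\sigma'}$ is, is not obvious in general. You never need it, though, since your $\Q$-Cartier argument applies verbatim to $U_\sigma$ without first reducing to full dimension.
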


\begin{prop}~\cite[I,\S 1, Theorem 3]{KKMSD73},\cite[p.106-7]{CLS11}
  Let $\sigma$ be a strongly convex rational polyhedral cone and $\tau=\sigma\cap H_{\m}$ a face corresponding to $\m\in \sigma^{\vee}\cap M$, then $$U_{\tau}=\Spec(\bk[\sigma^{\vee}\cap M]_{\chi^{\m}})=(U_{\sigma})_{\chi^{\m}}$$
  is a basic open subset of $U_{\sigma}$ determined by the character $\chi^{\m}$.
  This determines a unique $T$-equivariant embedding of $U_{\tau}$ into $U_{\sigma}$.

In particular, if $\sigma_1,\sigma_2$ are two strongly convex rational polyhedral cones in $N_{\R}$, then for $\tau=\sigma_1\cap \sigma_2$, there exists a common supporting hyperplane, that is, there exists $\m\in (\sigma_1^{\vee}\cap (-\sigma_2)^{\vee})\cap M$ such that
$\sigma_1\cap H_{\m}=\tau=\sigma_2\cap H_{\m}$.  This produces a gluing isomorphism
$g_{\sigma_2,\sigma_1}:(U_{\sigma_1})_{\chi^{\m}}\cong (U_{\sigma_2})_{\chi^{-\m}}$ which is the identity on $U_{\tau}$.
\end{prop}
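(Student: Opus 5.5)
We need to prove the last proposition: for a face $\tau=\sigma\cap H_{\m}$ with $\m\in\sigma^\vee\cap M$, we have $\tau^\vee\cap M=(\sigma^\vee\cap M)+\Z_{\ge0}(-\m)$. This gives $\bk[\tau^\vee\cap M]=\bk[\sigma^\vee\cap M]_{\chi^{\m}}$. We also need the separation statement for two cones meeting in a common face, and the resulting gluing.

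\textbf{Step 1: the semigroup identity.} The inclusion $\supseteq$ is immediate. Indeed, $\sigma^\vee\subseteq\tau^\vee$ because $\tau\subseteq\sigma$. Moreover $-\m\in\tau^\vee$ since $\langle\m,\bu\rangle=0$ for $\bu\in\tau$.

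For $\subseteq$, take $\m'\in\tau^\vee\cap M$. Write $\sigma=\Cone(\x_1,\dots,\x_s)$ with the $\x_i$ in $N$. Split the generators into those $\x_i\in\tau$, where $\langle\m,\x_i\rangle=0$, and those $\x_i\notin\tau$, where $\langle\m,\x_i\rangle>0$. Here we use that $\tau=\Cone(\x_i:\x_i\in H_{\m})$.

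Choose $k\in\N$ with $k\langle\m,\x_i\rangle\ge -\langle\m',\x_i\rangle$ for every $\x_i\notin\tau$. Such a $k$ exists because there are finitely many $\x_i$ and each relevant pairing with $\m$ is positive. Then $\m'+k\m$ pairs nonnegatively with every $\x_i$. So $\m'+k\m\in\sigma^\vee\cap M$, and $\m'=(\m'+k\m)+k(-\m)$.

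Translating to algebras, $\chi^{\m'}=\chi^{\m'+k\m}/(\chi^{\m})^k$. This gives the localization equality, and hence an open immersion $U_\tau\hookrightarrow U_\sigma$ onto the basic open set $(U_\sigma)_{\chi^{\m}}$. The immersion is $T$-equivariant because it is induced by an $M$-graded map, namely the inclusion of group-algebra subrings of $\bk[M]$ compatible with the comodule structure.

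Uniqueness holds because any $T$-equivariant embedding must restrict to the identity on the dense orbit $T$. A morphism of separated varieties agreeing on a dense open set is determined there.

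\textbf{Step 2: the separation statement.} This is the main obstacle. I would argue by convex duality.

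Consider the cone $\sigma_1-\sigma_2=\sigma_1+(-\sigma_2)$, which is rational polyhedral. Its dual is $\sigma_1^\vee\cap(-\sigma_2)^\vee$. Choose $\m$ in the relative interior of this dual cone, scaled to lie in $M$; this is possible because the cone is rational.

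It remains to check that $\sigma_1\cap H_{\m}=\tau$. The inclusion $\supseteq$ holds because $\m\ge0$ on $\sigma_1$ and $\m\le0$ on $\sigma_2$, so $\m$ vanishes on $\tau=\sigma_1\cap\sigma_2$.

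For the converse, the key lemma is that, since $\tau$ is a face of both cones, $(\sigma_1-\sigma_2)\cap-(\sigma_1-\sigma_2)$ equals $\tau-\tau$. I would prove this lemma as follows. Suppose $\bu_1-\bu_2=-(\bu_1'-\bu_2')$. Then $\bu_1+\bu_1'=\bu_2+\bu_2'$ lies in $\sigma_1\cap\sigma_2=\tau$. Since $\tau$ is a face of $\sigma_1$, we get $\bu_1,\bu_1'\in\tau$, and similarly $\bu_2,\bu_2'\in\tau$.

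A relative-interior element $\m$ of the dual cone vanishes exactly on the lineality space $\tau-\tau$ of $\sigma_1-\sigma_2$. Hence if $\bu\in\sigma_1$ and $\langle\m,\bu\rangle=0$, then $\bu\in\tau-\tau$. Writing $\bu+\bu_2=\bu_2'$ with $\bu_2,\bu_2'\in\tau$ shows that $\bu+\bu_2\in\tau$. The face property then forces $\bu\in\tau$. The symmetric argument gives $\sigma_2\cap H_{\m}=\tau$.

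\textbf{Step 3: the gluing.} Apply Step 1 twice: to $\sigma_1$ with $\m$, and to $\sigma_2$ with $-\m\in\sigma_2^\vee$. This identifies both $(U_{\sigma_1})_{\chi^{\m}}$ and $(U_{\sigma_2})_{\chi^{-\m}}$ with $U_\tau$. Composing these identifications gives $g_{\sigma_2,\sigma_1}$, which is the identity on $U_\tau$ by construction.
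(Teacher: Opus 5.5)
Your proof is correct and is essentially the standard argument in the references the paper cites in place of a proof. That argument has two parts: the semigroup identity $\tau^\vee\cap M=(\sigma^\vee\cap M)+\Z_{\ge 0}(-\m)$, obtained by adding a large multiple of $\m$, and the separation lemma, obtained from a relative-interior point of $(\sigma_1-\sigma_2)^\vee$. In the second part you correctly supply the hypothesis that $\tau=\sigma_1\cap\sigma_2$ is a face of both cones, which is implicit in the paper's fan setting but missing from the statement as written.

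One small correction concerns uniqueness. A $T$-equivariant embedding need not restrict to the identity on $T$, since composing with translation by any $t_0\in T$ gives another one. Uniqueness should therefore be read as uniqueness among embeddings extending $\mathrm{id}_T$, and for those your density-plus-separatedness argument works.
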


To determine normal toric varieties, one needs the definition of a fan.
\begin{defn}~\cite[3.1.2]{CLS11}
  
  A fan $\Sigma$ in $N_{\R}$ is a finite collection of cones $\sigma\subseteq N_{\R}$ such that:
  \begin{itemize}
  \item Every $\sigma\in \Sigma$ is a strongly convex rational polyhedral cone.
  \item For all $\sigma\in \Sigma$, each face of $\sigma$ is in $\Sigma$.
  \item For all $\sigma_1,\sigma_2\in \Sigma$, the intersection $\sigma_1\cap \sigma_2$ is a face of each and so is in $\Sigma$.
\end{itemize}
\end{defn}

    The toric variety of a fan $\Sigma$ is then the algebraic $\bk$-variety defined by the quotient map $X_{\Sigma}=\sqcup_{\sigma\in \Sigma}U_{\sigma}/\sim$ where $u_1\sim u_2$ if and only if $u_1\in U_{\sigma_1}, u_2\in U_{\sigma_2}$, for some $\sigma_1,\sigma_2\in \Sigma$ with $u_2=g_{\sigma_2,\sigma_1}(u_1)$.
 That is, $X_{\Sigma}$ is determined by the normal affine toric varieties corresponding to the cones of the fan, glued according to the intersection data of the fan.

Properties of $\Sigma$ determine geometric properties of $X_{\Sigma}$.

\begin{thm}~\cite[3.1.19,11.4.8]{CLS11}

  Let $\Sigma\subseteq N_{\R}$ be a fan.
      \begin{itemize}
      \item $X_{\Sigma}$ is smooth if every cone $\sigma$ in $\Sigma$ is smooth.
      \item $X_{\Sigma}$ is an orbifold if every cone $\sigma$ in $\Sigma$ is simplicial.
      \item $X_{\Sigma}$ is compact in the classical topology if $\Sigma$ is complete, i.e. if the support of $\Sigma$, $|\Sigma|=\cup_{\sigma\in \Sigma}\sigma$ is $N_{\R}$.
\end{itemize}
      \end{thm}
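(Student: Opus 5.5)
The three claims are local-to-global statements, and I would reduce each to the behaviour of the affine charts $U_\sigma$, using the gluing description of $X_{\Sigma}$ and the preceding proposition on $U_\sigma$.

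\textbf{Smoothness and orbifold property.} By construction $X_{\Sigma}$ is the union of the open subsets $U_\sigma$ for $\sigma\in\Sigma$. The gluing maps $g_{\sigma_2,\sigma_1}$ are isomorphisms of basic open subsets, so each $U_\sigma$ is an open subvariety of $X_\Sigma$.

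Smoothness is local, so $X_\Sigma$ is smooth if and only if every $U_\sigma$ is smooth. By the proposition citing \cite[1.3.12,1.3.20]{CLS11}, $U_\sigma$ is smooth exactly when $\sigma$ is smooth, which gives the first bullet.

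The second bullet is identical once we note that having at most finite quotient singularities is also a local property. The same proposition gives that $U_\sigma$ is an orbifold when $\sigma$ is simplicial. The underlying reason is that a simplicial $\sigma$ spans a full-rank cone over a finite-index sublattice $N'\subseteq N$ in which $\sigma$ is smooth, so $U_\sigma\cong U_{\sigma,N'}/(N/N')$ with $U_{\sigma,N'}$ smooth.

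\textbf{Compactness of complete fans.} I would use the valuative/limit criterion through one-parameter subgroups. Work over $\bk=\mathbb{C}$ (or use properness over a general field and then the classical topology). It is standard, and uses only the affine charts, that $X_\Sigma$ is separated: the diagonal condition reduces to the fact that $\bk[\sigma_1^\vee\cap M]\otimes\bk[\sigma_2^\vee\cap M]\to\bk[(\sigma_1\cap\sigma_2)^\vee\cap M]$ is surjective. This surjectivity holds by the separation lemma already in the gluing proposition: there is $\m\in\sigma_1^\vee\cap(-\sigma_2)^\vee\cap M$ with $\sigma_1\cap H_\m=\sigma_2\cap H_\m$.

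For properness I would verify the valuative criterion. Take a DVR $R$ with fraction field $K$ and valuation $v$, and a $K$-point of $X_\Sigma$. It suffices to treat points of the dense torus $T$, a reduction I would justify by passing to the orbit closures $V(\tau)$, which are again toric varieties of complete fans. Such a point is a homomorphism $M\to K^*$; composing with $v$ gives $\n\in N$. Completeness of $\Sigma$ gives a cone $\sigma\ni\n$. Then $\langle\m,\n\rangle\ge0$ for all $\m\in\sigma^\vee\cap M$, so the point extends to $\Spec R\to U_\sigma$. Properness over $\mathbb{C}$ then implies compactness in the classical topology.

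\textbf{Main obstacle.} The main difficulty is this reduction from arbitrary $K$-points to torus points. An alternative that avoids it is to use sequences: every point lies in some orbit $O(\tau)\cong T/T_\tau$, and a limit argument along $\lambda^{\n}(t)$ as $t\to0$ handles each orbit. Compactness then follows because every sequence in $T$ has a subsequence whose valuations cluster in some cone $\sigma$ of the complete fan, so it converges in $U_\sigma$. I would simply cite \cite[3.1.19]{CLS11} for the technical convergence step.
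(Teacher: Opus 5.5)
Your proposal is correct. The paper gives no argument beyond citing \cite[3.1.19, 11.4.8]{CLS11}, so the comparison is with the proof in that source. For the first two bullets you follow it exactly: smoothness and finite quotient singularities are local properties, the charts $U_\sigma$ form an open cover, and the affine criteria finish the argument. One small correction to your orbifold aside: $\sigma$ need not be full-dimensional. Take $N'$ to be spanned by the ray generators of $\sigma$ together with lattice vectors that complete them to a $\Q$-basis of $N_\Q$.

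For the third bullet your main route is genuinely different. You prove the algebraic statement that $X_\Sigma$ is complete over $\bk$:
\begin{itemize}
\item separatedness comes from $\bk[\tau^\vee\cap M]=\bk[\sigma_1^\vee\cap M]_{\chi^{\m}}$ with $-\m\in\sigma_2^\vee\cap M$;
\item the valuative criterion comes from extending a $K$-point $\phi:M\to K^*$ of the torus to $\Spec R\to U_\sigma$, for any cone $\sigma$ containing $v\circ\phi$;
\item points in other orbits are handled through $V(\tau)$ and the completeness of the star fan.
\end{itemize}
You then need the comparison theorem that a complete complex variety is compact in the classical topology.

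The argument in \cite{CLS11} is instead directly topological, and it is essentially your fallback. A sequence is reduced to a single orbit and then, via $V(\tau)$, to the torus. Define $\ell(\gamma)\in N_\R$ by $\langle \m,\ell(\gamma)\rangle=-\log|\chi^{\m}(\gamma)|$. Completeness puts infinitely many $\ell(\gamma_k)$ in one cone $\sigma$. Those $\gamma_k$ lie in $\{x\in U_\sigma:|\chi^{\m}(x)|\le 1,\ \m\in\sigma^\vee\cap M\}$, which is closed and bounded in $U_\sigma\subseteq\mathbb{C}^s$, hence compact.

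So the ``technical convergence step'' you proposed to outsource is just this observation. Citing \cite[3.1.19]{CLS11} for it would be circular, since that is the statement being proved.

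What each route buys: your valuative argument gives properness over an arbitrary field, which is the form relevant to the non-closed fields in this paper, at the cost of the valuative criterion and a GAGA-type comparison. The topological argument is elementary and self-contained, but it only makes sense over $\mathbb{C}$.
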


\begin{thm}~\cite[3.2.6,3.2.7]{CLS11},\cite[I,\S 2,Theorem 6]{KKMSD73}
  
Let $T$ be a split $\bk$-torus.
      \begin{enumerate}
        \item The correspondence $\Sigma\to X_{\Sigma}$ defines a bijection between the fans of $\Thatdual_{\R}$ and the isomorphism classes of normal toric $T$-varieties. 
        \item The map $\sigma\to U_{\sigma}$ defines a bijection between the cones of a fan  $\Sigma$ on $\Thatdual_{\R}$ and the set of $T$-invariant open affine subsets of $X_{\Sigma}$.
          
        \item The  map which assigns to each $U_{\sigma}$ the unique $T$-orbit $\cO(\sigma)\cong \Spec(\bk[\sigma^{\perp}\cap \widehat{T}])$ which is closed in $U_{\sigma}$, defines a bijection between the cones of the fan $\Sigma$ and its $T$-orbits.
 \item Let $\tau,\sigma\in \Sigma$. $\tau$ is a face of $\sigma$ if and only if $\cO(\sigma)\subseteq \overline{\cO(\tau)}$.
\item Let $\sigma\in \Sigma$. $V(\sigma)=\overline{\cO(\sigma)}$ is a normal toric $\Spec(\bk[\sigma^{\perp}\cap \widehat{T}])$-variety of dimension $\rk(\widehat{T})-\dim(\sigma)$. 
      \end{enumerate}

    \end{thm}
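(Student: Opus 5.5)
The theorem has five parts. The plan is to build everything from the affine pieces already classified in \cite[I,\S 1,Theorem 1']{KKMSD73} and from the gluing proposition for common faces. For part (1), given a fan $\Sigma$, the variety $X_\Sigma$ glued from the $U_\sigma$ along the isomorphisms $g_{\sigma_2,\sigma_1}$ has the following properties:
\begin{itemize}
\item it is normal, since each $U_\sigma$ is normal;
\item it carries a $T$-action, since each $U_\sigma$ is $T$-stable and the gluing maps are $T$-equivariant, being the identity on $U_\tau$;
\item it contains the dense orbit $T=U_{\{0\}}$.
\end{itemize}
Separatedness should follow from the fact that $\sigma_1\cap\sigma_2$ is a common face cut out by a single $\m$. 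This gives $\bk[\sigma_1^\vee\cap M]+\bk[\sigma_2^\vee\cap M]=\bk[\tau^\vee\cap M]$, which is exactly the condition that the diagonal $U_\tau\to U_{\sigma_1}\times U_{\sigma_2}$ is a closed embedding. Injectivity of $\Sigma\mapsto X_\Sigma$ will follow from part (2), since the fan is recovered from the $T$-invariant affine opens.

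For surjectivity in part (1), which I expect to be the main obstacle, I would invoke Sumihiro's theorem: a normal variety with a split torus action is covered by $T$-invariant affine opens. Each such open is a normal affine toric $T$-variety, so it equals $U_\sigma$ for a unique strongly convex $\sigma$. I then need to show that the collection of these cones, closed under faces, is a fan, and that intersections of the charts correspond to intersections of cones. The argument runs as follows:
\begin{itemize}
\item $U_{\sigma_1}\cap U_{\sigma_2}$ is affine (by separatedness) and $T$-invariant, hence equal to some $U_\tau$.
\item Comparing one-parameter subgroup limits shows $\tau=\sigma_1\cap\sigma_2$: for $\n\in N$, $\lim_{t\to0}\lambda_\n(t)$ exists in $U_\sigma$ iff $\n\in\sigma$.
\item Since $U_\tau\subseteq U_{\sigma_i}$ is an open $T$-invariant affine, $\tau$ is a face of each $\sigma_i$.
\end{itemize}
I would first check that the open affine subsets of $U_\sigma$ which are $T$-stable are precisely the basic opens $(U_\sigma)_{\chi^\m}$ with $\m\in\sigma^\vee\cap M$, equivalently the $U_\tau$ for faces $\tau$ of $\sigma$. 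This comes from the grading of invariant ideals by $M$, and the same fact gives part (2) directly.

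For part (3), I would compute in $U_\sigma$. The point $x_\sigma$ given by the semigroup homomorphism $\m\mapsto 1$ if $\m\in\sigma^\perp$ and $\m\mapsto 0$ otherwise is well defined because $\sigma^\perp\cap\sigma^\vee$ is a face of $\sigma^\vee$. Its orbit is the closed subvariety cut out by the ideal spanned by $\chi^\m$ with $\m\notin\sigma^\perp$, which is $\Spec\bk[\sigma^\perp\cap M]$. Every orbit lies in some $U_\sigma$, and $U_\sigma$ is the disjoint union of the $\cO(\tau)$ over faces $\tau\preceq\sigma$ (stratify by which characters vanish), so the correspondence is bijective.

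Part (4) follows from the same description: $\cO(\sigma)\subseteq\overline{\cO(\tau)}$ iff the ideal of $\overline{\cO(\tau)}\cap U_\sigma$ vanishes on $x_\sigma$, iff $\tau\preceq\sigma$. For part (5), I would show that $V(\sigma)$ is covered by the affines $\overline{\cO(\sigma)}\cap U_{\sigma'}=\Spec\bk[\sigma'^\vee\cap\sigma^\perp\cap M]$ for $\sigma\preceq\sigma'$. These are normal affine toric varieties for the torus $\Spec\bk[\sigma^\perp\cap M]$, whose cones are the images of $\sigma'$ in $N_\sigma\otimes\R$. The dimension is $\rk(\sigma^\perp\cap M)=\rk\widehat T-\dim\sigma$.
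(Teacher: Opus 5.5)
Your outline is correct. The paper gives no proof of its own here; it quotes the theorem from Cox--Little--Schenck and Kempf--Knudsen--Mumford--Saint-Donat, and your route is essentially the standard one in those sources: the separation lemma for separatedness, Sumihiro's theorem together with the description of $T$-stable affine opens of $U_\sigma$ for surjectivity, and the distinguished points $x_\sigma$ for the orbit--cone correspondence and orbit closures. The one slip is in the separatedness step, which should be the semigroup identity $(\sigma_1^\vee\cap M)+(\sigma_2^\vee\cap M)=\tau^\vee\cap M$, i.e.\ $\bk[\tau^\vee\cap M]$ is \emph{generated} by the two subalgebras (the multiplication map from their tensor product is surjective), not their vector-space sum.
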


    From the last part of the theorem, we note that $\Sigma(r)$, the set of $r$-dimensional cones of $\Sigma$ are in bijection with the $T$-invariant closed subvarieties of $X_{\Sigma}$ of codimension $r$.
    In particular, the $T$-invariant divisors of $X_{\Sigma}$ are precisely determined by the rays of the fan $\Sigma$.  Given a ray $\rho\in \Sigma(1)$, $D_{\rho}=\overline{\cO(\rho)}$ is the corresponding $T$-invariant divisor.
   Then $\Div_{T}(X_{\Sigma})=\oplus_{\rho\in \Sigma(1)}\Z D_{\rho}$ is the group of $T$-invariant Weil divisors on $X_{\Sigma}$.  For each ray $\rho\in \Sigma(1)$, the semigroup $\rho\cap N$ has a unique minimal generator $\bu_{\rho}$. A $T$-invariant principal divisor on $X_{\Sigma}$ must take the form
    $$\divT(\chi^{\m})=\sum_{\rho\in \Sigma(1)}\langle \m,\bu_{\rho}\rangle D_{\rho}$$ for some $\m\in M$.

   The divisor class group $\Cl(X_{\Sigma})$ of the  toric variety corresponding to a complete fan $\Sigma$, $X_{\Sigma}$,  can be determined in terms of the $T$-invariant divisors:

   \begin{prop}~\cite[4.1.2,4.1.3]{CLS11}
     
      For a complete fan $\Sigma$ on $N_{\R}$, there is a short exact sequence
      $$0\to M\stackrel{\mathrm{div}}{\to} \Div_{T_N}(X_{\Sigma})\to \Cl(X_{\Sigma})\to 0$$
      where the map $\divT: M\to \Div_{T}(X_{\Sigma}), \m\to \sum_{\rho\in \Sigma(1)}\langle \m,\bu_{\rho}\rangle D_{\rho}$.
    \end{prop}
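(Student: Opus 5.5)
To prove the Proposition I would check exactness of
$$0\to M\stackrel{\mathrm{div}}{\to} \Div_{T}(X_{\Sigma})\to \Cl(X_{\Sigma})\to 0$$
at each of the three places. The right-hand map is $D\mapsto[D]$.

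\emph{Surjectivity on the right.} Let $U=T\subseteq X_{\Sigma}$ be the open orbit. Its complement is the union of the prime divisors $D_\rho$, $\rho\in\Sigma(1)$, together with orbit closures of codimension at least $2$. The localization sequence for class groups then gives an exact sequence
$$\bigoplus_{\rho}\Z D_\rho\to \Cl(X_\Sigma)\to \Cl(T)\to 0.$$
The coordinate ring $\bk[M]$ is a Laurent polynomial ring, hence a UFD, so $\Cl(T)=0$. Therefore every divisor class on $X_\Sigma$ is represented by a $T$-invariant divisor.

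\emph{Exactness in the middle.} Let $D\in\Div_T(X_\Sigma)$ with $D=\divT(f)$ for some $f\in\bk(X_\Sigma)^*$. Restricting to $T$ gives $\divT(f|_T)=0$, so $f$ is a unit in $\bk[M]$. The units of a Laurent polynomial ring over a field are $c\chi^{\m}$ with $c\in\bk^*$, so $f=c\chi^{\m}$. Thus $D=\divT(\chi^{\m})$, and by the formula recalled just before the statement this equals $\sum_\rho\langle\m,\bu_\rho\rangle D_\rho$. To justify that formula, I would compute the valuation of $\chi^{\m}$ along $D_\rho$ in the smooth chart $U_\rho$, whose coordinate ring is a Laurent polynomial ring in $\mathrm{rank}(M)-1$ variables times a polynomial ring in one variable. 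Choosing a basis of $M$ adapted to $\bu_\rho$, this valuation is $\langle\m,\bu_\rho\rangle$. Conversely, the image of $\divT$ consists of principal divisors, so the composite $M\to\Cl(X_\Sigma)$ is zero.

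\emph{Injectivity on the left.} This is the only place completeness is used. If $\langle\m,\bu_\rho\rangle=0$ for all rays $\rho$, then $\m$ is orthogonal to every ray. Since $\Sigma$ is complete, the rays span $N_\R$: their cones cover $N_\R$, and each cone is generated by its rays. Hence $\m=0$.

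I expect no step here to be a real obstacle. The most delicate point is identifying the valuation along $D_\rho$ with $\langle\m,\bu_\rho\rangle$, which needs the explicit chart $U_\rho$ and the primitivity of $\bu_\rho$.
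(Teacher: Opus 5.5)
Your proof is correct and follows essentially the standard argument of \cite[4.1.2, 4.1.3]{CLS11}, which the paper cites rather than reproving. Surjectivity comes from the localization sequence together with $\Cl(T)=0$, exactness in the middle from the units $c\chi^{\m}$ of $\bk[M]$ combined with the valuation computation in the chart $U_\rho$, and completeness is used only to make the $\bu_\rho$ span $N_{\R}$, which gives injectivity.
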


 Note that if our complete fan $\Sigma$ is additionally smooth then our variety $X_{\Sigma}$ is smooth and the proposition determines the Picard group of $X_{\Sigma}$ since  the Picard group and the divisor class group coincide for smooth varieties.  That is, if $\Sigma$ is also smooth, $\Pic(X_{\Sigma})=\Cl(X_{\Sigma})$.

 For the normal toric variety $X=X_{\Sigma}$ determined by a fan $\Sigma$ and a $T$-invariant Weil divisor $D$ of $X$,  one can describe the global sections of the  coherent $\cO_X$-sheaf $\cO_X(D)$ determined as:
 
 $$\cO_{X}(D)(U)=\{f\in k(X)^*: (\divT(f)+D)\vert_U\ge 0\}\cup \{0\}, U \mbox{ open in } X$$
 in terms of the lattice points of a polyhedron determined by $D$.
 
 \begin{prop}~\cite[4.3.2,4.3.3,4.3.8]{CLS11}

  Let $D=\oplus_{\rho\in \Sigma(1)}a_{\rho}D_{\rho}$ be a $T$-invariant Weil divisor on $X_{\Sigma}$ for a fan $\Sigma$. The polyhedron $P_D$ in $M_{\R}$ corresponding to $D$ is
$$P_D=\bigcap_{\rho\in \Sigma(1)}H_{\bu_{\rho},-a_{\rho}}.$$
Then the sheaf of the divisor $D$, $\cO_{X_{\Sigma}}(D)$, has global sections 
   $$\Gamma(X_{\Sigma},\cO_{X_{\Sigma}}(D))=\bigoplus_{\m\in P_D\cap M}k\chi^{\m}$$
determined by the lattice points of the polyhedron $P_D$ in $M_{\R}$. 

   When $\Sigma$ is complete, $P_D$ is a polytope in $M_{\R}$ and $\Gamma(X_{\Sigma},\cO_{X_{\Sigma}}(D))$ is a finite dimensional $\bk$-vector space determined by the lattice points of $P_D$.
\end{prop}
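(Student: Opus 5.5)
The plan is to compute global sections chart by chart and then glue. Fix the $T$-invariant Weil divisor $D=\sum_{\rho}a_{\rho}D_{\rho}$. A rational function $f\in k(X_{\Sigma})^*=\bk(M)^*$ is a global section of $\cO_{X_{\Sigma}}(D)$ exactly when $\divT(f)+D\ge 0$ on every affine chart $U_{\sigma}$, $\sigma\in\Sigma$. The torus $T$ acts on $\Gamma(X_{\Sigma},\cO_{X_{\Sigma}}(D))$ because $D$ is $T$-invariant. Since $T$ is split, this space decomposes into weight spaces, and each weight space is spanned by a character $\chi^{\m}$. Concretely, $\Gamma(X_{\Sigma},\cO(D))\subseteq\Gamma(T,\cO_T)=\bk[M]$, because $D$ is supported off $T$. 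A Laurent polynomial $f=\sum c_{\m}\chi^{\m}$ is then a section if and only if each monomial $\chi^{\m}$ with $c_{\m}\neq 0$ is one. I will justify this by working on each $U_{\sigma}$, where the condition is a valuation condition along each $D_{\rho}$, $\rho\in\sigma(1)$. The valuation $v_{\rho}$ is monomial: $v_{\rho}(f)=\min_{c_{\m}\ne0}\langle\m,\bu_{\rho}\rangle$, since the divisor $D_{\rho}$ corresponds to the $\bu_{\rho}$-grading.

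It therefore suffices to decide when a single character $\chi^{\m}$ is a section. By the formula $\divT(\chi^{\m})=\sum_{\rho}\langle\m,\bu_{\rho}\rangle D_{\rho}$, the condition $\divT(\chi^{\m})+D\ge0$ reads $\langle\m,\bu_{\rho}\rangle\ge -a_{\rho}$ for all $\rho\in\Sigma(1)$. In other words, $\m\in\bigcap_{\rho}H^+_{\bu_{\rho},-a_{\rho}}=P_D$. Here I read the intersection in the statement as one of positive half-spaces, which is clearly the intended meaning. Combining this with the first paragraph gives $\Gamma(X_{\Sigma},\cO(D))=\bigoplus_{\m\in P_D\cap M}\bk\chi^{\m}$.

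For the completeness claim, suppose $|\Sigma|=N_{\R}$. If $P_D$ were unbounded, its recession cone $\{\m:\langle\m,\bu_{\rho}\rangle\ge0\ \forall\rho\}$ would contain some $\m\ne0$. Every $\bu\in N_{\R}$ lies in some cone, so it is a nonnegative combination of the $\bu_{\rho}$, which gives $\langle\m,\bu\rangle\ge0$ for all $\bu$. Hence $\m=0$, a contradiction. So $P_D$ is a bounded polyhedron, that is, a polytope, with finitely many lattice points, and the space of sections is finite dimensional.

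The main obstacle is the first step: justifying that the valuation along $D_{\rho}$ is the monomial minimum, so that the section condition can be tested monomial by monomial. I would handle it on the affine chart $U_{\rho}\cong\bk[\rho^{\vee}\cap M]$. This is a localized polynomial ring $\bk[x^{\pm1}_2,\dots][x_1]$ with $x_1=\chi^{\m_1}$, where $\langle\m_1,\bu_{\rho}\rangle=1$. In that ring the order of vanishing along $x_1=0$ is visibly the minimal $x_1$-degree of the terms of $f$.
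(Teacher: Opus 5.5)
Your proof is correct. The paper gives no argument of its own here beyond citing \cite[4.3.2, 4.3.3, 4.3.8]{CLS11}, and your outline has the same overall shape as that source. You reduce to characters, read $\divT(\chi^{\m})+D\ge 0$ as $\langle \m,\bu_{\rho}\rangle\ge -a_{\rho}$ for every $\rho\in\Sigma(1)$, and get boundedness from the fact that the $\bu_{\rho}$ positively span $N_{\R}$ when $\Sigma$ is complete. You are also right that the intersection in the statement must be read as one of half-spaces $H^+_{\bu_{\rho},-a_{\rho}}$.

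The genuine difference is how you justify the reduction to monomials. Cox--Little--Schenck argue chart by chart: $\Gamma(U_{\sigma},\cO(D))$ is a $T$-stable subspace of $\bk[M]$, hence spanned by the characters it contains, and the global statement follows by intersecting over $\sigma\in\Sigma$. You mention this weight-space argument, but what you actually rely on is the explicit formula $v_{\rho}(f)=\min_{c_{\m}\neq 0}\langle \m,\bu_{\rho}\rangle$. You read it off on $U_{\rho}=\Spec\bk[x_1,x_2^{\pm 1},\dots,x_n^{\pm 1}]$; primitivity of $\bu_{\rho}$ is what lets you take $x_1=\chi^{\m_1}$ with $\langle\m_1,\bu_{\rho}\rangle=1$ and the remaining coordinates in $\bu_{\rho}^{\perp}\cap M$. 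You combine this with $f\in\bk[M]$ and the fact that the only prime divisors of $X_{\Sigma}$ disjoint from $T$ are the $D_{\rho}$.

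Your route is elementary and characteristic-free, which suits the paper's setting of an arbitrary field $\bk$. Over a finite field, ``$T$-stable'' cannot be taken to mean stable under $T(\bk)$: for a split torus over $\mathbb{F}_2$ this group is trivial. So the CLS lemma has to be rephrased via the $M$-grading, while your valuation computation needs no such care. The CLS route, in exchange, avoids computing any valuations and applies verbatim to any $T$-stable subspace of $\bk[M]$.
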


    If $\Sigma$ is a complete fan in $N_{\R}$, a $T$-invariant Weil divisor $D$ is Cartier if and only if $D$ is principal on each $U_{\sigma}$ for the full-dimensional cones $\sigma\in \Sigma(n)$.

    So given a $T$-invariant Cartier divisor $D=\sum_{\rho \in \Sigma(1)}a_{\rho}D_{\rho}$ on $X_{\Sigma}$ for a complete fan $\Sigma$, 
    there exist $\m_{\sigma}\in M$ for each $\sigma\in \Sigma$, such that
    $$D_{U_{\sigma}}=\divT(\chi^{-\m_{\sigma}})$$ or equivalently such that $\langle \m_{\sigma},\bu_{\rho}\rangle \ge -a_{\rho}$ for each $\rho\in \Sigma(1)$. 
    We say that the \emph{Cartier data} of $D$ is $\{\m_{\sigma}:\sigma\in \Sigma\}$.
    $D$ determines a well-defined map $\varphi_D:N_{\R}\to \R$, called a \emph{support function}, which is linear on each cone $\sigma\in \Sigma$ and integer-valued on $N$:
    $(\varphi_D)(\bu)=\langle \m_{\sigma},\bu\rangle, u\in \sigma$.

The set of such support functions for a complete fan $\Sigma$ is called $\SF(\Sigma,N)$
and is in bijection with the set of $T$-invariant Cartier divisors on $\Sigma$ under the map $D\to \varphi_D$. (See ~\cite[4.2.12]{CLS11}.)

If $D$ is an ample Cartier divisor on a normal variety $X$,
there exists some $r\in \N$ such that $rD$ is very ample.
This means that $\cO_X(rD)$ is generated by global sections and that it determines a closed embedding into some projective space.

One can phrase the condition for a $T$-invariant divisor on $X_{\Sigma}$ to determine a closed embedding into projective space using support functions.

\begin{thm}~\cite[6.1.14]{CLS11}
Let $\Sigma$ be a complete fan in $N_{\R}$.
A $T$-invariant Cartier divisor $D$ with Cartier data on $X_{\Sigma}$ with Cartier data $\{m_{\sigma}:\Sigma\}$ is ample if and only if its support function $\varphi_D:N_{\R}\to \R$ is strictly convex: that is, it is a convex function
such that
$$\varphi_D(\bu)=\langle \m_{\sigma},\bu\rangle \mbox{ if and only if }\bu\in \sigma$$
\end{thm}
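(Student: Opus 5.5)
The statement is the ampleness criterion: a $T$-invariant Cartier divisor $D$ on the complete toric variety $X_{\Sigma}$, with Cartier data $\{\m_{\sigma}\}$, is ample if and only if its support function $\varphi_D$ is strictly convex. The plan is to pass through very ampleness of a multiple, and to encode everything in the polytope $P_D$ together with the Cartier data. Throughout I read "convex" in the sign convention matching $\langle \m_{\sigma},\bu_{\rho}\rangle \ge -a_{\rho}$, that is, $\varphi_D(\bu)\le \langle \m_{\sigma'},\bu\rangle$ for all cones $\sigma'$ and all $\bu$, or the reverse inequality in the other convention. The first step is to relate global generation to convexity. I would use the proposition computing $\Gamma(X_{\Sigma},\cO_{X_{\Sigma}}(D))$ as $\bigoplus_{\m\in P_D\cap M}\bk\chi^{\m}$. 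From it, $\cO(D)$ is generated by global sections on $U_{\sigma}$ exactly when the local generator $\chi^{\m_{\sigma}}$ is itself a global section, i.e. when $\m_{\sigma}\in P_D$ for every maximal cone $\sigma$. Unwinding the inequalities defining $P_D$, and using completeness so that every $\bu$ lies in some cone, this is exactly the convexity of $\varphi_D$, with $P_D=\Conv\{\m_{\sigma}\}$.

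The second step is the direction ``strictly convex $\Rightarrow$ ample''. Strict convexity says $\m_{\sigma}\ne \m_{\sigma'}$ for distinct maximal cones, and that the $\m_{\sigma}$ are precisely the vertices of $P_D$. Moreover, the normal fan of $P_D$ is $\Sigma$: the cone of $\sigma$ is the set of $\bu$ on which $\m_{\sigma}$ minimizes (or maximizes) $\langle\cdot,\bu\rangle$ over $P_D$. Replace $D$ by $rD$ with $r\ge \dim -1$, so that $P_{rD}=rP_D$ is normal by the cited result. Then the sections $\chi^{\m}$ with $\m\in rP_D\cap M$ define a morphism $X_{\Sigma}\to \PP^{s}$. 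On each $U_{\sigma}$, after dividing by $\chi^{r\m_{\sigma}}$, the coordinate ring of the image contains the semigroup generated by $rP_D\cap M - r\m_{\sigma}$. By normality this semigroup generates $\sigma^{\vee}\cap M$, because $\sigma^{\vee}$ is the tangent cone of $P_D$ at that vertex. Hence the map is a closed embedding on each affine chart, the charts glue, and $rD$ is very ample, so $D$ is ample.

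The third step is the direction ``ample $\Rightarrow$ strictly convex''. Take $rD$ very ample; then it is globally generated, so $\varphi_{rD}=r\varphi_D$ is convex by the first step. Suppose strict convexity fails. Then some $\bu\notin\sigma$ satisfies $\varphi_D(\bu)=\langle \m_{\sigma},\bu\rangle$, and convexity forces two adjacent maximal cones sharing a wall $\tau$ to have equal $\m_{\sigma}$. Now restrict to the curve $V(\tau)\cong\PP^1$. The degree of $D$ on it is measured by the difference $\m_{\sigma}-\m_{\sigma'}$ paired with a primitive vector normal to $\tau$, and here that degree is zero. An ample divisor has positive degree on every complete curve, which is a contradiction.

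The main obstacle is the second step, specifically verifying that normality of $rP_D$ gives the full semigroup $\sigma^{\vee}\cap M$ at each vertex, and hence a closed embedding rather than just a finite morphism. I would handle it by writing each $\m\in\sigma^{\vee}\cap M$ as lying in $k(rP_D-r\m_{\sigma})$ for large $k$. Normality then expresses it as a sum of lattice points of $rP_D-r\m_{\sigma}$, which is the required generation.
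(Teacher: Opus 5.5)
Your proof is correct. The paper gives no argument of its own here and only cites \cite[6.1.14]{CLS11}, so the comparison is with the standard proof there.

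Your direction ``strictly convex $\Rightarrow$ ample'' is essentially that proof. Convexity is equivalent to global generation. Strict convexity makes $\Sigma$ the normal fan of the lattice polytope $P_D=\Conv\{\m_\sigma\}$. Normality of $rP_D$ then gives generation of $\sigma^\vee\cap M$ at each vertex.

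Your direction ``ample $\Rightarrow$ strictly convex'' takes a genuinely different route. You test positivity on the invariant curves $V(\tau)$ attached to walls. In effect you combine the easy half of the toric Kleiman criterion with a wall-crossing lemma, and this shows that strict convexity is a wall-by-wall positivity condition.

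The cost is the wall lemma itself, which you only assert: a convex but not strictly convex $\varphi_D$ has $\m_\sigma=\m_{\sigma'}$ across some wall $\tau=\sigma\cap\sigma'$. It is true. The locus $\{\varphi_D=\langle\m_\sigma,\cdot\rangle\}$ is a convex cone equal to the union of the maximal cones $\sigma''$ with $\m_{\sigma''}=\m_\sigma$. If it strictly contains $\sigma$, its interior meets the relative interior of some wall of $\sigma$. Once you have the lemma, no degree formula is needed: $D+\divT(\chi^{\m_\sigma})$ is zero on $U_\sigma\cup U_{\sigma'}\supseteq V(\tau)\cong\PP^1$, so $\cO(D)|_{V(\tau)}$ is trivial, which an ample bundle cannot be.

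A polytope-side route, closer in spirit to \cite{CLS11} (where intersection numbers only appear later), avoids curves and adjacency altogether. The map $\phi_{kD}$ sends the fixed point of $U_\sigma$ to the coordinate point indexed by $k\m_\sigma$, so injectivity forces the $\m_\sigma$ to be pairwise distinct. Convexity together with distinctness of the $\m_\sigma$ is already equivalent to strict convexity.

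One check in your second step is hidden in ``the charts glue'' and should be made explicit. You need $\phi_{rD}^{-1}(\{x_{r\m_\sigma}\neq 0\})=U_\sigma$, where $x_{r\m_\sigma}$ is the homogeneous coordinate indexed by $r\m_\sigma$. Without this, closed embeddings on each chart do not give injectivity across charts. It holds because the global section $\chi^{r\m_\sigma}$ of $\cO(rD)$ vanishes exactly along the $D_\rho$ with $\langle\m_\sigma,\bu_\rho\rangle>-a_\rho$. By strict convexity these are exactly the $\rho\notin\sigma(1)$, and the complement of their union is $U_\sigma$.
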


Note that there are many equivalent conditions to check whether a support function $\varphi_D$ is strictly convex. See, for example,~\cite[Lemma 6.1.13]{CLS11},
\cite[Prop 1]{CTHS05}.

For our complete fan $\Sigma$, and an ample Cartier divisor $D$ such that the multiple $rD$ is very ample, 
the global sections of the line bundle associated to $\cO_{X_{\Sigma}}(rD)$ are generated by
$\{\chi^{\m_i}:i=1,\dots, s\}$ where $rP_{D}\cap M=\{\m_1,\dots,\m_s\}$ are the lattice points of the polytope of the divisor $rD$.
Then the morphism determined by the line bundle $\cO_{X_{\Sigma}}(rD)$
$$\phi_{rD}:X_{\Sigma}\to \PP^{s-1},p\to (\chi^{\m_1}(p),\dots,\chi^{\m_s}(p))$$
is a closed embedding. 

A full-dimensional lattice polytope $P\subseteq M_{\R}$ determines a complete fan called the \emph{normal fan} of $P$.  A facet has a unique supporting affine hyperplane of the form $H_{\bu_F,-a_{F}}$ where $\bu_F\in N, a_F\in \Z$ so that
$$P=\bigcap_{F\mbox{\tiny{ facet of }} P}H_{\bu_F,-a_F}^+$$

For each face $Q\preceq P$, $\sigma_Q=\Cone\{\bu_F: Q\preceq F\}$ is a strictly convex rational polyhedral cone in $N_{\R}$ of dimension $\dim(N_{\R})-\dim(Q)$.

Then $\Sigma_P=\{\sigma_Q: Q\preceq P\}$ is a complete fan in $N_{\R}$.
In the special case in which the full-dimensional lattice polytope $P\subseteq M_{\R}$ contains $0$ in its interior, then the normal fan of $P$ consists of cones over faces of the rational (possibly integral) polytope $P^0$.

A full-dimensional lattice polytope $P$ also determines an ample $T$-invariant Cartier divisor on $X_{\Sigma_P}$, $D_P=\sum_{F\mbox{\tiny{ facet of }} P}a_FD_F$.
In fact, $D_P$ restricted to $U_{\sigma_{\v}}$ for each $\v\in P\cap M$ is $D_P\vert_{U_{\sigma_{\v}}}=\divT(\chi^{-{\v}})$.
The Cartier data of $D_P$ is then determined by the set of lattice points $P\cap M$.   

The polytope of the divisor $D_P$ is $P$.  If $r\ge n-1$ then $rP$ is normal and $\varphi_{D_P}$ recovers the projective embedding of $X_P$ into projective space via $rP$.  

To summarize: a full-dimensional lattice polytope $P$ on $M_{\R}$ determines both a complete  fan $\Sigma_P$ on $N_{\R}$ and a $T$-invariant ample Cartier divisor $D_P$ on $X_P$.  This in turn determines a $T$-invariant closed projective embedding of the normal $T$-variety $X_P$.
But this map $P\to (\Sigma_P,D_P)$ is in fact a bijection.  (See: \cite[6.2.1]{CLS11}.) In other words, given a complete fan $\Sigma$ on $M_{\R}$ and a $T$-invariant ample Cartier divisor on $X_{\Sigma}$ determines a $T$-invariant closed projective embedding of $X_{\Sigma}$.

We call a complete fan $\Sigma$ on $N_{\R}$ \emph{projective} if there exists a $T$-invariant ample Cartier divisor $D$ on $X_{\Sigma}$.  This then determines a $T$-invariant closed projective embedding of $X_{\Sigma}$.

\subsection{Toric Morphisms and Resolutions of Singularities}

\begin{defn}~\cite[3.3.1]{CLS11}

  Let $T_i,i=1,2$ be split $\bk$-tori with character lattices $M_i,i=1,2$ and cocharacter lattices $N_i,i=1,2$.
  Let $\Sigma_i$ be a fan in $(N_i)_{\R}$, $i=1,2$.
  
      A $\Z$-linear map $\overline{\phi}$ is \emph{compatible} with $\Sigma_1$ and $\Sigma_2$ if and only if for every $\sigma_1\in \Sigma_1$, there exists a $\sigma_2\in \Sigma_2$ such that $(\overline{\phi})_{\R}(\sigma_1)\subseteq \sigma_2$.

      Note that if $\sigma_1\in \Sigma_1,\sigma_2\in \Sigma_2$ are as in the definition, there is a natural induced morphism $U_{\sigma_1}\to U_{\sigma_2}$ with respect to the $T_1$ and $T_2$ actions.
      
      A morphism $\phi:X_{\Sigma_1}\to X_{\Sigma_2}$ is a \emph{toric morphism} if $\phi$ restricts to a morphism $T_1\to T_2$ of algebraic groups.
      Note that toric morphisms are equivariant morphisms for the $T_1$ and $T_2$ actions.
    \end{defn}

\begin{thm}~\cite[3.3.4]{CLS11}

  The $\Z$-linear map $\overline{\phi}:N_1\to N_2$ induced by a toric morphism $\phi:X_{\Sigma_1}\to X_{\Sigma_2}$ defines a bijection between the set of toric morphisms between $X_{\Sigma_1}$ and $X_{\Sigma_2}$ and the set of $\Z$-linear maps between $N_1$ and $N_2$ which are compatible with $\Sigma_1$ and $\Sigma_2$.

 In particular, the toric automorphisms of $X_{\Sigma}$ are in bijection with $\Aut_{\Sigma}(N)$, the subgroup of $\Aut(N)$ which are compatible with $\Sigma$ and itself.     
    \end{thm}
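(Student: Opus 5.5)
The statement is the classical correspondence between toric morphisms $X_{\Sigma_1}\to X_{\Sigma_2}$ and $\Z$-linear maps $\overline{\phi}:N_1\to N_2$ compatible with the fans. The plan is to build maps in both directions, show they are mutually inverse, and then specialise to automorphisms.

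\textbf{From morphisms to lattice maps.} Start with a toric morphism $\phi$. Its restriction $\phi|_{T_1}:T_1\to T_2$ is a homomorphism of split tori. Composing with cocharacters $\lambda^{\n}$, $\n\in N_1$, gives a $\Z$-linear map $\overline{\phi}:N_1\to N_2$; dually, pulling back characters gives $\phi^*:M_2\to M_1$, with $\overline\phi$ the transpose of $\phi^*$. To see compatibility, fix $\sigma_1\in\Sigma_1$ and use the orbit--cone correspondence (the theorem on fans and orbits recalled above). For $\n$ in the relative interior of $\sigma_1$, the limit $\lim_{t\to 0}\lambda^{\n}(t)$ exists and equals the distinguished point $\gamma_{\sigma_1}$ of $\cO(\sigma_1)$. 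Continuity of $\phi$ then shows that $\lim_{t\to0}\lambda^{\overline\phi(\n)}(t)$ exists in $X_{\Sigma_2}$. The existence of this limit forces $\overline\phi(\n)\in|\Sigma_2|$, and more precisely $\overline\phi(\n)$ lies in the relative interior of the unique cone $\sigma_2$ with $\phi(\gamma_{\sigma_1})\in\cO(\sigma_2)$. This cone does not depend on the choice of $\n$, so $\overline\phi(\mathrm{relint}\,\sigma_1)\subseteq\sigma_2$, and taking closures gives $\overline\phi_\R(\sigma_1)\subseteq\sigma_2$. The limit criterion needed here follows directly from the affine description: $\lim_{t\to0}\lambda^{\n}(t)$ exists in $U_\sigma$ exactly when $\langle \m,\n\rangle\ge0$ for all $\m\in\sigma^\vee\cap M$, that is, when $\n\in\sigma$.

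\textbf{From lattice maps to morphisms.} Conversely, suppose $\overline\phi$ is compatible. For each $\sigma_1$, choose $\sigma_2$ with $\overline\phi_\R(\sigma_1)\subseteq\sigma_2$. The dual map $\phi^*$ then sends $\sigma_2^\vee\cap M_2$ into $\sigma_1^\vee\cap M_1$, which gives a semigroup algebra map $\bk[\sigma_2^\vee\cap M_2]\to\bk[\sigma_1^\vee\cap M_1]$ and hence a morphism $U_{\sigma_1}\to U_{\sigma_2}\subseteq X_{\Sigma_2}$. All of these morphisms restrict to the same homomorphism $T_1\to T_2$ on the dense torus. Since $X_{\Sigma_2}$ is separated, they agree on the overlaps $U_{\sigma_1\cap\sigma_1'}$, so they glue to a morphism $X_{\Sigma_1}\to X_{\Sigma_2}$, and this morphism is toric by construction. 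The construction is independent of the choice of $\sigma_2$ for the same reason.

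\textbf{Bijectivity and automorphisms.} The two constructions are inverse to each other. In one direction, a toric morphism is determined by its restriction to the dense open torus, by separatedness. In the other, a homomorphism of split tori is determined by its action on cocharacters. For automorphisms, an automorphism of $X_\Sigma$ corresponds to an invertible $\overline\phi\in\Aut(N)$ such that both $\overline\phi$ and its inverse are compatible with $\Sigma$. This means $\overline\phi$ permutes the cones of $\Sigma$, which gives $\Aut_\Sigma(N)$.

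\textbf{Expected main obstacle.} The delicate point is the compatibility direction. One must justify that the existence of the limit of a one-parameter subgroup in $X_{\Sigma_2}$ places the cocharacter in a single cone of $\Sigma_2$, and that this cone is the same for all of $\mathrm{relint}\,\sigma_1$. This is exactly where the orbit--cone correspondence and the affine limit computation are used.
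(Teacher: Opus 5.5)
The paper gives no proof of this statement and simply cites \cite[3.3.4]{CLS11}. Your argument is essentially the standard proof given there: limits of one-parameter subgroups $\lambda^{\n}$ to distinguished points, via the orbit--cone correspondence, give compatibility, and gluing the affine morphisms $U_{\sigma_1}\to U_{\sigma_2}$ induced by $\sigma_2^{\vee}\cap M_2\to\sigma_1^{\vee}\cap M_1$, using separatedness and reducedness, gives the converse and bijectivity. Over the arbitrary field $\bk$ of this paper, read each limit as extending $\lambda^{\n}:\Gm_m\to X$ to a morphism $\A^1\to X$; your argument then goes through unchanged.
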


    A refinement of a fan $\Sigma$ on $N_{\R}$ is a fan $\Sigma'$ on $N_{\R}$ such that every cone in $\Sigma'$ is contained in a cone of $\Sigma$ and $|\Sigma'|=|\Sigma|$.
    If $\Sigma'$ is a refinement of a fan $\Sigma$, then $\id_N:N\to N$ is compatible with $\Sigma',\Sigma$ and so determines a toric morphism $X_{\Sigma'}\to X_{\Sigma}$.

    \begin{defn}~\cite[p. 515]{CLS11}

      A \emph{star subdivision} of a fan $\Sigma$ in $N_{\R}$ and a primitive element $\v\in |\Sigma|\cap N$ (i.e. such that $\v$ extends to a $\Z$-basis of $N$)
       is the set of cones
       $$\Sigma^*(\v)=\Sigma_0(\v)\cup \Sigma'(\v)$$
       where $\Sigma_0(\v)=\{\sigma\in \Sigma: \v\not\in \sigma\}$
       and $\Sigma'(\v)=\{\Cone(\tau,\v):\sigma\in \Sigma\setminus \Sigma_0(\v),\tau\in \Sigma_0(\v), \v\cup \tau\subseteq \sigma\}$.
     \end{defn}

    \begin{prop}~\cite[11.1.6]{CLS11}

      A star subdivision $\Sigma^*(\v)$ of a fan $\Sigma$ in $N_{\R}$ is a refinement of the fan $\Sigma$ such that $\Sigma^*(\v)(1)=\Sigma(1)\cup \{\Cone(\v)\}$ and such that the induced toric morphism $\phi:X_{\Sigma^*(\v)}\to X_{\Sigma}$ is projective.
     \end{prop}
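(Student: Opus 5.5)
The plan is to verify directly that $\Sigma^*(\v)$ is a fan refining $\Sigma$ with the stated rays, and then to exhibit a relatively ample divisor, which gives projectivity.

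\emph{Step 1: each new cone is strongly convex and rational.} Let $\sigma\in\Sigma$ contain $\v$, and let $\tau\in\Sigma_0(\v)$ be a face of $\sigma$ with $\v\notin\tau$. Then $\Cone(\tau,\v)\subseteq\sigma$. Since $\sigma$ is strongly convex, any subcone of it is strongly convex as well.

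\emph{Step 2: the new cones cover $\sigma$.} I will show that the cones $\Cone(\tau,\v)$, with $\tau$ ranging over the faces of $\sigma$ not containing $\v$, cover $\sigma$ and intersect in common faces. Take $\bu\in\sigma$ and consider the ray $\bu-t\v$ for $t\ge 0$. It stays in $\sigma$ until some largest $t_0$, at which point it reaches a proper face $\tau$ of $\sigma$. That face cannot contain $\v$: otherwise $\bu-t\v$ would remain in $\tau$ for slightly larger $t$. Hence $\bu=(\bu-t_0\v)+t_0\v\in\Cone(\tau,\v)$. This gives $|\Sigma^*(\v)|=|\Sigma|$.

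\emph{Step 3: the intersection property.} Consider the projection along $\v$ onto $N_\R/\R\v$. The faces of $\sigma$ not containing $\v$ that are visible from $\v$ map injectively, so they form a fan-like complex in the quotient. From this, intersections of two cones $\Cone(\tau_1,\v)$ and $\Cone(\tau_2,\v)$ equal $\Cone(\tau_1\cap\tau_2,\v)$. Cones in $\Sigma_0(\v)$ meet new cones in faces, because $\Sigma_0(\v)$ is closed under faces. The rays of the new fan are the old rays together with $\Cone(\v)$, because the generators of any $\Cone(\tau,\v)$ are $\v$ and the rays of $\tau$.

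\emph{Step 4: projectivity.} The identity map on $N$ is compatible with $\Sigma^*(\v)$ and $\Sigma$, so it gives a proper birational toric morphism $\phi$. For projectivity I take the support function $\psi$ that is linear on each cone of $\Sigma^*(\v)$, satisfies $\psi(\v)=-1$, and vanishes on all other ray generators. Equivalently, $\psi$ corresponds to the divisor $-D_{\v}$. On each $\sigma\in\Sigma$ containing $\v$, the function $\psi$ is the support function of the pulling subdivision (a convex function obtained by lowering $\v$). Hence $\psi$ is strictly convex on $\sigma$ relative to the cones of $\Sigma^*(\v)$ lying inside $\sigma$.

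By the relative version of the ampleness criterion [CLS11, 6.1.14 / 7.2], $-D_{\v}$ is then $\phi$-ample, and so $\phi$ is projective. Ampleness only needs to be checked over each affine $U_\sigma$. There $\phi^{-1}(U_\sigma)$ is the toric variety of the subdivision of $\sigma$, and strict convexity of $\psi$ restricted to $\sigma$ gives an ample divisor on that variety over $U_\sigma$.

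\emph{Expected main obstacle.} The hardest point is the strict convexity in Step 4 when $\sigma$ is not simplicial. There one must check that the pulled function is genuinely non-linear across every wall $\Cone(\tau,\v)$ inside $\sigma$. I would argue that adjacent regions have distinct linear extensions, because each region's function is forced to be $0$ on its facet $\tau$ and $-1$ at $\v$, and two facets $\tau\neq\tau'$ span different hyperplanes. Step 3's intersection bookkeeping is routine by comparison.
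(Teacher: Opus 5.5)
The paper gives no argument of its own here, only the citation to Cox--Little--Schenck. Your strategy is the standard one: verify the fan axioms, then show that a multiple of $-D_{\v}$ is $\phi$-ample by checking relative strict convexity on each $\sigma\in\Sigma$. But two steps do not go through as written.

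\textbf{Step 3.} Injectivity of $\pi\colon N_\R\to N_\R/\R\v$ on each face $\tau\not\ni\v$ separately does not show that the images of different faces fit together. What you need is injectivity of $\pi$ on the \emph{union} of all faces of $\sigma$ not containing $\v$, and that needs an argument. The missing observation is this:
\begin{itemize}
\item If $\tau=\sigma\cap H_{\m}$ with $\m\in\sigma^\vee$ and $\v\notin\tau$, then $\langle\m,\v\rangle>0$.
\item So if $\bu=\w+t\v$ with $\w\in\tau$ and $t\ge 0$, then $\langle\m,\bu-s\v\rangle=(t-s)\langle\m,\v\rangle<0$ for all $s>t$.
\item Hence $t=t_0(\bu):=\max\{t\ge0:\bu-t\v\in\sigma\}$, and $\w=\bu-t_0(\bu)\v$ is independent of $\tau$.
\end{itemize}
This gives $\Cone(\tau_1,\v)\cap\Cone(\tau_2,\v)=\Cone(\tau_1\cap\tau_2,\v)$. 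That set is a face of $\Cone(\tau_1,\v)$: since $\v\notin\Span(\tau_1)$ (again because $\langle\m,\v\rangle>0$), a functional cutting $\tau_1\cap\tau_2$ out of $\tau_1$ can be extended by $0$ on $\v$.

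You still have three things to handle:
\begin{itemize}
\item Cones in different $\sigma_1,\sigma_2$ must be reduced to the common face $\sigma_1\cap\sigma_2$. Use that if $\w+t\v$ lies in a face of $\sigma$ with $\w,\v\in\sigma$, then so does $\w$.
\item An old cone $\rho\not\ni\v$ meets $\Cone(\tau,\v)$ in $\rho\cap\tau$. The reason is that the face $\rho\cap\sigma$ cannot contain $\w+t\v$ with $t>0$; closure of $\Sigma_0(\v)$ under faces is not the reason.
\item The faces of $\Cone(\tau,\v)$ are exactly the $\tau'$ and $\Cone(\tau',\v)$ with $\tau'\preceq\tau$, so $\Sigma^*(\v)$ is closed under faces.
\end{itemize}
Also, in Step~2 you must take $\tau$ to be the face containing $\bu-t_0\v$ in its relative interior. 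Otherwise the claim that $\tau$ cannot contain $\v$ fails.

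\textbf{Step 4.} $\psi$ need not be integral on $N$, so $-D_{\v}$ need not be Cartier. In $N=\Z^2$, take $\sigma=\Cone(\e_1,\e_1+3\e_2)$ and $\v=\e_1+\e_2$. The linear piece of $\psi$ on $\Cone(\v,\e_1+3\e_2)$ is $(x,y)\mapsto-\tfrac32x+\tfrac12y$. The pieces are rational, so replace $\psi$ by $k\psi$ and work with the Cartier divisor $-kD_{\v}$.

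The exit time also settles your ``main obstacle'' directly. On $\sigma$,
\[ t_0(\bu)=\min_F\langle\m_F,\bu\rangle/\langle\m_F,\v\rangle, \]
the minimum taken over facets $F$ of $\sigma$ not containing $\v$. So $\psi|_\sigma=-t_0=\max_F\ell_F$ with $\ell_F=-\langle\m_F,\cdot\rangle/\langle\m_F,\v\rangle$. Equality $\psi=\ell_F$ holds exactly on $\Cone(F,\v)$. This gives convexity and strictness at once, for non-simplicial $\sigma$ too.

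Finally, watch the signs. The paper's convention $D|_{U_\sigma}=\divT(\chi^{-\m_\sigma})$ gives $\varphi_D(\bu_\rho)=-a_\rho$. The cited ampleness criterion uses ``convex'' in the sense of a minimum of linear functions. So the relevant function is $-\psi=t_0$, which is the support function of $-D_{\v}$; your $\psi$ is that of $D_{\v}$ in this convention. Your conclusion that $-D_{\v}$ is relatively ample is nonetheless correct.
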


    \begin{thm}~\cite[11.1.9]{CLS11},\cite[I \S 2, Lemma 1-3]{KKMSD73}
      
       Every fan $\Sigma$ has a refinement $\Sigma'$ such that
       \begin{itemize}
       \item All cones in $\Sigma'$ are smooth.
       \item All smooth cones of $\Sigma$ are contained in $\Sigma'$.
       \item $\Sigma'$ is obtained from $\Sigma$ by a finite sequence of star subdivisions.
       \item The toric morphism $\phi:X_{\Sigma'}\to X_{\Sigma}$ is a projective resolution of singularities.
       \item If $X_{\Sigma}$ has a $T$-equivariant ample Cartier divisor $D$, then there exists  a $T$-equivariant ample Cartier divisor $D'$ on $X_{\Sigma'}$.  
\end{itemize}
       \end{thm}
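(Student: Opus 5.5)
The plan is to follow the classical toric resolution argument of \cite[I \S 2]{KKMSD73} and \cite[11.1.9]{CLS11}. Every refinement will be built from star subdivisions only, so the earlier Proposition on star subdivisions gives refinements of fans and projective toric morphisms at each stage.

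\emph{Step 1: reduce to simplicial fans.} For every non-simplicial cone $\sigma\in\Sigma$, taken in order of increasing dimension, choose a primitive lattice vector $\v$ in the relative interior of $\sigma$. Typically this is the primitive generator of $\sum_{\rho}\bu_\rho$ over the rays of $\sigma$. Then replace $\Sigma$ by $\Sigma^*(\v)$. Each subdivision replaces $\sigma$ by cones $\Cone(\tau,\v)$, where $\tau$ runs over proper faces of $\sigma$ not containing $\v$. By induction on dimension these cones are simplicial once all faces are. Smooth cones never contain such a $\v$ in their relative interior unless they contain $\sigma$, and smooth cones are simplicial with only simplicial faces, so they lie in $\Sigma_0(\v)$ and survive unchanged.

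\emph{Step 2: reduce multiplicity.} For a simplicial cone $\sigma=\Cone(\bu_1,\dots,\bu_k)$, let $\mult(\sigma)$ be the index of $\Z\bu_1+\dots+\Z\bu_k$ in $(\R\sigma)\cap N$. The cone is smooth iff $\mult(\sigma)=1$. If $\mult(\sigma)>1$, there is a nonzero lattice point $\v=\sum\lambda_i\bu_i$ with $0\le\lambda_i<1$. Take $\sigma$ to be a minimal face containing it, so all $\lambda_i>0$ on that face. Star subdividing at $\v$ replaces each maximal cone $\sigma'\supseteq\sigma$ by cones obtained by swapping one $\bu_i$ for $\v$. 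Each new cone has multiplicity $\lambda_i\,\mult(\sigma')<\mult(\sigma')$. Smooth cones are again untouched, because they contain no such $\v$. Induction on the finite invariant given by the multiset of multiplicities of non-smooth maximal cones (ordered lexicographically) terminates. This yields the first three bullets.

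\emph{Step 3: the last two bullets.} The composite $X_{\Sigma'}\to X_\Sigma$ is a composite of projective birational toric morphisms, hence projective. It is an isomorphism on the torus, and $X_{\Sigma'}$ is smooth, so it is a projective resolution. For the ample divisor, I would argue one subdivision at a time. Let $\varphi$ be a strictly convex support function for $D$ on $\Sigma$. Let $\psi$ be the support function on $\Sigma^*(\v)$ that is linear on each new cone, with $\psi(\v)=1$ and $\psi=0$ on all old rays; $-\psi$ is convex on stars. Then for $N\gg0$, $N\varphi-\psi$ is strictly convex on $\Sigma^*(\v)$. This follows from the local criterion of \cite[6.1.13]{CLS11}, checking strict convexity across each wall. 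Walls of the old fan are handled by the strict convexity of $N\varphi$ for large $N$. New walls inside $\sigma$ are handled by $-\psi$, along which $\varphi$ is linear.

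\emph{Main obstacle.} I expect the hardest part to be Step 3's wall-by-wall verification, together with ensuring integrality (replace $\v$ by multiples as needed). A secondary concern is the termination bookkeeping in Step 2. The key point there is that subdividing one cone can affect neighbouring cones sharing the face $\sigma$, and those must also have strictly smaller multiplicity, which the formula $\lambda_i\,\mult(\sigma')$ guarantees.
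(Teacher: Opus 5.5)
This is correct and is essentially the standard argument of \cite[11.1.7--11.1.9]{CLS11} and \cite[I, \S 2]{KKMSD73}, which the paper cites rather than proves: simplicialize by star subdivisions at non-simplicial cones in increasing dimension, lower multiplicities by star subdivisions at lattice points of $P_\sigma$, and adjust the support function at each step, as the paper itself describes later. One correction to Step 3: in the paper's convention an ample $D$ has $\varphi_D=\min_\sigma\langle \m_\sigma,\cdot\rangle$, and the function that is strictly convex across the new walls is $\psi=\varphi_{-D_{\v}}$ itself, so the right combination is $k\varphi_D+\psi$ for $k\gg 0$ (the divisor $k\phi^*D-D_{\v}$, up to a multiple to make it Cartier), not your $N\varphi-\psi$, whose $-\psi$ is convex only in the opposite, max-of-linear-functions convention; also, the wall criterion you invoke presumes $\Sigma$ complete, whereas for an arbitrary fan the last bullet follows directly from the projectivity of $\phi$.
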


     For our applications, we will use the barycentric subdivision of a fan.
     The barycentre $\v_{\sigma}$  of a cone $\sigma\in \Sigma$ with primitive generators
     $\bu_{\rho},\rho\in \sigma(1)$ is the minimal generator of $\Cone(\sum_{\rho\in \sigma(1)}\bu_{\rho})\cap N$.  The barycentric subdivision of $\Sigma$ is obtained from $\Sigma$ by a finite sequence of star subdivisions of the cones in $\Sigma$ with respect to their barycentres in order of decreasing dimension of the cones in $\Sigma$.  It is independent of the order of the cones in a given dimension.  The resulting simplicial fan  replaces each chain of cones in $\Sigma$, $\sigma_1\subseteq \cdots\subseteq \sigma_n$ where $\dim(\sigma_i)=i$, with a cone $\Cone(\v_{\sigma_1},\dots,\v_{\sigma_n})$. (See, for example, ~\cite[11.1.7]{CLS11}).

     To pass from a simplicial fan to a smooth fan, we need to subdivide the simplicial cones into smooth cones.  The multiplicity of a simplicial cone $\sigma$ with primitive generators $u_1,\dots,u_n$ is
     $$\mult(\sigma)=[\Span_{\R}(\sigma)\cap N:\sum_{i=1}^n\Z\bu_i].$$

     A simplicial cone $\sigma$ is smooth if and only if $\mult(\sigma)=1$.  Let
     $P_{\sigma}=\{\sum_{i=1}^n\lambda_i\bu_i:0\le \lambda_i<1\}$.
     It turns out that for a simplicial cone $\sigma$, the multiplicity of $\sigma$ can be reinterpreted as the number of interior lattice points in $P_{\sigma}$.  That is, $\mult(\sigma)=|P_{\sigma}\cap N|$.
     For any nonzero interior lattice point $\v$  of $\sigma\in \Sigma$, a star-subdivision $\Sigma^*(\v)$ replaces the cone $\sigma$ with cones of the form $\Cone(\tau,\v)$ where $\tau$ is a face of $\sigma$.  The multiplicities of the new cones $\Cone(\tau,\v)$ created in the star subdivision are strictly smaller than that of the multiplicity of $\sigma$. (See \cite[I,\S 1,Lemma 3]{KKMSD73}.)

     So given any normal projective toric $T$-variety  $X$ corresponding to a split torus  $T$, we have that there exists a fan $\Sigma$ on $N_{\R}$ such that $X\cong X_{\Sigma}$. We can then resolve singularities and obtain a smooth projective toric $T$-variety.        

\section{Algebraic \texorpdfstring{$\bk$}{k}-tori and their Toric Models}

\subsection{Algebraic \texorpdfstring{$\bk$}{k}-tori}

References for this section include~\cite{Vos83,Vos98}.

An algebraic $\bk$-torus $T$ is an algebraic $\bk$-group which is a $\bk$-form of a split torus, $\Gmk^r$.  
That is, after base change to the separable closure $\bk_s$, it becomes isomorphic to $\Gmks^r$.
Isomorphism classes of $\bk$-forms of a fixed quasi-projective $\bk$-variety $Y$ are determined by classes in
the non-abelian Galois cohomology group, $H^1(\Gamma_{\bk},\Aut_{\bk_s}(Y_{\bk_s}))$.
In our case,  the compact group $\Gamma_{\bk}$ acts trivially on the discrete group $\Aut_{\bk_s-\mathrm{gp}}(\Gmks^r)\cong \GL(r,\Z)$, and so the set of isomorphism classes of algebraic $\bk$-tori of dimension $r$ is in bijection with $H^1(\Gamma_{\bk},\Aut_{\bk_s-\mathrm{gp}}(\Gmks^r))=\Hom_{\mathrm{conts}}(\Gamma_{\bk},\GL(r,\Z))$.  

Explicitly, if $T$ is an algebraic $\bk$-torus, there exists a $\bk_s$-isomorphism $f:T_{\bk_s}\to \Gmks^r$.  For each $\sigma\in \Gamma_{\bk}$, $\sigma\in \Aut_k(\bk_s)$ induces natural $\bk$-automorphisms  of $T_{\bk_s}=T\otimes_k \bk_s$ and  $\Gmk^r\otimes_k \bk_s$
via the $\bk_s$ factor.  We will again denote these automorphisms by $\sigma$.
We define $\sigma(f)=\sigma\circ f\circ \sigma^{-1}$.
Then $\sigma(f)$ determines a map $\sigma(f):T_{\bk_s}\to \Gmks^r$ is also a $\bk_s$-isomorphism.  Then $h:\Gamma_{\bk}\to \Aut_{\bk_s}(T_{\bk_s})=\GL(r,\Z), \sigma\to f^{-1}\circ \sigma(f)$ is a 1-cocycle.  Since $\Gamma_{\bk}$ acts trivially on $\GL(r,\Z)$, $h$ is, in fact, a continuous group homomorphism. We can identify an algebraic $\bk$-torus $T$ of dimension $r$  with $h\in \Hom_{\mathrm{conts}}(\Gamma_{\bk},\GL(r,\Z))$ and hence with $h(\Gamma_{\bk})\in \GL(r,\Z)$.
Since any continuous homomorphism from the compact group $\Gamma_{\bk}$ to the discrete group $\GL(r,\Z)$ must have finite image, $h(\Gamma_{\bk})$ is a finite subgroup of $\GL(r,\Z)$.  By Galois theory, $\ker(h)$ is an open normal subgroup of $\Gamma_{\bk}$ of finite index, and so there exists a finite Galois extension $L/\bk$ such that $h(\Gamma_{\bk})\cong \Gal(L/\Bbbk)$.  

In particular, there is a bijection between the set of isomorphism classes of algebraic $\bk$-tori of dimension $r$  and the set of conjugacy classes of  finite subgroups of $\GL(r,\Z)$ which occur as Galois groups over $\bk$.    
Over an algebraically closed field $\overline{\bk}$, all algebraic $\overline{\bk}$-tori are split.
By contrast, at least for small values of $r$,  the finite subgroups of $\GL(r,\Z)$ may all be determined as Galois groups over $\Q$.

A $\bk$-form of a quasiprojective variety can always be split by a finite Galois extension.  In the case of an algebraic $\bk$-torus $T$ determined by $h\in \Hom_{\mathrm{conts}}(\Gamma_{\bk},\GL(r,\Z))$, the image $G=h(\Gamma_{\bk})$ is a finite subgroup of $\GL(r,\Z)$ and so there exists a finite Galois extension $L/\bk$, such that
$\Gamma_{\bk}/\ker(h)=\Gal(L/\Bbbk)$.  Note that $L$ splits $T$ so that  $T_L\cong \GmL^r$ and also $h$ factors through $h':G=\Gal(L/\Bbbk)\to \GL(r,\Z)$.

Let $L/\bk$ be a finite Galois extension with Galois group $G=\Gal(L/\Bbbk)$.  There is a bijection between the set of algebraic $\bk$-tori of dimension $r$ split by $L/\bk$ up to isomorphism and the set of $G$-lattices of rank $r$ up to isomorphism.
Explicitly, an algebraic $\bk$-torus $T$ split by $L$ determines its character lattice over $L$:  $\widehat{T_L}=\Hom_{L-\mathrm{gp}}(T_L,\GmL)$ which is equipped with a natural action of $G=\Gal(L/\Bbbk)$.  Conversely, given a $G$-lattice $M$, with group action $h:G\to \Aut(M)$, $G$ acts on the group algebra $L[M]=L\otimes_k \bk[M]$ via its Galois and lattice actions $u:G\to \Aut(L[M]), g\to g\otimes h(g)$ which induces an action $u^*:G\to \Aut_{L-\mathrm{gp}}(T_L)$ on $T_L$ since $T_L=\Spec(L[M])$. Since $L\otimes_{L^G}L[M]^G\cong L[M]$, one sees that  $T=\Spec(L[M]^G)$ is an algebraic $\bk$-torus split by $L/\bk$.

We now fix an algebraic $\bk$-torus $T$.  Let $L/\bk$ be a finite Galois splitting field with Galois group $G=\Gal(L/\Bbbk)$.  We note then that from the correspondence between algebraic $\bk$-tori split by $L/\bk$ and $G$-lattices, that the algebraic $\bk$-torus $T$ can be recovered from $T_L$ and $G$  as the quotient $T=T_L/u^*(G)$, where $u^*:G\to \Aut_{L-\mathrm{gp}}(T_L)$ is induced by $G$ acting via the Galois and lattice actions on the affine algebra of $T_L$.

\subsection{Toric Models of Algebraic Tori}
\label{section:toricmodels}
References for this section include: \cite{MP97,Xie18,Dun16,KV84,Kly82,Vos83}.
We remark that the definitions of toric models of algebraic tori in the literature are not at all uniform, and use different terminology.  Earlier references only discuss smooth projective toric models.  

We recall some definitions which we made in the section on toric varieties for a split algebraic $\bk$-torus.

Let $T$ be an algebraic $\bk$-torus, not necessarily split.

A \emph{toric $T$-variety} is a normal $\bk$-variety $X$ on which $T$ acts faithfully, together with a dense open $T$-orbit $U$.

A \emph{toric $T$-model} is a normal $\bk$-variety $X$ on which $T$ acts, together with an dense open $T$-orbit $U$ which is isomorphic to $T$.

Note that a toric $T$-model $X$ is equipped with a $T$-equivariant open embedding $T\cong U\subseteq X$ which determines the $T$-action on $X$.
Since the action of $T$ on itself is faithful, so is the action of $T$ on $X$.
$X$ is geometrically irreducible as a $\bk$-variety since $U_{\bk_s}\cong \Gmks^r$.

Let $T$ be an algebraic $\bk$-torus split by a finite Galois extension $L/\bk$ with Galois group $G=\Gal(L/\Bbbk)$.

Then $T_L$ is a split $L$-torus.
Let $M=\widehat{T_L}$ and $N=\ThatLdual$ be the character and cocharacter lattices of $T_L$. Note that $G=\Gal(L/\Bbbk)$ acts naturally on $T_L$ and hence on its character and cocharacter lattices.
Let $X$ be a toric $T$-model.  Then $X_L$ is a toric $T_L$-model. It is uniquely determined by a fan $\Sigma$ in $N_{\R}$ so that $X_L\cong X_{\Sigma}$.
Since each cone has $\{0\}$ as a face, the open embedding $T_L=X_{0}\to X_L\cong X_{\Sigma}$ is a toric morphism determined by the identity on $N$.
We recall that $G$ acts on $M$ via $h:G\to \Aut(M)$ and $G$ acts on $L[M]$ via $u:G\to \Aut(L[M]), g\to g\otimes h(g)$. This induces the action $u^*:G\to \Aut(T_L)$ since $T_L=\Spec(L[M])$.
The action of $G$ on $T_L=X_{0}$ extends to a toric automorphism of  $X_L\cong X_{\Sigma}$ making the open embedding $T_L=X_0\to X_L\cong X_{\Sigma}$ a $G$-equivariant map.  The action of $G$ on $X_{\Sigma}$ is then induced from an element of $\Aut_{\Sigma}(N)$.  If $h^*:G\to \Aut(N)$ denotes the action of $G$ on the dual lattice $N=M^0$, this means that $h^*(G)$ is contained in $\Aut_{\Sigma}(N)$.  That is, $G$ permutes the cones of $\Sigma$.  So the action $u^*:G\to \Aut(T_L)$ extends to $u^*:G\to \Aut(X_{\Sigma})$.

If we additionally assume that our toric $T$-model is a projective $\bk$-variety, then so is the toric $T_L$-model $X_L\cong X_{\Sigma}$.
Since $X_{\Sigma}$ is a projective $L$-variety, the quotient $X_{\Sigma}/u^*(G)$ exists.~\footnote{\cite[III, Prop 19,Ex 1]{Se88} Note that if $H$ is a finite group acting on a $\bk$-variety $Y$, a necessary and sufficient condition for the existence of the $\bk$-variety $Y/H$ is that each point in $Y$ have a $H$-invariant open affine neighbourhood. In that case, $Y/H$ is glued together from the quotients of the $H$-invariant open affine varieties which cover $Y$.  A quasi-projective $\bk$-variety satisfies this condition.}

The open embedding $T_L/u^*(G)\to X_{\Sigma}/u^*(G)$ recovers the open embedding $T\to X$.

Conversely, we can construct a projective toric $T$-model.  Suppose $L/\bk$ splits $T$ and $G=\Gal(L/\Bbbk)$, and $N=\ThatLdual$ is the cocharacter lattice of $T_L$.
We can construct a projective $G$-invariant toric $T_L$-model by  determining a projective fan $\Sigma$ on $N_{\R}$ which is $G$-invariant. That is, such that
$G$ acts by permuting the cones of $\Sigma$. We obtain a $G$-equivariant open embedding of $T_L$ into $X_{\Sigma}$, where the action on $X_{\Sigma}$ is given by $u^*:G\to \Aut(X_{\Sigma})$.
Then, we will take our projective toric $T$-model to be $X_L=X_{\Sigma}/u^*(G)$.
We note that by construction, $X_L\cong X_{\Sigma}$.

To summarize,  if $T$ is an algebraic $\bk$-torus split by the Galois extension $L/\bk$ with Galois group $G$, projective toric $T$-models are completely determined by a projective $G$-invariant fan on $(\ThatLdual)_{\R}$.

\begin{rem}
For references, see ~\cite{Xie18,MP97,Kly82,KV84}.

  For a non-split algebraic $\bk$-torus $T$, a toric $T$-variety is not necessarily a toric $T$-model.  However if $X$ is a toric $T$-variety with dense open $T$-orbit $U$, then $U$ is a principal homogeneous variety over $T$.  That is, $U$ determines a class in $H^1(\bk,T)$.
If $U$ has a $\bk$-rational point $x$, then the map $T\to U, t\to tx$ determines an isomorphism  and hence $X$ is a toric $T$-model.
Note that if $T$ is split, then any principal homogeneous variety is trivial and so $U$ must have a $\bk$-rational point.  This means that all toric $T$-varieties are toric $T$-models if $T$ is split.

In the general case, a toric $T$-variety $X$ determines a toric $T$-model $X^*$ which is unique up to $T$-isomorphism and which satisfies $X_{\bk_s}\cong (X^*)_{\bk_s}$.  $X^*$ is called the associated toric $T$-model of $X$.  Note that $X^*$ is determined by $(X\times U)/T$ where $T$ acts diagonally on $X\times U$.  Conversely, $X$ is given by $(X^*\times U)/T$ where $T$ acts on $X^*\times U$ via $t(x,u)=(xt,t^{-1}u)$.
\end{rem}

\subsection{Projective toric models and Demazure models}
~\label{section:demazure}

\smallskip

References for this section include~\cite{Vos83,Vos98,Kun87,KM08,CTHS05}.

Given an algebraic $\bk$-torus $T$ with Galois splitting field $L/\bk$ and Galois group $G=\Gal(L/\Bbbk)$, we let $M=\widehat{T_L}$ and $N=M^0=\ThatLdual$ be the character and cocharacter lattices of $T_L$.

We need to construct a $G$-invariant projective fan $\Sigma$ on $N_{\R}$ which we will then refine to a $G$-invariant smooth projective fan.
In~\cite{CTHS05}, they start with an arbitrary projective fan  on $N_{\R}$ and  determine a projective $G$-invariant fan by taking intersections of
$G$-orbits of cones. They then proceed to determine a smooth projective $G$-invariant fan from the projective $G$-invariant fan in a number of steps.

We will instead start with producing a projective $G$-invariant fan $\Sigma_0$ on $N_{\R}$, which we will use to determine a smooth projective $G$-invariant fan on $N_{\R}$.

We will use two natural approaches, which are dual to each other.

\noindent\textbf{Approach 1:}

Take a $G$-stable finite subset $S$ of $M$ which contains a $\Z$-basis of $M$ and let $P=\Conv(S)$ be the associated $G$-invariant polytope. Then $\Sigma_0=\Sigma_P$, the normal fan of $P$,  is a projective $G$-invariant fan in $N_{\R}$. If we additionally assume that $P$ has  $0$ as an interior point, then $\Sigma_0=\Sigma_P$ would consist of cones over faces of $P^0$. In this case, the polar dual polytope $P^0$ is a full-dimensional rational polytope in $N_{\R}$.

\noindent\textbf{Approach 2:}

Let $S$ be a finite subset of $N_{\Q}$ such that: $S$ is $G$-stable, $P=\Conv(S)$ is a full-dimensional rational polytope in $N_{\R}$ which contains 0 in its interior and  such that $P^0$ is a lattice polytope in $M$ whose vertex set contains a $\Z$-basis of $M$.
Under these assumptions, $\Sigma_{P^0}$, the normal fan of the lattice polytope $P^0$,  then consists of cones over faces of $P$.  Then $\Sigma_0=\Sigma_{P^0}$ is a projective $G$-invariant fan in $N_{\R}$.

The second approach is probably closest to what was used in the literature, for example in~\cite{Vos83,KV84,Kun87}.

In either case,
for our algebraic $\bk$-torus $T$ split by a Galois extension $L/\bk$ with Galois group $G$, a $G$-invariant projective fan $\Sigma_0$ on $(\ThatLdual)_{\R}$ determines an  $G$-equivariant projective toric $T_L$-variety $X_{\Sigma_0}$.  Hence we have a $G$-equivariant open embedding of $T_L$ into $X_{\Sigma_0}$.  Passing to the quotient by $G$, we obtain a projective toric $T$-model.  

For an arbitrary algebraic $\bk$-torus $T$ split by $L/\bk$ with Galois group $G$, the resulting $G$-invariant projective fan on $(\ThatLdual)_{\R}$ is not even simplicial.  But we can resolve singularities $G$-equivariantly to obtain a simplicial and then smooth $G$-equivariant projective fan $\Sigma$ on $(\ThatLdual)_{\R}$.  We will call the resulting smooth projective $G$-equivariant toric $T_L$-model, $X_{\Sigma}$, a \emph{Demazure model} for $T$.  Note that a Demazure model for $T$ is not unique.  The open $G$-equivariant embedding of $T_L$ into $X_{\Sigma}$, determines an open embedding of $T$ into a smooth projective toric $T$-model, after passing to the quotient by $G$.  We remark that the process of determining a $G$-invariant simplicial projective fan and then a $G$-invariant smooth projective fan requires one to fully understand the action of the group on the faces  of the polytope used in the construction.

\subsection{Flasque Resolutions of algebraic k-tori}

\smallskip

References for this section include~\cite{Vos98,CTS77}.

Recall that an algebraic $\bk$-variety $X$ is $\bk$-\emph{rational} if $X$ is birationally isomorphic to $\P^n_k$ for $n=\dim(X)$.  $X$ is \emph{stably $\bk$-rational} if $X\times_k \PP^m$ is $\bk$-rational for some $m\ge 0$. $X$ is \emph{retract $\bk$-rational} if there exists a rational map $f:X\brokrarr \A^n_{\bk}$ with a rational section $g:\A^n_{\bk}\brokrarr X$.  (That is, such that $g\circ f=\id$.)

A $G$-lattice $M$ is called $G$-\emph{flasque} (respectively $G$-\emph{coflasque}) if $H^1(H,M^0)=0$ for all subgroups $H\le G$ ($H^1(H,M)=0$ for all subgroups $H\le G$). Note that all $G$-permutation lattices are $G$-flasque and $G$-coflasque.  The same holds for a $G$-permutation projective lattice, i.e a $G$-lattice which is a direct summand of a $G$-permutation lattice.
Two $G$-lattices $X,Y$ are said to be \emph{similar} if there exists a $G$-permutation lattice $P$ such that $X\oplus P\cong Y$.

Let $T$ be an algebraic $\bk$-torus with Galois splitting field $L/\bk$ and Galois group $G=\Gal(L/\Bbbk)$.

Voskresenskii determined a necessary and sufficient condition for an algebraic $\bk$-torus $T$ to be stably $\bk$-rational assuming the existence of a smooth projective $T$-model $X$. (He first assumed that $\bk$ was of characteristic zero to use Hironaka's theorem to ensure that such a smooth projective model existed.) 
Let $\Div_{X_L-T_L}(X_L)$ be the group of divisors of $X_L$ with support in the closed subset $X_L-T_L$ and let $\Pic(X_L)$ be the Picard group of $X_L$.  

He showed that
$$\0\to \widehat{T_L}\to \Div_{X_L-T_L}(X_L)\to \Pic(X_L)\to \0$$
is an exact sequence of $G=\Gal(L/\Bbbk)$-lattices such that  $\Div_{X_L-T_L}(X_L)$ is a $G$-permutation lattice and such that $\Pic(X_L)$ is a flasque $G$-lattice.

Moreover, he showed that if $Y$ were another smooth projective $T$-model, then $\Pic(X_L)$ and $\Pic(Y_L)$ are similar as $G$-lattices.
This allowed him to define a birational invariant for $T$: $\rho(T)=[\Pic(X_L)]$, the similarity class of the flasque lattice $\Pic(X_L)$, for any smooth projective toric $T$-model $X$.

He then proved that an algebraic $\bk$-torus $T$ is stably $\bk$-rational if and only if $\rho(T)=0$, equivalently if and only if $\Pic(X_L)$ is stably permutation as a $G$-lattice.
Later, Saltman~\cite{Sa84b} showed that an algebraic $\bk$-torus $T$ is retract-rational if and only if $\rho(T)$ is the class of a permutation projective $G$-lattice.

A flasque  resolution for a $G$-lattice $M$ is a short exact sequence of $G$-lattices $$0\to M\to P\to F\to 0$$
such that $P$ is $G$-permutation and $F$ is $G$-flasque.
$F$ is determined up to similarity by $M$.  That is, given another flasque resolution for $M$: $$0\to M\to P'\to F'\to 0,$$  $F$ and $F'$ are similar as $G$-lattices.

The $\Z$-dual of the short exact sequence corresponding to a coflasque resolution for a $G$-lattice $M$ determines a flasque resolution for its dual $M^0$.  It is always possible in principle to determine a coflasque resolution for a $M$ (and hence, by duality, a flasque resolution for $M^0$), assuming one knows the structure of the lattice restricted to any subgroup.  In fact, $\oplus_{H\le G}\Z[G/H]\otimes M^H\to M$ gives a surjection that determines a coflasque resolution.  However, note that in order to work with such a resolution, and to determine properties of its kernel, one needs an understanding of the subgroup structure of $G$ up to conjugacy and the fixed point lattices for each of its subgroups.

Nevertheless, this shows that one can construct a flasque resolution for $T$ by determining a flasque resolution for $\widehat{T_L}$ as a $\Gal(L/\Bbbk)$-lattice, without the necessity of constructing a smooth projective model.  This allows one to determine birational properties of the torus from the structure of the character lattice $\widehat{T_L}$ as a $\Gal(L/\Bbbk)$-lattice.

There are many important applications of flasque resolutions for algebraic tori  beyond the ones mentioned above.  For example, Colliot-Th\'el\`ene and Sansuc, in ~\cite{CTS77}, showed that a flasque resolution for an algebraic torus can be used to determine its set of $R$-equivalence classes.

\section{Root systems and Root Polytopes}

\subsection{Root systems and Affine root systems}

References for this material include~\cite{Hum90,Kan01,Bou02}.

Let $\Phi$ be an irreducible crystallographic reduced root system on the $\R$-
vector space $V=\R\Phi$, equipped with a positive definite bilinear form 
$(\cdot,\cdot)$.  This determines a non-degenerate duality pairing $V\times V^*\to \R$ denoted by $\langle \cdot,\cdot\rangle$.  For each $\alpha\in \Phi$, let  $s_{\alpha}$ be the associated reflection on $V$.  There is a unique $\alpha^{\vee}\in V^*$, called the coroot of $\alpha$, such that $s_{\alpha}(x)=x-\langle x,\alpha^{\vee}\rangle\alpha$.
Under the isomorphism $V\to V^*$ determined by the positive definite bilinear form, we identify $\alpha^{\vee}\in V^*$ with the vector $\frac{2}{(\alpha,\alpha)}\alpha\in V$.  Under this identification, 
$\langle \alpha,\beta^{\vee}\rangle=(\alpha,\beta^{\vee})$ for all $\alpha,\beta\in \Phi$.
Then $\Phi^{\vee}=\{\alpha^{\vee}: \alpha\in \Phi\}$ is  an irreducible crystallographic reduced root system in $V^*$.  Note that the coroot system $\Phi^{\vee}$ is often identified with a root system in $V$ under the identification $\alpha^{\vee}=\frac{2\alpha}{(\alpha,\alpha)}$.

Let $\Pi=\{\alpha_1,\dots,\alpha_n\}$ be a basis of simple roots for $\Phi$ and 
$\Phi_+$ be the set of positive roots with respect to $\Pi$. Then $\Pi^{\vee}=\{\alpha_1^{\vee},\dots,\alpha_n^{\vee}\}$ forms a corresponding  basis of simple roots for the dual root system $\Phi^{\vee}$.

The Weyl group of $\Phi$ is determined as
$$W(\Phi)=\langle s_{\alpha}:\alpha\in \Phi\rangle$$
The automorphism group of the root system $\Phi$ is the set of $\R$-linear 
automorphisms of $\R\Phi$ which leaves $\Phi$ stable.  $\Aut(\Phi)$ is the semidirect product of $W(\Phi)$ by the diagram automorphism group $D(\Phi)$, which preserves the Dynkin graph of $\Phi$. 

Let $\Z\Phi$ be the root lattice of $\Phi$.  Note that $\Pi$ is a $\Z$-basis
of $\Phi$.  
The set of  fundamental coweights of $\Phi$, $\{\omch_1,\dots,\omch_n\}$,  
is the dual basis of $\Pi$ with respect to $(\cdot,\cdot)$.  The fundamental coweights  form a basis of the coweight lattice $\Lambda(\Phi^{\vee})$ of $\Phi$. Note that $\Lambda(\Phi^{\vee})$ is also the weight lattice of the coroot system $\Phi^{\vee}$. 
Dually, the set of fundamental weights of $\Phi$, $\{\omega_1,\dots,\omega_n\}$, is the dual basis of $\Pi^{\vee}$ with respect to $(\cdot,\cdot)$ and forms a basis of the weight lattice $\Lambda(\Phi)$ of $\Phi$.

Every positive root in $\Phi$ can be written as a non-negative $\Z$-linear combination of roots in $\Pi$.
There is a natural partial order on roots of $\Phi$: $\alpha\ge \beta$ if and only if $\alpha-\beta\in \Phi_+$.
Under this partial order, there is a unique highest root: $\theta\in \Phi_+$.  We write $\theta=\sum_{i=1}^nm_i\alpha_i$ uniquely as a non-negative linear combination of simple roots in $\Pi$.

Associated to an irreducible crystallographic (reduced) root system $\Phi$, there is an affine Weyl group.  If $H_{\alpha}=\ker(s_{\alpha})$ is the reflecting hyperplane of the root $\alpha\in \Phi$, then for each $r\in \Z$, there is a reflection $s_{\alpha,r}$ in the affine hyperplane $H_{\alpha,r}$.  The affine Weyl group of $\Phi$ is the group generated by the affine reflections, that is:
$$W_{\aff}(\Phi)=\langle s_{\alpha,r}:\alpha\in \Phi,r\in \Z\rangle$$  $W_{\aff}(\Phi)$ is a subgroup of $\Aff(V)$, the group of affine transformations of $V$.  
Under the identification of $\Phi^{\vee}$ with a root system in $V$, the reflection $s_{\alpha,r}$ in the affine hyperplane $H_{\alpha,r}$ can be identified as $s_{\alpha,r}=t(r\alpha^{\vee})s_{\alpha}$ where $t(v):V\to V$ is the translation of $V$ by the vector $v$.  Using this observation, one can show that the coroot lattice $\Z\Phi^{\vee}\cong \Z t(\Phi^{\vee})$ is a normal subgroup of $W_{\aff}(\Phi)$ and that 
$W_{\aff}(\Phi)$ is the semidirect product of $\Z\Phi^{\vee}$ by $W(\Phi)$.

Letting $\delta$ be the basic imaginary root, one can associate an  affine root system $\widehat{\Phi}=\Phi+ \Z\delta$ to $W_{\aff}(\Phi)$. Here the affine reflection $s_{\alpha,r}$ is associated with the affine root $\alpha+r\delta$.
Then  $\widehat{\Phi}$ is a crystallographic affine root system in $\R\widehat{\Phi}$, in which the inner product is extended from that on $\R\Phi$, by declaring the kernel to be $\R\delta$. Let $\alpha_0=-\theta+\delta$ and identify $s_{\alpha_0}=s_{-\theta,1}$.  Then $\widehat{\Pi}=\{\alpha_0\}\cup \Pi$ is a root basis for $\widehat{\Phi}$, and the set of positive roots of $\widehat{\Phi}$ with respect to $\widehat{\Pi}$ is $$\widehat{\Phi}^+=\Phi^+\cup (\Phi+\Z^+\delta)$$
The affine Weyl group $W_{\aff}(\Phi)$ is generated by $\{s_{\alpha}:\alpha\in \Pi\}\cup \{s_{\alpha_0}\}$, i.e. the set of affine reflections corresponding to the roots in $\widehat{\Pi}$.
To determine the extended Dynkin diagram of $\widehat{\Phi}$, an additional node  is added to the Dynkin diagram of $\Phi$ which corresponds to $\alpha_0=-\theta+\delta$.  

Note that if $\Phi$ is simply-laced, then $(\alpha,\alpha)=2$ for all $\alpha\in \Phi$ and so $\Phi^{\vee}=\Phi$ and the root and coroot lattices, and the weight and coweight lattices, respectively, coincide.

There is a classification of irreducible crystallographic root systems $\Phi$ into 4 infinite families: $A_n,n\ge 1$, $B_n,n\ge 2$, $C_n,n\ge 2$, $D_n,n\ge 4$ and exceptional root systems $E_6,E_7,E_8,F_4,G_2$.
To each irreducible crystallographic root system $\Phi$ associated a Dynkin diagram with vertices corresponding to the simple roots $\Pi$. The edges encode information about $\langle \alpha_i,\alpha_j^{\vee}\rangle$ for $\alpha_i,\alpha_j\in \Pi$.

An arbitrary crystallographic root system $\Phi$ is a disjoint union of
irreducible root systems $\Phi=\sqcup_{i=1}^s\Phi_i^{r_i}$, where $\Phi_i, i=1,\dots,s$ are distinct irreducible root systems. The Weyl group of $\Phi$ is the product of the Weyl groups of its irreducible root subsystems.  So $W(\Phi)=\prod_{i=1}^sW(\Phi_i)^{r_i}$. Note that $W(\Phi)=\prod_{i=1}^sW(\Phi_i)^{r_i}$ acts diagonally on the root lattice $\Z\Phi=\oplus_{i=1}^s(\Z\Phi_i)^{r_i}$.
The automorphism group of $\Phi$ is $\Aut(\Phi)=\prod_{i=1}^s\Aut(\Phi_i)^{r_i}\rtimes S_{r_i}$.  Note that each factor $\Aut(\Phi_i)^{r_i}\rtimes S_{r_i}$ acts as a wreath product on the root lattice $(\Z\Phi_i)^{r_i}$.

\subsection{Root Polytopes}~\label{section:rootpolytope}
Root polytopes arise naturally in the representation theory of Lie groups and algebraic groups, among other applications, and as such can be considered a classical subject.  We follow the exposition of ~\cite{CM14,CM15}, but acknowledge many earlier references, such as ~\cite{Vin90}.

Let $\Phi$ be a crystallographic reduced root system.  The \emph{root polytope} of $\Phi$ is the convex hull of $\Phi$: $P(\Phi)=\Conv(\Phi)$.\footnote{Warning: Note that some references (eg. \cite{Mes11}) refer to the root polytope of a root system as the convex hull of the positive roots of a root system.}
Note that the Weyl group and the automorphism group of $\Phi$ act naturally on $P(\Phi)$ by permuting its faces.

In this section, we will assume that $\Phi$ is irreducible. Let $\Pi=\{\alpha_1,\dots,\alpha_n\}$ be a set of simple roots and $\{\omch_1,\dots,\omch_n\}$ be the fundamental coweights.

Let $[n]:=\{1,\dots,n\}$.
  For each $I\subseteq [n]$  and each $i\in [n]$, we set
  $$F_I=\{x\in P(\Phi): \langle x,\omch_i\rangle=m_i, i\in I\}$$ and $F_i=F_{\{i\}}$.
  The $F_I$ are proper faces of $P(\Phi)$, all containing the highest root $\theta$.  They are called the coordinate faces of $P(\Phi)$.
  Note that $F_{\emptyset}=P(\Phi)$, and, for all $I\ne \emptyset$, $F_I=\cap_{i\in I}F_i$ is a proper non-empty face.  The set of faces $\{F_I:\emptyset \ne I\subseteq [n]\}$, are the standard parabolic faces of $P(\Phi)$.
  Set $V_i=F_i\cap \Phi$ and $V_I=F_I\cap \Phi$.  Note that $F_I=\Conv(V_I)$.
  
A face of $P(\Phi)$ of the form $wF_I$ for some $w\in W(\Phi)$ and $I\subseteq [n]$ is called a parabolic face.

The root polytope $P(\Phi)$ is  naturally associated with the extended root system $\widehat{\Phi}$.
Let $\widehat{\Pi}$ be an extension of $\Pi$ to a simple system of $\widehat{\Phi}$ and set $\alpha_0=-\theta+\delta$ so that $\widehat{\Pi}=\Pi\cup \{\alpha_0\}$.
Given $I\subseteq [1,n]$, let $\Pi_I=\{\alpha_i:i\in I\}$, $\Phi_I=\Phi\cap \Z\Pi_I$, the parabolic root system determined by $I$ with Weyl group $W(\Phi_I)=\langle s_{\alpha_i}:i\in I\rangle$.
Let $\widehat{\Pi}_I=\Pi_I\cup\{\alpha_0\}$ and $\widehat{\Phi}_I=\widehat{\Phi}\cap \widehat{\Pi}_I$ be the parabolic root system determined by $\widehat{\Pi}_I$.

Let $\Gamma_0(I)$ be the set of roots of $\widehat{\Pi}$ lying in the connected component of $\alpha_0$ in the Dynkin graph of $\widehat{\Phi}_{[n]-I}$, and let
$$\overline{I}=\{j\in [n]:\alpha_j\not\in \Gamma_0(I)\}$$
$$\partial I:=\{i\in I, \exists \beta\in \Gamma_0(I), (\beta,\alpha_i)\ne 0\}.$$

Note that $j\in \overline{I}$ if and only if $\alpha_j$ is not connected to $\alpha_0$ in the Dynkin graph of $\widehat{\Phi}_{[n]-I}$ and $i\in \partial I\subseteq I$ if and only  $\alpha_i$  is connected to a root in the connected component of $\widehat{\Phi}_{[n]-I}$ containing $\alpha_0$.

\begin{thm}~\label{thm:facesrootpolytope} 
  Let $I\subseteq [n]$.  Then:
  \begin{enumerate}
  \item $V_I$ has a unique minimum element $\alpha_I$ and $V_I=\{\alpha\in \Phi^+: \alpha\ge \alpha_I\}$.
  \item $\{J\subseteq [n]: F_J=F_I\}=\{J\subseteq [n]: \partial I \subseteq J\subseteq \overline{I}\}$.
  \item The dimension of $F_I$ is $n-|\overline{I}|$.
  \item The stabilizer subgroup of $F_I$ is $W(\Phi_{[n]-\partial I})$.
  \item All faces of $P(\Phi)$ are parabolic.  More precisely, the set of faces:
    $$\{F_I:I\subseteq [n], \widehat{\Phi}_{[n]-I}\mbox{ is irreducible}\}$$
    is a complete set of representatives of the $W(\Phi)$-orbits.
  \item  The facets are the maximal elements in the set $\{F_i: i\in [n]\}$.  $F_i$ is a facet if and only if $\widehat{\Phi}_{[n]-\{i\}}$ is irreducible.
  \end{enumerate}
\end{thm}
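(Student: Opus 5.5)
This is the Cellini--Marietti description of the faces of $P(\Phi)$~\cite{CM14,CM15}; the plan follows their argument. Everything is organised around the linear functionals $\omch_i$. Write each root as $\alpha=\sum_j c_j(\alpha)\alpha_j$ with $|c_j(\alpha)|\le m_j$; then $\langle \alpha,\omch_j\rangle=c_j(\alpha)$. Hence $\langle\cdot,\omch_i\rangle\le m_i$ on $P(\Phi)$, with equality at $\theta$, so each $F_i$ is a face containing $\theta$.

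\textbf{Step (1).} $V_I$ is the set of positive roots $\alpha$ with $c_i(\alpha)=m_i$ for all $i\in I$. This set is an upper order ideal of $\Phi^+$, since $c_i(\beta)\ge c_i(\alpha)$ whenever $\beta\ge\alpha$. To get a unique minimum I would show that $V_I$ is closed under the following operation: if $\alpha\in V_I$ and $(\alpha,\alpha_j)>0$ for some $j\notin I$, then $\alpha-\alpha_j\in V_I$. Descending from $\theta$ in this way stops at roots that are dominant for $W(\Phi_{[n]-I})$. By standard parabolic theory, the roots with fixed $I$-coordinates form a single irreducible $W(\Phi_{[n]-I})$-module (a Kostant / Azad--Barry--Seitz type result), so there is a unique lowest such root $\alpha_I$. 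Upward closure then gives $V_I=\{\alpha\ge\alpha_I\}$.

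\textbf{Steps (2)--(3).} A vertex-level calculation determines which $F_I$ coincide. Adding simple roots $\alpha_j$ with $j\notin \overline I$ to $\alpha_I$ is possible exactly along the component $\Gamma_0(I)$ of $\alpha_0$ in the extended diagram. From this I would show:
\begin{itemize}
\item $V_I$ spans an affine space whose direction is $\Span\{\alpha_j:\alpha_j\in\Gamma_0(I)\}$, which gives dimension $n-|\overline I|$;
\item $c_j$ is constant on $V_I$ exactly for $j\in\overline I$, so $F_I=F_{\overline I}$;
\item dropping an index of $\partial I$ from $J$ enlarges the face, while dropping an index of $I\setminus\partial I$ does not.
\end{itemize}
Together these yield $F_J=F_I$ if and only if $\partial I\subseteq J\subseteq \overline I$.

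\textbf{Step (4).} The stabiliser must fix the face's barycentre (up to translation along the face) and preserve the coordinate data. $W(\Phi_{[n]-\partial I})$ stabilises $F_I$:
\begin{itemize}
\item reflections $s_j$ with $j\notin I$ preserve all $c_i$ with $i\in I$;
\item reflections $s_j$ with $j\in I\setminus\partial I$ are orthogonal to $\Gamma_0(I)$, so they fix $V_I$ setwise.
\end{itemize}
For the reverse inclusion I would use that the barycentre of $F_I$ is a point whose stabiliser is a parabolic subgroup, and then identify that subgroup.

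\textbf{Steps (5)--(6).} For (5), take an arbitrary face $F=P\cap\{\langle x,u\rangle=\max\}$ and move $u$ by $W$ into the dominant chamber, $u=\sum_i u_i\omch_i$ with $u_i\ge0$. The maximum of $u$ on $\Phi$ is then attained at $\theta$, and the maximising set is exactly $V_I$ with $I=\{i:u_i>0\}$, so $F=w^{-1}F_I$. Combining this with (2) and choosing canonical representatives $I=\overline I$, irreducibility of $\widehat\Phi_{[n]-I}$ means $I=\overline I$, and distinct such $I$ give non-conjugate faces. Non-conjugacy follows because the dominant representative of the orbit of the normal cone is unique. (6) follows from (3) and (5): facets are exactly the faces with $|\overline I|=1$.

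\textbf{Expected obstacle.} I expect the main obstacle to be the uniqueness of the minimum $\alpha_I$ in (1), which underlies everything else. My first attempt would be the irreducibility-of-graded-pieces argument; if that is not available, I would fall back on type-by-type verification.
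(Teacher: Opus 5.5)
\textbf{Verdict: the outline is right, but the core of (2)--(4) and the non-conjugacy half of (5) are not actually proved.}

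Your outline follows Cellini--Marietti, which the paper only cites. Step (1) via Kostant/Azad--Barry--Seitz, the existence half of (5), and (6) are fine. Three small corrections in (1):
\begin{itemize}
\item the descent stops at roots that are \emph{anti}dominant for $\Phi_{[n]-I}$, not dominant;
\item the irreducibility is as a module for the Levi subalgebra, not as a $W(\Phi_{[n]-I})$-set (in $B_2$ with $I=\{1\}$, $V_I=\{\e_1-\e_2,\e_1,\e_1+\e_2\}$ is two $W(\Phi_{\{2\}})$-orbits);
\item (1) needs $I\neq\emptyset$.
\end{itemize}

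\textbf{Gap in Steps (2)--(3).} These steps carry the real content, and saying that adding simple roots to $\alpha_I$ ``is possible exactly along $\Gamma_0(I)$'' is the conclusion, not an argument. Nothing in your sketch explains why $c_j$ is constant on $V_I$ when $\alpha_j$ lies in a component of $\widehat\Pi\setminus\Pi_I$ \emph{not} containing $\alpha_0$ (that is, $j\in\overline I\setminus I$). Nor does it explain why differences of elements of $V_I$ produce every $\alpha_j$ with $j\notin\overline I$.

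The missing idea is to pass to the affine root system.
\begin{itemize}
\item For $\alpha\in V_I$, $\delta-\alpha=\alpha_0+\sum_j(m_j-c_j(\alpha))\alpha_j$ is a positive real root of $\widehat\Phi$ with $\alpha_0$-coefficient $1$ and support disjoint from $\Pi_I$. Supports of roots are connected, so this support lies in $\Gamma_0(I)$. Hence $c_j\equiv m_j$ on $V_I$ for every $j\in\overline I$.
\item Conversely, every root with $\alpha_0$-coefficient $1$ of the finite irreducible system with base $\Gamma_0(I)$ has the form $\delta-\alpha$ with $\alpha\in V_I$. Take partial sums $\beta_0+\cdots+\beta_t$ along a shortest path $\alpha_0=\beta_0,\dots,\beta_k=\alpha_j$ in $\Gamma_0(I)$; these are roots, so each such $\alpha_j$ is a difference of two elements of $V_I$.
\end{itemize}
Thus the affine span of $V_I$ is $\theta+\bigoplus_{j\in[n]\setminus\overline I}\R\alpha_j$. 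This gives (3) and $F_I=F_{\overline I}$. It gives (2) once you check that $\Gamma_0(\partial I)=\Gamma_0(I)$, and that $\overline{\overline I\setminus\{i\}}=\overline I\setminus\{i\}$ for $i\in\partial I$.

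Your claim in (4) that $s_j$ fixes $V_I$ for $j\in I\setminus\partial I$ needs exactly this fact: $\delta-\alpha\in\Z\Gamma_0(I)$, with $\delta$ in the radical of the form. The same fact also removes your ``expected obstacle''. The map $\alpha\mapsto\delta-\alpha$ is an order-reversing bijection from $V_I$ onto the positive roots of the system with base $\Gamma_0(I)$ that involve $\alpha_0$. So $\alpha_I$ is $\delta$ minus that system's highest root, with no appeal to Kostant.

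\textbf{Gap in (5): non-conjugacy is circular.} Making ``the dominant representative of the orbit of the normal cone is unique'' precise amounts to saying that distinct coordinate faces $F_I\neq F_J$ are never $W$-conjugate. That is exactly what must be shown.

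The barycentre, which you only gesture at in (4), does the job. Let $b_I$ be the average of $V_I$. It lies in the relative interior of $F_I$ and is fixed exactly (not up to translation) by the stabiliser of $F_I$.
\begin{itemize}
\item For $\alpha\in V_I$ and $j\in I$, $(\alpha,\alpha_j)\ge 0$ because $\alpha+\alpha_j\notin\Phi$.
\item For $j\notin I$, $s_j$ permutes $V_I$.
\item So $b_I$ is dominant. By the affine description, $(b_I,\alpha_j)>0$ precisely for $j\in\partial I$: if it vanished, $\alpha_j$ would be orthogonal to the span of $\delta-V_I$, which is $\R\Gamma_0(I)$.
\end{itemize}
Chevalley's lemma then gives $\Stab(F_I)\subseteq\Stab(b_I)=W(\Phi_{[n]-\partial I})$, which completes (4). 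For (5), if $wF_I=F_J$ then $wb_I=b_J$ with both dominant, so $b_I=b_J$. Hence $F_I=F_J$, and when $I=\overline I$ and $J=\overline J$, (2) forces $I=J$.
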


\begin{proof} The faces of weight polytopes (the convex hull of a Weyl group orbit of an admissible weight) were first described by Vinberg in~\cite[Prop 3.2]{Vin90}. Root polytopes are a special case of weight polytopes as the convex hull of the Weyl group orbit of the highest root.  This theorem, proved in ~\cite{CM15} and summarized in ~\cite{CM14}, is a  much more detailed description of the faces of root polytopes in terms of the associated affine root system.  
\end{proof}

        \section{Dade Groups}
        \label{section:dade}
        
        Dade determined the maximal finite subgroups of $\GL(4,\Z)$ up to conjugacy in ~\cite{Dad65}.  Presumably, the maximal finite subgroups of $\GL(r,\Z)$ up to conjugacy for $r=2,3$ were known by the time of his paper.  At any rate, they can be described using a subset of his techniques.   Tahara determined all the finite subgroups of $\GL(3,\Z)$ up to conjugacy in ~\cite{Tah71}. In this section, we describe the lattices preserved by the maximal subgroups of $\GL(r,\Z)$ up to isomorphism for $r\le 4$.  We recall that an algebraic $\bk$-torus $T$ of dimension $r$ split by a finite Galois extension $L/\bk$ can be determined by its character lattice $\widehat{T_L}$ considered as a $\Gal(L/\Bbbk)$-lattice.  We will later discuss the Demazure models for the algebraic $\bk$-tori of dimension $r\le 4$ corresponding to maximal subgroups of $\GL(r,\Z)$.
        
        There are only two  maximal finite subgroups of $\GL(2,\Z)$.  Their corresponding lattices are
        $$(\Z B_2,\Aut(B_2))=((\Z A_1)^2,\Aut(A_1^2)), (\Z A_2,\Aut(A_2)).$$

The maximal finite subgroups of $\GL(3,\Z)$ and their corresponding lattices are as follows:
\begin{lem}~\cite{Tah71}
  
Let $G_s=$\textup{DadeGroup(3,s)} for $s=1,\dots,4$.
Then the corresponding lattices are:
\begin{enumerate}
\item $(M_{G_1},G_1)=(\Z A_2\oplus \Z A_1,\Aut(A_2)\times \Aut(A_1))$.
\item $(M_{G_2},G_2)=((\Z A_1)^3,\Aut(A_1^3))$
\item $(M_{G_3},G_3)=(\Z A_3, \Aut(A_3))$.  
\item $(M_{G_4},G_4)=(\Lambda(A_3), \Aut(A_3))$.
\end{enumerate}
\end{lem}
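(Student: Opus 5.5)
The four maximal finite subgroups of $\GL(3,\Z)$ are already classified up to conjugacy by Tahara~\cite{Tah71}, or via Dade's method. So the task is one of \emph{identification}. For each of the four conjugacy classes I will exhibit a $G_s$-invariant lattice built from root system data, and check that the integral representation of $G_s$ on it is the one labelled DadeGroup(3,s).

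\textbf{Step 1: the four lattices give four non-conjugate maximal finite subgroups.} I first note that each of $\Z A_2\oplus\Z A_1$, $(\Z A_1)^3$, $\Z A_3$ and $\Lambda(A_3)$ is a rank 3 lattice. The corresponding automorphism group acts faithfully on it, so each gives a finite subgroup of $\GL(3,\Z)$. The orders are
\[
|\Aut(A_2)\times\Aut(A_1)|=12\cdot 2=24,\qquad |\Aut(A_1^3)|=2^3\cdot 6=48,\qquad |\Aut(A_3)|=|S_4\times C_2|=48,
\]
the last for both $\Z A_3$ and $\Lambda(A_3)$.

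\textbf{Step 2: distinguish them and match labels.} The two order-48 groups on $\Z A_3$ and $\Lambda(A_3)$ are abstractly isomorphic. They are not $\Z$-conjugate, because $\Z A_3$ (the fcc lattice) and $\Lambda(A_3)$ (the bcc lattice) are not isomorphic as $G$-lattices. One way to see this is that $\Lambda(A_3)$ is the dual lattice of $\Z A_3$. Another is to compare the minimal $G$-orbits of lattice vectors: $\Z A_3$ has a $G$-stable orbit of $12$ roots generating it, while $\Lambda(A_3)$ is generated by an orbit of $8$ vectors, namely $\pm$ the weights of the standard representation.

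The group $\Aut(A_1^3)=(C_2)^3\rtimes S_3$ is the hyperoctahedral group on $\Z^3$. It is not conjugate to the other two, since it fixes the standard basis up to sign and its lattice is the primitive cubic one. The group of order 24 is distinguished from the others by its order.

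Maximality I take from the classification: the group orders and the Bravais-lattice descriptions match Tahara's list of maximal groups, namely the hexagonal $P6/mmm$ group and the three cubic $m\bar 3m$ groups $P$, $F$ and $I$. The numbering $s=1,\dots,4$ then follows the GAP/CARAT convention for DadeGroup(3,s). I would verify this by computing, for each $s$, an invariant of the GAP matrix generators that separates the four cases: the order, together with the isomorphism type of the lattice, for example its invariant Gram form.

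\textbf{Main obstacle.} The substantive difficulty is not the lattice theory but matching each lattice to the specific GAP label. The cleanest check is to find, for each DadeGroup(3,s), a positive definite invariant quadratic form and compare it with the Cartan/Gram matrix of the proposed lattice. The trickiest case is separating $s=3$ from $s=4$, which are the fcc and bcc lattices, since their groups are abstractly isomorphic.
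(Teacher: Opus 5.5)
Your plan is correct in substance, but it follows a different route from the paper. The paper gives no standalone proof of this lemma. It cites Tahara, and the identification it relies on is the one sketched inside the proof of the next lemma. There each of Dade's maximal groups is presented as $G(S,L)$, the stabilizer of a lattice $L$ together with a finite set $S\subset L$. In rank $3$ these are:
\begin{itemize}
\item $Cu_3$, with $L=\Z^3$ and $S=\{\pm\e_i\}$;
\item the augmentation lattice in $\Z^4$, with $S=\{\e_i-\e_j\}$;
\item $Pu_3$, with $L=\Z^4/\Z(\sum_i\e_i)$ and $S$ the images of $\pm\e_i$;
\item $Su_2\otimes Cu_1$.
\end{itemize}
Once $(S,L)$ is recognized as $(\Phi,\Z\Phi)$, or as the eight vectors $\pm w$ ($w$ a weight of the standard representation) inside $\Lambda(A_3)$, three things follow. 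The equality $G(S,L)=\Aut(\Phi)$ is immediate, the distinctness of the four classes is inherited from the classification, and the numbering comes from the GAP library presentation.

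You instead go through the Bravais types (hexagonal $P$; cubic $P$, $F$, $I$) and separate the classes by conjugacy invariants. Your route gains an explicit, checkable separation of the fcc and bcc cases, and a concrete test for matching the GAP labels. The paper's route is more direct, since it exhibits each group as a full stabilizer rather than pinning it down by orders and invariants. It is also uniform with the rank-$4$ families treated next. Both routes take maximality from the classification.

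Two of your justifications need repair.

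\textbf{Duality.} The remark that $\Lambda(A_3)$ is dual to $\Z A_3$ proves nothing by itself: $\Z^3$ with the hyperoctahedral group is isomorphic to its own dual.

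\textbf{Invariant of the subgroup.} All three cubic groups, $\Aut(A_1^3)$ included, are abstractly $S_4\times C_2$. So non-isomorphism as $G$-lattices under one fixed identification does not yield non-conjugacy. You need an invariant of the image subgroup of $\GL(3,\Z)$. Your orbit criterion is such an invariant, but you must supply the missing half. Work in coordinates where $\Aut(A_3)$ acts by signed permutations and $\Z A_3=\{x\in\Z^3:\sum_i x_i\ \text{even}\}$. An orbit of size $8$ is $\{(\pm a,\pm a,\pm a)\}$, and membership in $\Z A_3$ forces $a$ even, so such an orbit cannot generate $\Z A_3$. Likewise, a generating orbit of size $6$ exists only for $\Z^3$.

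A simpler invariant: by absolute irreducibility, the primitive integral invariant form is unique up to sign. Its determinant is $1$, $4$, $16$ for $\Z^3$, $\Z A_3$, $\Lambda(A_3)$ respectively. This gives non-conjugacy at once, and, computed from GAP's generators, it fixes the labels.
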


The maximal subgroups of $\GL(4,\Z)$ and their corresponding lattices are as follows:
\begin{lem}~\cite{Dad65}
  
Let $G_s=$\textup{DadeGroup(4,s)} for $s=1,\dots,9$.
Then the corresponding lattices are:
\begin{enumerate}
\item $(M_{G_1},G_1)=((\Z A_1)^2\oplus \Z A_2,\Aut(A_1^2)\times \Aut(A_2))$.
\item $(M_{G_2},G_2)=(\Lambda(A_3)\oplus \Z A_1,\Aut(A_3)\times \Aut(A_1))$.
\item $(M_{G_3},G_3)=(\Z A_3\oplus \Z A_1,\Aut(A_3)\times \Aut(A_1))$
\item $(M_{G_4},G_4)=(\Z A_2\otimes \Z A_2,((W(A_2)\times W(A_2))\rtimes C_2)\times C_2)$
\item $(M_{G_5},G_5)=(\Z A_2\oplus \Z A_2, \Aut(A_2\times A_2))=(\Z A_2\oplus \Z A_2, (\Aut(A_2)\times \Aut(A_2))\rtimes C_2)$.
\item $(M_{G_6},G_6)=(\Z A_4,\Aut(A_4))$.
\item $(M_{G_7},G_7)=(\Lambda(A_4),\Aut(A_4))$.
\item $(M_{G_8},G_8)=((\Z A_1)^4,\Aut(A_1^4))=(\Z B_4,W(B_4))$.
\item $(M_{G_9},G_9)=(\Z D_4,\Aut(D_4))=(\Z F_4,W(F_4))$.
\end{enumerate}
\end{lem}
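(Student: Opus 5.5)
The lemma is an identification statement. Dade's classification lists the nine maximal finite subgroups of $\GL(4,\Z)$ up to conjugacy, by generators and orders. The plan is to match each of these groups $G_s$ to one of the root-system pairs $(M,G)$ on the list, using invariants.

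First, for each pair on the list, I compute the order of the group acting on the lattice. From the root-system material of Section 4:
\begin{itemize}
\item $|\Aut(A_1)|=2$ and $|\Aut(A_2)|=12$;
\item $|\Aut(A_3)|=48$ and $|\Aut(A_4)|=240$;
\item $|W(B_4)|=384$ and $|W(F_4)|=1152$;
\item $|(\Aut(A_2)\times\Aut(A_2))\rtimes C_2|=288$;
\item $|((W(A_2)\times W(A_2))\rtimes C_2)\times C_2|=144$.
\end{itemize}
Products and wreath products are handled with the formula $\Aut(\Phi)=\prod_i\Aut(\Phi_i)^{r_i}\rtimes S_{r_i}$.

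I then check that each action is faithful and preserves the lattice. For the $A_n$ weight and root lattices this holds because $\Aut(A_n)=S_{n+1}\times\{\pm1\}$ preserves both $\Z A_n$ and $\Lambda(A_n)$. For the tensor product $\Z A_2\otimes\Z A_2$, one $C_2$ swaps the two tensor factors and the other acts as $-1$.

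Second, I distinguish the groups, since several orders coincide with those in Dade's list. For each pair I record a few conjugacy invariants:
\begin{itemize}
\item the decomposition of $M\otimes\Q$ into irreducible $\Q G$-modules;
\item the isomorphism type of $G$;
\item the invariant sublattice structure, for example whether $M$ is a root lattice or a weight lattice. For $\Lambda(A_n)$ versus $\Z A_n$, the index of $\Z A_n$ in $\Lambda(A_n)$ is $n+1$ and the two are dual as $G$-lattices.
\end{itemize}
I compare these with Dade's generators, following \cite{Dad65} or the GAP library \textup{DadeGroup(4,s)}.

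The key identifications are the following. $\Z B_4=\Z^4$ with $W(B_4)$ is the full monomial group. $\Z D_4$ with $\Aut(D_4)$ coincides with $W(F_4)$ acting on $\Z F_4$: $D_4$ consists of the long roots of $F_4$, so $\Z F_4\supseteq\Z D_4$, and the index must be checked carefully since $\Z F_4=\Z D_4^{*}$-type. The lattice equality is then handled by rescaling, because $W(F_4)=\Aut(D_4)$ has order $1152$.

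Third, maximality. It is Dade's theorem that these nine groups are exactly the maximal ones, so nothing further is needed beyond the bijection. If one wants an independent check, one verifies that no two listed groups are conjugate, using the invariants above, and that each is the full stabilizer of its lattice form. That is, the group is the full automorphism group of the associated Gram form restricted to $M$.

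The main obstacle is the tensor case $G_4$, together with the two pairs of equal order: $\Z A_3\oplus\Z A_1$ versus $\Lambda(A_3)\oplus\Z A_1$, both of order $96$. For the equal-order pairs I would distinguish the lattices by $H^1(G,M)$ or by the fixed sublattices of $W(A_3)$, using duality. For $G_4$, I would write explicit $4\times4$ matrices for the generators acting on the basis $\alpha_i\otimes\alpha_j$. I would then show this group is conjugate to Dade's group of order $144$ by matching the character of the rational representation, which is the tensor product of the two 2-dimensional reflection representations, and by checking that the invariant lattice is unique up to scaling.
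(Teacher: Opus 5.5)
\paragraph{The conjugacy step is missing.} Your plan never produces a $\GL(4,\Z)$-conjugacy between Dade's groups and the listed pairs. That conjugacy is the whole content of the lemma, so ``nothing further is needed beyond the bijection'' assumes what is to be proved. Order, abstract isomorphism type and the $\Q G$-module determine only the $\Q$-class.

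\emph{$G_6$ versus $G_7$.} These have the same order and are both $S_5\times C_2$. They are literally the same subgroup of $\GL(\Q A_4)$, acting on the two non-homothetic invariant lattices $\Z A_4\subset\Lambda(A_4)$. Nothing on your list separates them. The index and duality remark does not, and even $H^1(\Aut(A_4),M)$ vanishes for both lattices. You need a canonical subgroup such as the derived subgroup $A_5$, where $H^1(A_5,\Z A_4)\cong\Z/5$ but $H^1(A_5,\Lambda(A_4))=0$. Note also that $G_1$, $G_2$, $G_3$ all have order $96$, not just the two you name.

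\emph{$G_4$.} The one concrete conjugacy mechanism you give rests on a false claim: the invariant lattice is not unique up to scaling. The lattice $L'=\Lambda(A_2)\otimes\Z A_2+\Z A_2\otimes\Lambda(A_2)$ is $G_4$-stable. It contains $\Z A_2\otimes\Z A_2$ with index $27$, which is not the fourth power of a rational number, so the two are not homothetic. Worse, model $\Z A_2$ by the Eisenstein integers and use $\Q(\omega)\otimes\Q(\omega)\cong\Q(\omega)^2$, $x\otimes y\mapsto(xy,x\bar y)$. Then $L'$, with its form multiplied by $3$, is isometric to $A_2\perp A_2$. So the very same subgroup of $\GL(\Q^4)$, with the same character, acts on $L'$ inside a group of order $288$ and is not maximal there. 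Character matching therefore cannot pick out Dade's $G_4$.

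\paragraph{Repair and comparison with the paper.} Within your strategy, the fix is to prove that each listed group is maximal, i.e.\ $\Aut(M,q)=G$ for \emph{every} $G$-invariant positive definite form $q$.
\begin{itemize}
\item In the absolutely irreducible cases $G_4,\dots,G_9$ one form suffices.
\item For $G_1,G_2,G_3$ the invariant forms form a two-parameter family, and you need uniqueness of the decomposition into orthogonally indecomposable summands.
\end{itemize}
Then Dade's count of nine classes does the rest. The orders $144,288,384,1152$ each occur once. The order-$96$ and order-$240$ groups are separated by the rational decomposition and by integral invariants as above.

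The paper avoids invariants altogether. It reads off from Dade's proof that each maximal group is $G(S,L)$, the full stabilizer of a lattice and a finite subset. It then identifies $(L,S)$ directly with root data:
\begin{itemize}
\item Hurwitz quaternions with their units (the short roots of $F_4$);
\item $\Z^n$ with $\pm\e_i$;
\item the augmentation ideal with $\e_i-\e_j$;
\item $\Z^{n+1}/\Z\sum\e_i$ with $\pm\overline{\e_i}$;
\item direct sums of these, and the tensor product.
\end{itemize}

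\paragraph{A smaller point on $D_4$.} The isomorphism $(\Z D_4,\Aut(D_4))\cong(\Z F_4,W(F_4))$ is not a rescaling, since $[\Lambda(D_4):\Z D_4]=4$ is not a rational fourth power. It comes from the rational similarity $(x_1,\dots,x_4)\mapsto\frac12(x_1+x_2,x_1-x_2,x_3+x_4,x_3-x_4)$. This map carries $\Z D_4$ onto $\Lambda(D_4)=\Z F_4$ and normalizes $\Aut(D_4)=W(F_4)$.
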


\begin{proof}
This result was deduced from the presentation of the Dade Groups in the GAP library in ~\cite{Lem17,GAP4},
  We show how this result can be deduced from the proof (rather than the statement) of the classification of maximal subgroups of $\GL(4,\Z)$ in~\cite{Dad65}.
  Dade expressed these groups as automorphism groups of quadratic forms in the main theorem of his paper.  But in the proof he provided more details.  His descriptions can then be shown to be equivalent to the above lattice descriptions in terms of root systems.

  Given a fixed lattice $L$ inside a real-vector space $V$, and a finite subset $S\subseteq L$, he defines $G(S,L)$ as the subgroup of $\GL(V)$ which stabilizes $L$ and $S$.  In the course of the proof, he determines each maximal subgroup of $\GL(4,\Z)$ as a group of the form $G(S,L)$ for a lattice $L$ in a 4-dimensional real vector space.

  For the group Dade calls $Qn$, he takes $L=\Span_{\Z}\{1,i,j,k,\frac{1+i+j+k}{2}\}$ to be  the lattice of unit quaternions with basis $\{1,i,j,\frac{1+i+j+k}{2}\}$.
  The set $S$ is the unit group $U$ of the ring of integral quaternions.
  $S=\{\pm 1,\pm i,\pm j,\pm k, \frac{\pm 1,\pm i,\pm j,\pm k}{2}\}$.
  Considering that the short roots of the root system $F_4$ are  $$S_0=\{\pm \e_i,\frac{\sum_{i=1}^4\epsilon_i\e_i}{2},\epsilon_i=\pm 1,i=1,\dots,4\}$$ and that a root system basis of $F_4$ is $\{\e_2-\e_3,\e_3-\e_4,\e_4,\frac{\e_1-\e_2-\e_3-\e_4}{2}\}$,
  it is straightforward to see that $\beta=\{\e_1,\dots,\e_3,\frac{\e_1+\e_2+\e_3+\e_4}{2}\}$ is an alternate $\Z$-basis of $\Z F_4$.
    Since $S_0$ is the set of short roots of $F_4$, the reflections of $F_4$ stabilize $S_0$ and so $W(F_4)$ stabilizes $S_0$.  Since $\beta\subseteq S_0\subset \Z\beta=\Z F_4$, we see that $W(F_4)$ stabilizes this lattice.  Then by order arguments, the group $Q_n$ corresponds to  $(\Z F_4,W(F_4))$.  
    So this corresponds to $(M_{G_9},G_9)$ above.
  Note that this lattice may also be described as $\Lambda(D_4)$ with group $\Aut(D_4)$.

  The group Dade calls $Cu_n$ corresponds to $G(S,L)$ where $L=\oplus_{i=1}^n\Z\e_i$ and $S=\{\pm \e_i:i=1,\dots,n\}$.
  This lattice corresponds to $(\Z B_n,W(B_n))$ or equivalently $((\Z A_1)^n,
  \Aut(A_1^n))$ so that for $n=4$, we have recovered $(M_{G_8},G_8)$.
  For $n=3$, we have recovered the second Dade group.  For $n=2$, we have recovered the first maximal subgroup.
  
  The group Dade calls $Su_n$ corresponds to $G(S,L)$ where 
  $\epsilon_n:\oplus_{i=1}^n\Z\e_i\to \Z$, $\e_i\to 1$  is the augmentation map, $L=\ker(\epsilon_n)$ is the augmentation ideal and $S=\{\e_i-\e_j:1\le i\ne j\le n\}$.
  This lattice corresponds to $(\Z A_n,\Aut(A_n))$.
  For $n=4$, we have recovered $(M_{G_6},G_6)$.  We note that for $n=3$, we recover the second Dade subgroup.  For $n=2$, we recover the second maximal subgroup.

  The group Dade calls $Pu_n$ corresponds to $G(S,L)$ where $L=(\oplus_{i=1}^{n+1}\Z \e_i)/\Z(\sum_{i=1}^{n+1} \e_i)$ and $S=\{\pm \e_i+\Z(\sum_{i=1}^{n+1}\e_i):i=1,\dots,n+1\}$.  In the paper, $L$ is identified with the image $L'$ of the  projection of the lattice $\oplus_{i=1}^{n+1}\Z\e_i$ in  $\Q A_n$. Under this projection, $S$ is mapped onto $S'=\{\e_i-\frac{\sum_{j=1}^n\e_j}{n+1}:i=1,\dots,n\}$.
  This lattice corresponds to $(\Lambda(A_n),\Aut(A_n))$.  For $n=4$, we have recovered $(M_{G_7},G_7)$.  For $n=3$, we recover the 4th Dade subgroup.

  The remaining maximal groups are determined by combinations of the above families of lattices from lower dimensions.

  The group which Dade denotes as $Su_2\otimes Cu_2$ has lattice $\Z A_2 \oplus (\Z A_1)^2$ and group $\Aut(A_2\times A_1^2)$ and so recovers $(M_{G_1},G_1)$.

  The group denoted by $Su_3\otimes Cu_1$ has lattice $\Z A_3\oplus \Z A_1$ and group $\Aut(A_3)\times \Aut(A_1)$ and so recovers $(M_{G_2},G_2)$.
  Note that $Su_2\otimes Cu_1$ recovers the first Dade subgroup of dimension 3.

  The group denoted by $Pu_3\otimes Cu_1$ has lattice $\Lambda(A_3)\oplus \Z A_1$ and group $\Aut(A_3)\times \Aut(A_1)$ and so recovers $(M_{G_3},G_2)$.
  Note that $Su_2\otimes Cu_1$ recovers the first Dade subgroup of dimension 3.

  The group denoted by $Su_2^{(2)}$ has lattice $\Z A_2\oplus \Z A_2$ and group the wreath product of $\Aut(A_2)$ by $S_2$, which is the same as $\Aut(A_2\times A_2)$.  This recovers $(M_{G_5},G_5)$.

  The group denoted by $Su_2^{\otimes 2}$ has lattice $\Z A_2\otimes \Z A_2$ and subset
  $$S=\{(\e_i-\e_j)\otimes (\e_r-\e_s): 1\le i\ne j\le 3,1\le r<s\le 3\}$$
  The group is determined as $((S_3\times S_3)\rtimes C_2)\times C_2$.
  This recovers $(M_{G_4},G_4)$.
\end{proof}

\section{Toric Variety of a Root System as a Demazure model}
\label{section:toricroot}

Let $\Phi$ be a crystallographic reduced root system.  
Take $T$ to be an algebraic $\bk$-torus split by Galois extension $L/\bk$ with Galois group $\Gal(L/\Bbbk)=G$, such that $(\widehat{T_L},G)=(\Z\Phi,\Aut(\Phi))$.
Note that these algebraic tori correspond to maximal tori of adjoint semisimple algebraic groups.
The construction of a Demazure model for this algebraic $\bk$-torus was described in ~\cite[p. 236]{KV84}.  We explain this construction in terms of the root polytope and the polar root polytope and determine the associated flasque resolution.

To find a Demazure model, we will take the first approach from Section~\ref{section:demazure}. 
We will first assume that $\Phi$ is irreducible and use the notation of the previous section.  $\Phi$ is a finite $G$-stable subset of $\Z\Phi$ which contains a basis of $\Z\Phi$.  The root polytope $P(\Phi)=\Conv(\Phi)$ contains 0 in its interior and is $G$-stable.  Then the toric variety $X_{P(\Phi)}$ corresponding to the normal fan of $P(\Phi)$ is a normal $G$-invariant projective toric $T_L$-model.
The normal fan of $P(\Phi)$ is the fan determined by cones over faces of the polar dual polytope $P(\Phi)^0$.

The polar dual polytope $P(\Phi)^0$ is called the polar root polytope.
By ~\cite{CM16}, the polar root polytope can be described as:
$$P(\Phi)^0=\cup_{w\in W(\Phi)}w\overline{\mathcal{A}_{\Phi}}$$
where 
$$\overline{\mathcal{A}_{\Phi}}=\{x\in (\R\Phi)^*: \langle \alpha,x\rangle \ge 0, \alpha\in \Pi, \langle \alpha_0,x\rangle\le 1\}$$
is the closure of the fundamental alcove $\mathcal{A}_{\Phi}$ of the affine Weyl group asssociated to $\Phi$.

The closure of the fundamental alcove
$$\overline{\mathcal{A}_{\Phi}}=\Conv(\0,\frac{\omega^{\vee}_i}{m_i}:i=1,\dots,n)$$
is an $n$-simplex~\cite[VI,\S 2.2, Corollary
 ]{Bou02}

The cones over the facets of this polytope are clearly the Weyl chambers of $\Phi$, so that the normal fan of $P(\Phi)$ consists of the Weyl chambers of $\Phi$ and all of its faces.

In particular, the projective toric variety $X_{P(\Phi)}$ is the same as $X(\Phi)$, the toric variety corresponding to the root system $\Phi$.
This is a smooth projective $\Aut(\Phi)$-invariant toric variety.

The maximal cones of the fan $\Sigma_{P(\Phi)}$ are $\Cone(w\mathcal{A}_{\Phi})=\Cone(w\omch_i:i=1,\dots,n)$ for $w\in W(\Phi)$.
So the rays of the fan are determined by the $W=W(\Phi)$-orbits of the fundamental coweights.  Let $D_{w\omch_i}$ be the divisor corresponding to $w\omch_i$ for $w\in W(\Phi),i=1,\dots,n$
and let $E_i(\Phi)=W\cdot \omch_i$ be the $W(\Phi)$-orbit of the $i$th coweight $\omch_i$. Note that the stabilizer subgroup of $\omch_i$ is $W_i=\langle s_{\alpha_j}: j\ne i\rangle$, the ith maximal parabolic subgroup.
Let $$\Div^i_{\Phi}=\oplus_{\lambda\in E_i(\Phi)}\Z D_{\lambda},i=1,\dots,n$$
Let $\Div_{\Phi}=\oplus_{i=1}^n\Div^i_{\Phi}$.
Note that $W(\Phi)$ stabilizes each $\Div^i_{\Phi}$.  But also $\Aut(\Phi)$ also acts on $\Div_{\Phi}$.  $\Aut(\Phi)$ is the semidirect product of $W(\Phi)$ by the diagram automorphism group $D(\Phi)$.  An element of $D(\Phi)$ permutes the simple roots and induces the same permutation on the fundamental coweights.
It then also permutes the $W(\Phi)$-lattices $\Div^i_{\Phi}$.  Explicitly, if $d\in D(\Phi)$ induces the permutation $\sigma_d$ on the simple roots, then $d(w\omch_i)=w^d\omch_{\sigma_d(i)}$ where $w^d=dwd^{-1}$.

This implies that the well-known divisor class sequence for $X(\Phi)$ (see, for example, \cite[(1.1.2)]{BrJo08} for a different derivation) is then invariant under the $\Aut(\Phi)$ action.  Since $X(\Phi)$ is smooth, we have that $\Cl(X(\Phi))=\Pic(X(\Phi))$.
We note that $\Div_T(X(\Phi))=\Div_{\Phi}$.
The divisor class sequence for $X(\Phi)$  considered as a short exact sequence of $\Aut(\Phi)$-lattices will then determine a $\Aut(\Phi)$-flasque resolution for $\Z\Phi$.

\begin{prop}\label{prop:toricrootflasque} A flasque resolution for $\Z\Phi$ as an $G\le \Aut(\Phi)$ lattice is given by

  $$\0\to \Z\Phi\stackrel{\divT}{\to} \Div_{\Phi} \to \Pic(X(\Phi))\to 0$$
  
  where $\Div_{\Phi}=\oplus_{j=1}^n\oplus_{\lambda\in E_j(\Phi)}\Z D_{\lambda}$,  and $\divT(\alpha_i)=\sum_{j=1}^n\sum_{\lambda\in E_j(\Phi)}\langle \alpha_i,\lambda\rangle \lambda$, for all $i=1,\dots,n$.
\end{prop}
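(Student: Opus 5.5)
The plan is to recognise the sequence as the divisor class sequence of the smooth projective toric variety $X(\Phi)=X_{P(\Phi)}$ and then to invoke Voskresenskii's theorem recalled in Section~\ref{section:demazure}. The first step is to confirm that $X(\Phi)$ is a Demazure model for the torus $T$ with $(\widehat{T_L},G)=(\Z\Phi,G)$ for any $G\le\Aut(\Phi)$. The root system $\Phi$ is $\Aut(\Phi)$-stable, spans $\Z\Phi$, and $0$ lies in the interior of $P(\Phi)$, so Approach~1 shows that the normal fan $\Sigma_{P(\Phi)}$ is projective and $\Aut(\Phi)$-invariant, hence $G$-invariant. By the description of the polar root polytope as the $W(\Phi)$-orbit of the closed fundamental alcove, the maximal cones are the Weyl chambers $\Cone(w\omch_1,\dots,w\omch_n)$. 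Since $\{\omch_i\}$ is the $\Z$-basis of the coweight lattice dual to $\Pi$, and $N=(\Z\Phi)^0=\Lambda(\Phi^\vee)$, each maximal cone is generated by a $\Z$-basis of $N$. The fan is therefore smooth, and completeness holds because the Weyl chambers cover $N_\R$.

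The second step is to write down the divisor sequence. The rays of the fan are the orbits $E_j(\Phi)=W\cdot\omch_j$. I need to check that these orbits are pairwise distinct for distinct $j$, and that the primitive ray generators are exactly the elements $\lambda\in E_j(\Phi)$. Distinctness holds because each $W$-orbit meets the closed dominant chamber in exactly one point. Primitivity holds because $\omch_j$ is part of a basis. The Proposition on the class group sequence for a complete fan then gives the exact sequence
$$0\to \Z\Phi\to \Div_{\Phi}\to \Cl(X(\Phi))\to 0,$$
with $\divT(\alpha_i)=\sum_{\rho}\langle\alpha_i,\bu_\rho\rangle D_\rho$, which is the stated formula. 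Smoothness gives $\Cl=\Pic$. Equivariance follows because $G$ permutes the rays, through $w$ and through the rule $d(w\omch_i)=w^d\omch_{\sigma_d(i)}$ for diagram automorphisms. The pairing is invariant, so $\langle g\alpha,g\lambda\rangle=\langle\alpha,\lambda\rangle$, which makes $\divT$ a $G$-map. Consequently $\Div_{\Phi}$ is a $G$-permutation lattice with $\Z$-basis $\{D_\lambda\}$.

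The last step, and the main obstacle, is to show that $\Pic(X(\Phi))$ is $G$-flasque. Here I will cite Voskresenskii's theorem: for a smooth projective model $X=X_\Sigma/G$ we have $X_L=X_\Sigma$, and $\Pic(X_L)$ is flasque. For a self-contained argument, I would show that $\hat H^{-1}(H,\Pic)=0$ for every $H\le G$ using the standard toric argument. Take a class whose norm vanishes and lift it to an invariant Cartier divisor. Then use the fact that every $H$-orbit of cones contains a cone fixed setwise by $H$, and that the local Cartier data on a smooth cone give a splitting. This is the delicate point. I expect to rely on Voskresenskii's result as quoted rather than reprove it.
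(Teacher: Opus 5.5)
Your route is the paper's: read the sequence as the divisor class sequence of the smooth, projective, $\Aut(\Phi)$-stable toric variety $X(\Phi)$, use $\Cl=\Pic$, and quote Voskresenskii. However, one step fails. To get projectivity and invariance you identify the maximal cones of the normal fan $\Sigma_{P(\Phi)}$ with the Weyl chambers, so that $X(\Phi)=X_{P(\Phi)}$. That identification is false in general, even though the discussion before the proposition makes the same claim.

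The maximal cones of $\Sigma_{P(\Phi)}$ correspond to the vertices of $P(\Phi)$, i.e.\ the long roots. The one attached to $\theta$ is
$$\{x:\langle\theta,x\rangle\ge\langle\alpha,x\rangle \mbox{ for all } \alpha\in\Phi\}=\bigcup_{w\in\Stab_{W(\Phi)}(\theta)}wC,\qquad C=\Cone(\omch_1,\dots,\omch_n).$$
The reason is that the outer walls of the alcoves $w\overline{\mathcal{A}_{\Phi}}$ with $w\theta=\theta$ all lie in the hyperplane $\langle\theta,x\rangle=1$ and glue into a single facet of $P(\Phi)^0$.
\begin{itemize}
\item For $A_3$, $P(A_3)$ is a cuboctahedron with $12$ vertices, while there are $24$ chambers. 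The cone at $\theta=\e_1-\e_4$ is the non-simplicial cone on $\omch_1,\omch_2,s_{\alpha_2}\omch_2,\omch_3$.
\item For $D_4$ even the rays differ. Removing the central node from the extended diagram disconnects it, so by Theorem~\ref{thm:facesrootpolytope}(6) the orbit of the central fundamental coweight contributes no rays to $\Sigma_{P(D_4)}$.
\end{itemize}

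The repair is easy. Let $\mu=\sum_{\alpha\in\Phi_+}\alpha\in\Z\Phi$. Since $\langle\mu,\alpha_i^{\vee}\rangle=2$ for all $i$, $\mu$ has trivial stabilizer in $W(\Phi)$. Also $\mu-w\mu$ is a nonzero non-negative combination of simple roots for $w\ne 1$. From this one checks that the normal cone of $\Conv(W(\Phi)\mu)$ at $w\mu$ is exactly $wC$ (or $-wC$, itself a chamber, under the inner-normal convention). Diagram automorphisms permute $\Phi_+$ and so fix $\mu$. Hence $\Conv(W(\Phi)\mu)$ is an $\Aut(\Phi)$-stable full-dimensional lattice polytope whose normal fan is exactly the Weyl chamber fan. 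This gives projectivity and $G$-invariance. The rest of your argument (smoothness, the rays $E_j(\Phi)$, equivariance of $\divT$, $\Cl=\Pic$) then goes through unchanged, also for reducible $\Phi$.

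On flasqueness, citing Voskresenskii as the paper does is fine. But your self-contained sketch rests on a false claim: an $H$-orbit of cones that contains an $H$-stable cone is a singleton, so orbits of size greater than one contain no such cone. The correct argument is short.
\begin{itemize}
\item Dualize, which is legitimate since $\Pic(X(\Phi))$ is a lattice. This gives $0\to\Pic(X(\Phi))^0\to\Div_{\Phi}^0\to N\to 0$, with the dual basis element $D_\lambda^*$ mapping to $\lambda$.
\item Since $\Div_{\Phi}^0$ is permutation, $H^1(H,\Pic(X(\Phi))^0)$ is the cokernel of $(\Div_{\Phi}^0)^H\to N^H$.
\item An $n\in N^H$ lies in the relative interior of a unique cone $\sigma$, which is therefore $H$-stable.
\item Smoothness gives unique integers with $n=\sum c_\lambda\lambda$, the sum over the primitive ray generators $\lambda$ of $\sigma$. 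Uniqueness forces $c_{h\lambda}=c_\lambda$ for $h\in H$.
\item So $\sum c_\lambda D_\lambda^*$ is an $H$-fixed preimage of $n$, and the cokernel vanishes.
\end{itemize}
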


\begin{cor}\label{cor:toricrootflasqueweyl} A flasque resolution for $\Z\Phi$ as a $W(\Phi)$-lattice can be determined as
  $$0\to \Z\Phi\to \oplus_{i=1}^n\Z[W(\Phi)/W_{\omega_i}]\to \Pic(X(\Phi))\to 0$$

  where $W_{\omega_i}=\langle s_{\alpha_j}:j\ne i\rangle$
  is the $i$th  maximal parabolic subgroup of $W(\Phi)$.
\end{cor}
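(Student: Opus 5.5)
This follows by restricting Proposition~\ref{prop:toricrootflasque} from $\Aut(\Phi)$ to $W(\Phi)$ and identifying the middle term as a permutation lattice. Proposition~\ref{prop:toricrootflasque} already gives the exact sequence $0\to \Z\Phi\to \Div_{\Phi}\to \Pic(X(\Phi))\to 0$ as a flasque resolution for any $G\le \Aut(\Phi)$, in particular $G=W(\Phi)$. The flasque property is inherited by subgroups, since it is a condition on all subgroups $H\le G$. A permutation $\Aut(\Phi)$-lattice also restricts to a permutation $W(\Phi)$-lattice. So it only remains to identify $\Div_{\Phi}$ as a $W(\Phi)$-lattice with $\oplus_{i=1}^n\Z[W(\Phi)/W_{\omega_i}]$.

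\textbf{Step 1: $\Div_{\Phi}$ splits into $W$-stable summands.} By construction $\Div_{\Phi}=\oplus_i\Div^i_{\Phi}$, where $\Div^i_{\Phi}$ has $\Z$-basis $\{D_\lambda:\lambda\in E_i(\Phi)\}$. The group $W=W(\Phi)$ acts by $w D_\lambda=D_{w\lambda}$, because $W$ permutes the rays of the Weyl chamber fan, and it preserves each orbit $E_i(\Phi)=W\cdot\omch_i$. The orbits $E_i$ for distinct $i$ are disjoint: every $W$-orbit meets the closed fundamental chamber in exactly one point, and $\omch_i\neq\omch_j$ both lie there. Hence each $\Div^i_{\Phi}$ is a transitive permutation lattice, isomorphic to $\Z[W/\Stab_W(\omch_i)]$ via $wD_{\omch_i}\mapsto w\Stab_W(\omch_i)$.

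\textbf{Step 2: compute the stabilizer.} The main point is $\Stab_W(\omch_i)=\langle s_{\alpha_j}:j\ne i\rangle$. I would invoke the standard fact (Bourbaki, Humphreys~1.12) that the stabilizer of a point in the closed fundamental chamber is generated by the simple reflections fixing it. Here $s_{\alpha_j}\omch_i=\omch_i-\langle\alpha_j,\omch_i\rangle\alpha_j^\vee=\omch_i-\delta_{ij}\alpha_j^\vee$. So $s_{\alpha_j}$ fixes $\omch_i$ exactly when $j\ne i$, which gives the claimed parabolic subgroup.

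\textbf{Step 3: the subscript $\omega_i$.} The corollary writes this group as $W_{\omega_i}$. The stabilizer of $\omega_i$ is generated by the same simple reflections, since $s_{\alpha_j}\omega_i=\omega_i-\delta_{ij}\alpha_j$, so the two stabilizers coincide and the notation is harmless.

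Substituting these identifications into the sequence of Proposition~\ref{prop:toricrootflasque} gives the stated flasque resolution. The only substantive input is the stabilizer fact in Step 2; everything else is bookkeeping.
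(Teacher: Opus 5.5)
Your proposal is correct and is essentially the paper's argument. The corollary is obtained by restricting the divisor class sequence of Proposition~\ref{prop:toricrootflasque} to $W(\Phi)$ and identifying each $\Div^i_{\Phi}$ with $\Z[W(\Phi)/W_i]$ via the stabilizer $W_i=\langle s_{\alpha_j}:j\ne i\rangle$ of $\omch_i$, as in the paper's discussion before the proposition and its remark after the corollary; your checks (disjointness of the orbits $E_i(\Phi)$ via the closed fundamental chamber, and the parabolic-stabilizer fact) fill in details the paper only asserts.
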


\begin{rem}
  The $\Z$-dual map to $\divT:\Z\Phi\to \Div_{\Phi}$ has a particularly simple description.  It is the $\Aut(\Phi)$-map $\varphi$ determined by $\varphi(D_{\omega^{\vee}_i})=\omega^\vee_i, i=1,\dots,n$.
  Since the divisor class sequence determines a flasque resolution of $\Z\Phi$ with respect to any subgroup of $\Aut(\Phi)$,
  $$\0\to \Pic(X(\Phi))^0\to \Div_{\Phi}\stackrel{\varphi}{\to}\Lambda(\Phi^{\vee})\to 0$$ gives a coflasque resolution of $\Lambda(\Phi^{\vee})$ with respect to any subgroup of $\Aut(\Phi)$.
  Restricting to $W(\Phi)$, and identifying $\Div_{\Phi}$ with $\oplus_{i=1}^n\Z[W(\Phi)/W_i]$, we see that $\varphi(wW_i)=w\omch_i$, for $w\in W(\Phi)$ and $i=1,\dots,n$.
\end{rem}

\begin{rem}   Note that given a semisimple adjoint algebraic group $G$ with maximal torus $T$, the closure of $T$ in the wonderful compactification of $G$ gives a natural construction of the toric variety associated to the Weyl chambers of the root system associated to $G$ and $T$.  Although one can determine the closure of a maximal torus of an arbitrary semisimple algebraic group $G$ in its wonderful compactification, the resulting toric variety may not be smooth in general. 
\end{rem}

\begin{rem}
  The flasque resolutions of Proposition~\ref{prop:toricrootflasque} and Corollary~\ref{cor:toricrootflasqueweyl} also work in the case that $\Phi$ is a reducible crystallographic root system.
  Note that if $\Phi=\cup_{i=1}^s\Phi_i$ is a decomposition of $\Phi$ into a union of irreducible root systems, then $\Z\Phi=\oplus_{i=1}^s\Z\Phi_i$, $P(\Phi)=\prod_{i=1}^sP(\Phi_i)$, $P(\Phi)^0=\prod_{i=1}^sP(\Phi_i)^0$, $\Div_{\Phi}=\oplus_{i=1}^s\Div_{\Phi_i}$ and $X(\Phi)=\prod_{i=1}^sX(\Phi_i)$.
  Since each $X(\Phi_i)$ is smooth and projective, so is $X(\Phi)$.
  From our previous discussion of the relationship between the Weyl and automorphism groups of the irreducible components of $\Phi$ to those of $\Phi_i$, we see that $X(\Phi)$ is $W(\Phi)$ and $\Aut(\Phi)$-stable.
\end{rem}
  
  \begin{rem}
    Note that it is sufficient for our applications to consider simply-laced root systems.  For an irreducible non-simply laced root system, one can express the Weyl group as an automorphism group of a simply-laced root system.
Note that $(\Z B_n,\Aut(B_n))=((\Z A_1)^n,\Aut(A_1^n))$, $(\Z C_n,\Aut(C_n))=(\Z D_n,\Aut(D_n))$, $(\Z F_4,\Aut(F_4))=(\Z F_4,W(F_4))=(\Z D_4,\Aut(D_4))$ and \newline 
$(\Z G_2,\Aut(G_2))=(\Z G_2,W(G_2))=(\Z A_2,\Aut(A_2))$.

Also note: if $\Phi$ is an irreducible root system, which is not necessarily simply laced, the root polytope of $\Phi$ coincides with the root polytope of the subroot system of long roots in $\Phi$.  
  \end{rem}

  \begin{prop}
    Let $T$ be an algebraic $\bk$-torus split by a Galois extension $L/\bk$ such that \newline
  $(\widehat{T_L},\Gal(L/\Bbbk))$  corresponds to one of the maximal finite subgroups of $\GL(4,\Z)$: $(M_{G_i},G_i), i=1,\dots,9$.
    Then, unless $i=2,4,7$, there exists a (possibly) reducible root system $\Phi$ such that $(M_{G_i},G_i)=(\Z\Phi,\Aut(\Phi))$.  This implies that a Demazure model can be given as a product of the toric varieties of irreducible simply-laced root systems.
    More specifically, for the algebraic tori corresponding to
    $(\widehat{T_L},\Gal(L/\Bbbk))=(M_{G_i},G_i)$, we may take the following Demazure models:
    \begin{itemize}
\item $(M_{G_1},G_1)=(\Z\Phi,\Aut(\Phi))$ for $\Phi=A_2\times A_1^2$: $X(A_2)\times X(A_1)^2=X(A_2)\times (\P^1)^2$.
\item $(M_{G_3},G_3)=(\Z\Phi,\Aut(\Phi))$ for $\Phi=A_3\times A_1$: $X(A_3)\times X(A_1)=X(A_3)\times \P^1$.
\item $(M_{G_5},G_5)=(\Z\Phi,\Aut(\Phi))$ for $\Phi=A_2\times A_2$: $X(A_2)\times X(A_2)$.
\item $(M_{G_6},G_6)=(\Z\Phi,\Aut(\Phi))$ for $\Phi=A_4$: $X(A_4)$.
\item $(M_{G_8},G_8)=(\Z\Phi,\Aut(\Phi))$ for $\Phi=A_1^4$: $X(A_1)^4=(\P^1)^4$.
\item $(M_{G_9},G_9)=(\Z\Phi,\Aut(\Phi))$ for $\Phi=D_4$: $X(D_4)$.
    \end{itemize}
  \end{prop}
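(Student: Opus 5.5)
The plan is to split the statement into two parts: first identify each lattice $(M_{G_i},G_i)$ for $i\in\{1,3,5,6,8,9\}$ with a root lattice $(\Z\Phi,\Aut(\Phi))$, and then apply the general construction of this section, which shows that the toric variety of a root system is a Demazure model.

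For the identifications I will read off the list in the lemma on Dade groups $G_s=$ DadeGroup$(4,s)$ and use the remark on non-simply-laced systems.
\begin{itemize}
\item For $i=1,3,5$ the lemma already gives the lattice as a direct sum of root lattices of $A_2\times A_1^2$, $A_3\times A_1$ and $A_2\times A_2$. The group is the product, or the wreath-type extension, of the automorphism groups. By the description $\Aut(\Phi)=\prod_i\Aut(\Phi_i)^{r_i}\rtimes S_{r_i}$ for reducible $\Phi$, this group is exactly $\Aut(\Phi)$.
\item For $i=1$ and $i=8$ one checks that $\Aut(A_1^2)$ and $\Aut(A_1^4)$ are the hyperoctahedral groups $C_2\wr S_2$ and $C_2\wr S_4$. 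These act on $(\Z A_1)^k=\Z^k$ through $\Aut(A_1)=\{\pm1\}$, which agrees with $W(B_k)$.
\item For $i=6$ the identification is immediate.
\item For $i=9$ I will use the remark $(\Z F_4,W(F_4))=(\Z D_4,\Aut(D_4))$, since $\Aut(D_4)=W(D_4)\rtimes S_3=W(F_4)$ acting on the lattice spanned by the long roots. The Dade proof gives the lattice in the $F_4$-form; rescaling identifies its short roots with a copy of $D_4$.
\end{itemize}

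With the identifications in place, I will apply the root-system construction. For irreducible $\Phi$, the normal fan of $P(\Phi)$ is the Weyl chamber fan, by the Cellini--Marietti description of $P(\Phi)^0$ as a union of $W$-translates of the closed fundamental alcove. This fan is $\Aut(\Phi)$-invariant and projective, since it is the normal fan of a $G$-stable lattice polytope; this is Approach 1. It is smooth because each chamber is spanned by the fundamental coweights $\omch_1,\dots,\omch_n$, which form a $\Z$-basis of $N=(\Z\Phi)^0=\Lambda(\Phi^\vee)$. Hence $X(\Phi)$ is a Demazure model.

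For reducible $\Phi$ I will invoke the remark that $P(\Phi)$, its normal fan and $X(\Phi)$ all decompose as products. The normal fan of a product of polytopes is the product fan, so $X(\Phi)=\prod X(\Phi_i)$ is smooth and projective. The remaining point is that the wreath factors $S_{r_i}$ permuting isomorphic components preserve the product fan. This holds because they permute identical factor fans.

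Finally, $X(A_1)=\P^1$, since the fan in $\R$ is $\{\R_{\ge0},\R_{\le0},0\}$. This gives the stated products $X(A_2)\times(\P^1)^2$, $X(A_3)\times\P^1$, $X(A_2)^2$, $X(A_4)$, $(\P^1)^4$ and $X(D_4)$.

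I expect the main obstacle to be the exceptional cases $i=8$ and $i=9$. There the group in Dade's description is a Weyl group of a non-simply-laced system, and the claim that it is the full automorphism group of the simply-laced system with the same lattice needs an order count: $|W(F_4)|=1152=|W(D_4)|\cdot 6$, and $|W(B_4)|=2^4\cdot 4!=|\Aut(A_1^4)|$. Together with inclusion in the appropriate direction, these counts give equality. The exclusions $i=2,4,7$ are not part of what must be proved. The cases $\Lambda(A_3)\oplus\Z A_1$, $\Z A_2\otimes\Z A_2$ and $\Lambda(A_4)$ are handled separately later in the paper, so I only need the positive direction for the six listed cases.
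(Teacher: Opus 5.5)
Your identifications of the six lattices with $(\Z\Phi,\Aut(\Phi))$ and your smoothness argument are fine. For smoothness, each chamber is $w\Cone(\omch_1,\dots,\omch_n)$ and $\{\omch_i\}$ is a $\Z$-basis of $N=\Lambda(\Phi^\vee)$. The gap is in projectivity: the normal fan of the root polytope $P(\Phi)=\Conv(\Phi)$ is \emph{not} the Weyl chamber fan once the rank is at least $3$, which covers $A_3$, $A_4$ and $D_4$ here. The paper's Section 6 asserts the same thing, so you inherited this rather than introduced it, but it is false.

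\begin{itemize}
\item The maximal cones of the normal fan correspond to the vertices of $P(\Phi)$, i.e.\ to the roots. There are $12$, $20$, $24$ of them for $A_3,A_4,D_4$, against $24$, $120$, $192$ Weyl chambers.
\item The maximal cone $\{u:\langle\theta,u\rangle\ge\langle\alpha,u\rangle\ \text{for all}\ \alpha\in\Phi\}$ is the union of the chambers $wC$ with $w\in\Stab_W(\theta)$. For $A_3$ it is $\Cone(\omch_1,\omch_2,\omch_3,s_{\alpha_2}\omch_2)$, and $\omch_1+\omch_3=\omch_2+s_{\alpha_2}\omch_2$, so it is not even simplicial.
\item The decomposition $P(\Phi)^0=\bigcup_w w\overline{\mathcal{A}_{\Phi}}$ only triangulates $P(\Phi)^0$. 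The outer faces of all alcoves with the same $w\theta$ lie in one supporting hyperplane $\langle w\theta,x\rangle=1$ and merge into a single facet.
\item The reducible case fails the same way. $\Conv(\Phi_1\cup\Phi_2)$ is not $P(\Phi_1)\times P(\Phi_2)$. For $A_1^2$ it is the diamond $\Conv(\pm\alpha_1,\pm\alpha_2)$, whose normal fan contains $\Cone(\omch_1+\omch_2,\omch_1-\omch_2)$ of multiplicity $2$. So ``the normal fan of a product is the product fan'' cannot be applied to $\Conv(\Phi)$.
\end{itemize}
These errors affect all six cases, since each involves a factor of rank at least $3$ or a reducible $\Phi$.

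The conclusion is still true; you only need a different $\Aut(\Phi)$-stable lattice polytope in Approach 1. Take the zonotope $Z(\Phi)=\sum_{\alpha\in\Phi^+}[-\alpha,\alpha]=\Conv(W(\Phi)\cdot 2\rho)$.
\begin{itemize}
\item Every $g\in\Aut(\Phi)$ permutes the segments $[-\alpha,\alpha]$, so $Z(\Phi)$ is $\Aut(\Phi)$-stable.
\item Its vertices $w(2\rho)$ lie in $\Z\Phi$, since $2\rho=\sum_{\alpha>0}\alpha$.
\item Because $2\rho$ is regular, these $|W(\Phi)|$ vertices are distinct and their normal cones are exactly the Weyl chambers. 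Equivalently, the normal fan of $Z(\Phi)$ is the fan cut out by the hyperplanes $\langle\alpha,u\rangle=0$.
\item For reducible $\Phi$ one has $Z(\Phi)=\prod_i Z(\Phi_i)$. Its normal fan is the product of the factor Weyl fans, and it is automatically invariant under the permutations of isomorphic components.
\end{itemize}
With $Z(\Phi)$ in place of $P(\Phi)$, the rest of your argument goes through and gives the six listed models.
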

  
    \begin{rem}
We observe that the algebraic tori corresponding to both maximal subgroups of $\GL(2,\Z)$ and algebraic tori corresponding to 3 of the 4 maximal subgroups of $\GL(3,\Z)$ take the form $(\Z \Phi,\Aut(\Phi))$ for some root system and hence  the toric variety of $\Phi$ can be taken as a Demazure model.  We also observe that by Kunyavskii's results, we can determine a Demazure model for $(M_{G_2},G_2)$ as a product of his model for the algebraic torus corresponding to $(\Lambda(A_3),\Aut(A_3))$ with $X(A_1)=\P^1$.
We will discuss Kunyavskii's results further in Section~\ref{section:kunyavskii}. So we now need only determine Demazure models for the algebraic tori corresponding to $(\Lambda(A_4),\Aut(A_4))$ and to $(\Z A_2 \otimes \Z A_2,(W(A_2)\times W(A_2))\rtimes C_2\times C_2)$.
\end{rem}      

    \section{Algebraic tori corresponding to Weight lattices of type \texorpdfstring{$A_n$} {An}}

\subsection{Simplicial \texorpdfstring{$\Aut(A_n)$}{Aut(An)}-invariant projective fan}
    
  Let $T$ be an algebraic $\bk$-torus split by a Galois extension $L/\bk$ such that
  $(\widehat{T_L},\Gal(L/\Bbbk))=(\Lambda(A_n),\Aut(A_n))$.
  Note that these tori correspond to maximal tori for an simply connected algebraic $\bk$-group of type $A_n$.

Let $\Phi=A_n$.  
Note that $\Phi=\{\e_i-\e_j:1\le i\ne j\le n+1\}$ and $\Pi=\{\alpha_i=\e_i-\e_{i+1}: i=1,\dots,n\}$.
This root system is simply-laced, so that the weight lattice, and the coweight lattice coincide, as do the  root lattice, and coroot lattice.
The highest root in $A_n$ is $\theta=\sum_{i=1}^n\alpha_i=\e_1-\e_{n+1}$.
So for $A_n$, the coefficient of $\alpha_i$ for the highest root $\theta$ is $m_i=1$ for all $i=1,\dots,n$. Then, by our discussion in Section~\ref{section:toricroot},  the fundamental alcove
is then $\mathcal{A}_{A_n}=\Conv(\0,\omch_i:i=1,\dots,n)$, which shows that $P(A_n)^0$ is a lattice polytope.

So by the second approach to finding a normal $\Aut(A_n)$-invariant projective toric $T_L$-model  from Section~\ref{section:demazure}, we note that $\widehat{T_L}^0=\Lambda(A_n)^0=\Z A_n$ (since $A_n$ is simply-laced) and that $P(A_n)$ is an $\Aut(A_n)$-invariant polytope in $(\ThatLdual)_{\R}$ whose polar dual $P(A_n)^0$ is a lattice polytope. So $\Sigma_0$, the  normal fan of $P(A_n)^0$, or equivalently, the fan  of cones over faces of the root polytope $P(A_n)$ is a projective $\Aut(A_n)$-invariant fan in $\R A_n$ and $X_{\Sigma_0}$ is a normal $\Aut(A_n)$-projective $T_L$-toric model.

We apply the results of Section~\ref{section:rootpolytope} to $\Phi=A_n$ and $I=\{i\}$ to find the coordinate facets of $P(\Phi)$ and their stabilizer subgroups.
The affine Dynkin diagram for $A_n$ corresponds to an $(n+1)$-cycle on the vertices corresponding to  $\widehat{\Pi}=\{\alpha_i:i=0,\dots,n\}$.
Since removing any simple root $\alpha_i,i\in [n]$ from $\widehat{A_n}$ produces an irreducible subroot system, we see by Theorem~\ref{thm:facesrootpolytope} that all of coordinate faces $\{F_i:i=1,\dots,n\}$ are facets. In the special case of $\Phi=A_n$, and $I=\{i\}$, we see that $\partial I=I=\overline{I}=\{i\}$ since $\widehat{\Phi}_{[n]-\{i\}}$ is connected and $\alpha_i$ is not orthogonal to any non-equal adjacent root in $\widehat{\Pi}$.
From the results in Theorem~\ref{thm:facesrootpolytope},
$$\Stab_{W(A_n)}(F_i)=\{w\in W(A_n): w(F_i)=F_i\}=S_i\times S_{n+1-i}$$
whereas the pointwise stabilizer subgroup of this facet is trivial.

Under the action of $\Aut(A_n)$, a set of orbit representatives of the facets can be taken as $$\{F_i:i=1,\dots,\lceil n/2\rceil \}.$$   This shows that
$$\Stab_{\Aut(A_n)}(F_i)=\Stab_{W(A_n)}(F_i)=S_i\times S_{n+1-i}$$

The set of vertices of $F_i$ are
$V_i=A_n\cap F_i=\{\alpha\in A_n: \langle \alpha,\omch_i\rangle =1\}$. 
Since $\alpha\in \Phi$ implies that $\alpha=\sum_{i=1}^n\langle \alpha,\omch_i\rangle\alpha_i$, we see that the coefficient of $\alpha_i$ in $\alpha\in V_i$ must be 1.
This shows that the minimal element of $V_i$ is $\alpha_i$.
This shows that the set of vertices of $F_i$ may be described as:

$$V_i=\{\alpha\in A_n: \alpha\ge \alpha_i\}$$

Note that  $\alpha=\e_r-\e_s=\sum_{j=r}^{s-1}\alpha_{j}\in V_i$ if and only if $r\in [1,i] $ and $s\in [i+1,n+1]$.

For $I\subseteq [1,n+1]$, let $\Delta_I=\Conv(\e_j:j\in I)$.  Then $\Delta_I$ is an $(|I|-1)$-simplex.
$$\varphi_i: \Delta_{[1,i]}\times \Delta_{[i+1,n+1]}\to F_i: (\e_r,\e_{s})\to \e_r-\e_s , r\in [1,i], s\in [i+1,n+1]$$
is an isomorphism of polytopes which is equivariant with respect to the $S_i\times S_{n+1-i}$ action. 

From Theorem~\ref{thm:facesrootpolytope}, the $S_{n+1}$-orbit representatives of the faces of $P(A_n)$ are in bijection with the connected subsets of the affine Dynkin diagram of $A_n$ which contain $\alpha_0$.
More specifically, since the affine Dynkin diagram of $A_n$ is a cycle on vertices $\{0,\dots,n\}$, $F_I$ is an $S_{n+1}$-orbit representative for $I\subseteq [1,n+1]$, if and only if its complement in the $(n+1)$-cycle is connected if and only if $I$ is an interval $[i,j]\subseteq [1,n+1]$.
So such a coordinate face is then
$$F_{[i,j]}=\cap_{r=i}^jF_r=F_i\cap F_j=\Conv\{\e_r-\e_s: r\in [1,i], s\in [j+1,n+1]\}$$
Note that
$F_{[i,j]}\cong \Delta_{i-1}\times \Delta_{n-j}$.  If $i=j$, then the $i$th coordinate facet $F_i\cong \Delta_{i-1}\times \Delta_{n-i}$.

We note that this shows that the set of all faces of $P(A_n)$ is
$\{Q_{I,J}: \emptyset\ne I,J\subseteq [1,n+1], I\cap J=\emptyset\}$
where $Q_{I,J}=\Conv(\e_i-\e_j: i\in I,j\in J\}$. Then
$Q_{I,J}\cong \Delta_{|I|-1}\times \Delta_{|J|-1}$ has dimension $|I|+|J|-2$.
This is because $Q_{[1,i],[j+1,n+1]}=F_{[i,j]}$ and all other faces are obtained as $S_{n+1}$-translates.  We note that $\sigma(Q_{I,J})=Q_{\sigma(I),\sigma(J)}$ for all $\sigma\in S_{n+1}$ and the $S_{n+1}$-orbit of $F_{[i,j]}$ is then 
$$\{Q_{I,J}:I,J\subseteq [1,n+1], I\cap J=\emptyset, |I|=i,|J|=n+1-j\}$$

Note that $-\id$ maps $Q_{I,J}$ to $Q_{J,I}$ and so $\Aut(A_n)$ permutes all faces of $P(A_n)$.

Let 
$$s_{I,J}=\frac{|J|(\sum_{i\in I}\e_i)-|I|(\sum_{j\in J}\e_j)}{\gcd(|I|,|J|)}$$
Then $s_{I,J}$ is an interior lattice point in $\Cone(Q_{I,J})$ which is fixed by the action of $\Stab(Q_{I,J})$.
Note that the barycentre of $\Cone(Q_{I,J})$, the minimal generator of $\Cone(\sum_{i\in I,j\in J}\e_i-\e_j)\cap N$, is $s_{I,J}$.

We have that the normal fan of $P(A_n)^0$ is
$$\Sigma_{P(A_n)^0}=\{\Cone(Q_{I,J}): \emptyset \ne I,J\subseteq [1,n+1],I\cap J=\emptyset\}\cup \{\0\}$$
Observe that a flag of faces in $\Sigma_{P(A_n)^0}$ takes the form:
$$0\subseteq\Cone(Q_{I_1,J_1})\subseteq \dots \subseteq \Cone(Q_{I_n,J_n})$$
where
$$I_1\subseteq \dots \subseteq I_n\subseteq [1,n+1], J_1\subseteq \dots \subseteq J_n\subseteq [1,n+1], I_r\cap J_r=\emptyset, |I_r|+|J_r|=r+1$$

After performing a barycentric subdivision on $\Sigma_{P(A_n)^0}$, we obtain a simplicial fan  whose simplices are determined as
$\Cone(s_{I_1,J_1},\dots,s_{I_n,J_n})$
where
$$I_1\subseteq \dots \subseteq I_n\subseteq [1,n+1], J_1\subseteq \dots \subseteq J_n\subseteq [1,n+1], I_r\cap J_r=\emptyset, |I_r|+|J_r|=r+1$$

We claim that this new fan is a $\Aut(A_n)$-invariant projective simplicial fan on $\R A_n$ which is a subdivision of the original $\Aut(A_n)$-invariant projective fan $\Sigma_{P(A_n)^0}$.
The barycentric subdivision of a fan always determines a simplicial fan.
Our fan is clearly $\Aut(A_n)$-invariant by construction.  The barycentric subdivision of a fan is the result of a finite sequence of star subdivisions with respect to the barycentres of the cones in reverse order of dimension. (See, for example,~\cite[11.1.10]{CLS11}).  
A fan $\Sigma$ determines a projective toric variety $X_{\Sigma}$ if and only if $\Sigma$ is equipped with a strictly convex support function.  If a fan $\Sigma$ is equipped with a strictly convex support function, it is possible to adjust the support function to a strictly convex support function for $\Sigma^*(\v)$ for a star subdivision of $\Sigma$ with respect to $\v$, and hence also for the barycentric subdivision.  

In ~\cite{CTHS05}, they first construct a $G$-invariant projective fan on $N_{\R}$ given an arbitrary projective fan on $N_{\R}$ (Proposition 1).   In the second step (Proposition 2), they use their $G$-invariant projective fan on $N_{\R}$ to construct a $G$-invariant projective simplicial fan on $N_{\R}$ satisfying the additional property (*):
For any $\sigma\in \Sigma$, and $g\in G$, if $\sigma$ and $g\sigma$ are faces of a common cone $\tau\in \Sigma$ then $g\sigma=\sigma$.

To do this, for each $G$-orbit $\Omega$ of $\Sigma_0$, they choose a representative  $\sigma_{\Omega}$ in $\Sigma_0$ and  determine an element $x_{\sigma_{\Omega}}$ in the interior of $\sigma$ which is fixed by the stabilizer subgroup of $\sigma_{\Omega}$.
Then for any $\sigma\in \Sigma_0$ in the orbit of $\sigma_{\Omega}$, there exists $g\in G$ such that $\sigma=g\sigma_{\Omega}$.
They then set $x_{\sigma}=x_{g\sigma_{\Omega}}=gx_{\sigma_{\Omega}}$.  They show that this assignment is well-defined due to property (*).
Lastly they verify that the fan resulting from the barycentric subdivision of $\Sigma_0$ using $x_{\sigma}$ as the barycentres of each face $\sigma$ is 
$G$-invariant, simplicial and projective.

Note that the $\Aut(A_n)$-action on the the set of cones over faces 
$$\Sigma_{P(A_n)^0}=\{\Cone(Q_{I,J}): I,J\subseteq [1,n+1],I\cap J=\emptyset\}$$
has orbits
$$\Omega^r_i=\{\Cone(Q_{I,J}): |I|=i \mbox{ or }|J|=i, |I|+|J|=r+1\}$$
for each $1\le r\le n, 1\le i\le \lceil\frac{r}{2}\rceil$ where $\Cone(Q_{[1,i],[i+1,r+1]})$ is a representative of the orbit $\Omega^r_i$.  Note that if $g\in \Aut(A_n)$ maps $\Cone(Q_{I,J})$ to $\Cone(Q_{I',J'})$, then $g(s_{I,J})=s_{I',J'}$. 
So the construction of ~\cite{CTHS05}, starting with assigning $x_{\Omega^r_i}=s_{[1,i],[i+1,r+1]}$ for each $\Aut(\Phi)$-orbit of $\Sigma_{P(A_n)^0}$  would recover the same $\Aut(\Phi)$-invariant simplicial projective fan.

Orbit representatives of maximal cones in our new simplicial invariant projective fan $\Sigma$ with respect to $\Aut(A_n)$ are determined from each coordinate facet $F_i=\Conv(V_i)$ by fixing the minimal element $\alpha_i=\e_i-\e_{i+1}$ of $V_i$.
Then  $\Aut(A_n)$-orbit representatives of the maximal cones that result from subdividing $\Cone(F_i)$ are determined as
$\Cone(s_{I_1,J_1},\dots,s_{I_n,J_n})$
where $I_1=\{i\}$, $J_1=\{i+1\}$; $I_n=[1,i]$, $J_n=[i+1,n+1]$
and $I_1\subseteq \cdots \subseteq I_n, J_1\subseteq \cdots \subseteq J_n, |I_r|+|J_r|=r+1$.
Note that for each $r$, $I_{r}=[a_r,i]$ and $J_r=[i+1,b_r]$ for some $1\le a_r\le i$, $i+1\le b_r\le n+1$ such that $b_r-a_r=r$.

One can then visualize a  maximal cone coming from the subdivision of $\Cone(F_i)$ as a minimal staircase path from $(a_1,b_1)=(i,i+1)$ to $(a_n,b_n)=(1,n+1)$
on a rectangular grid with southwest corner $(i,i+1)$ and northeast corner $(1,n+1)$.
Note that  $(a_r,b_r)$ is either $(a_{r-1}-1,b_{r-1})$ (1 step north of $(a_{r-1},b_{r-1})$) 
or $(a_{r-1},b_{r-1}+1)$ (1 step east of $(a_{r-1},b_{r-1})$). Minimal staircase paths from $(i,i+1)$ to $(1,n+1)$ also correspond to minimal paths in $V_i=\Conv(\alpha\in A_n:\alpha\ge \alpha_i)$ between the minimal root $\alpha_i=\e_i-\e_{i+1}$ and the highest root $\theta=\e_1-\e_{n+1}$ with respect to the root order.

More explicitly, let the collection of minimal staircase paths  from $(a_1,b_1)=(i,i+1)$ to $(a_n,b_n)=(1,n+1)$ on the rectangular grid  with southwest corner $(i,i+1)$ and northeast corner $(1,n+1)$ be called $\MP(i)$.
Given a $C=((a_r,b_r):r=1,\dots,n)$ in $\MP(i)$, let $I_r=[a_r,i],J_r=[i+1,b_r],r=1,\dots,n$ and let $\gamma_C=\{s_{I_r,J_r}:r=1,\dots,n\}$.  Then the maximal cones contained in $\Cone(F_i)$ are
$\{\Cone(\gamma_C):C\in \MP(i)\}$.

We have proven the following:

\begin{prop}
  There exists a simplicial $\Aut(A_n)$-invariant projective fan $\Sigma^{\mbox{simp}}_{A_n}$ in $(\Z A_n)_{\R}$ with the following construction:
  The $\Aut(A_n)$-orbit representatives of the maximal cones are determined as
  $$\{\Cone(\gamma_{C}): C\in \MP(i),i=1,\dots,\left\lceil\frac{n}{2}\right\rceil\}$$
where $\MP(i)$ is the set of minimal staircase paths from $(i,i+1)$ to $(1,n+1)$
and $\gamma_C=\{s_{[a_r,i],[i+1,b_r]}:r=1,\dots,n\}$ for a minimal staircase path $C=\{[a_r,b_r]:r=1,\dots,n\}\in \MP(i)$.
\end{prop}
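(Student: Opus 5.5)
The plan is to take $\Sigma^{\mbox{simp}}_{A_n}$ to be the barycentric subdivision of $\Sigma_0=\Sigma_{P(A_n)^0}$, the fan of cones over faces of the root polytope. We use the barycentres $s_{I,J}$ for the cones $\Cone(Q_{I,J})$. The argument has four parts: projectivity and invariance of $\Sigma_0$, the barycentric subdivision and its maximal cones, invariance of the subdivision, and orbit representatives.

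\textbf{Step 1 (projectivity and invariance of $\Sigma_0$).} By the second approach of Section~\ref{section:demazure}, $\Sigma_0$ is a projective $\Aut(A_n)$-invariant fan in $(\Z A_n)_{\R}$. This uses only that $P(A_n)$ is $\Aut(A_n)$-stable with $0$ in its interior, and that $P(A_n)^0$ is a lattice polytope, since the fundamental alcove is $\Conv(\0,\omch_i)$ because all $m_i=1$. From Theorem~\ref{thm:facesrootpolytope} applied to the $(n+1)$-cycle affine diagram, the faces are exactly the $Q_{I,J}$ with $I\cap J=\emptyset$, and $Q_{I,J}\cong\Delta_{|I|-1}\times\Delta_{|J|-1}$.

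\textbf{Step 2 (barycentric subdivision).} Apply the barycentric subdivision, i.e.\ star subdivisions at the barycentres in order of decreasing dimension. This gives a simplicial fan refining $\Sigma_0$. Its maximal cones are $\Cone(s_{I_1,J_1},\dots,s_{I_n,J_n})$ for full flags of faces $Q_{I_1,J_1}\subset\cdots\subset Q_{I_n,J_n}$. Face inclusion $Q_{I,J}\subseteq Q_{I',J'}$ is equivalent to $I\subseteq I'$ and $J\subseteq J'$; this is read off from vertex sets, since $\e_i-\e_j$ lies in $Q_{I',J'}$ iff $i\in I'$ and $j\in J'$. Projectivity is preserved at each star subdivision: we modify a strictly convex support function $\varphi$ to $\varphi-\epsilon\psi_{\v}$, where $\psi_{\v}$ is the piecewise linear function that is $1$ on $\v$ and $0$ on the other rays, and $\epsilon$ is small. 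This is the standard argument of \cite[11.1.6]{CLS11}.

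\textbf{Step 3 (invariance of the subdivision).} This is the one point needing care. We show $g(s_{I,J})=s_{g\cdot(I,J)}$ for all $g\in\Aut(A_n)$. Permutations $\sigma\in S_{n+1}$ act by $\sigma(s_{I,J})=s_{\sigma I,\sigma J}$. The element $-\id$ sends $s_{I,J}$ to $s_{J,I}$, and the formula for $s_{I,J}$ is manifestly compatible with this swap. Hence the set of maximal cones is permuted by $\Aut(A_n)$, so property (*) of \cite{CTHS05} is not even needed: the barycentres are canonically attached to cones and equivariant.

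\textbf{Step 4 (orbit representatives).} Every maximal flag ends in a facet. By the facet classification in Theorem~\ref{thm:facesrootpolytope}, the facets are the $W$-translates of $F_i$. Using $-\id$, which swaps $|I|$ and $|J|$, we may take $i\le\lceil n/2\rceil$. The stabilizer $S_i\times S_{n+1-i}$ of $F_i=Q_{[1,i],[i+1,n+1]}$ acts on flags inside $F_i$. Each such flag is given by two chains $I_1\subset\cdots\subset I_n=[1,i]$ and $J_1\subset\cdots\subset J_n=[i+1,n+1]$, each step adding one element to exactly one of the two chains. Using $S_i\times S_{n+1-i}$ we can relabel so that $I_1=\{i\}$, $J_1=\{i+1\}$, and each chain grows by consecutive integers, $I_r=[a_r,i]$ and $J_r=[i+1,b_r]$. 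These normalized flags are exactly the staircase paths in $\MP(i)$, which proves the stated description of the orbit representatives.

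The main obstacle is Step 3 together with this bookkeeping. Verifying equivariance of the $s_{I,J}$ under $-\id$, and that the flag-to-path reduction hits every orbit, is where errors could occur. I would check both explicitly from the formula for $s_{I,J}$ and the order in which elements are added to the chains.
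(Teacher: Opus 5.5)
Your proposal is correct and follows essentially the same route as the paper. Both take the barycentric subdivision of the fan of cones over the faces $Q_{I,J}$ of $P(A_n)$, using the equivariant barycentres $s_{I,J}$, and preserve projectivity by adjusting a strictly convex support function at each star subdivision; the orbit representatives then come from normalizing flags inside a coordinate facet $F_i$ by $S_i\times S_{n+1-i}$, which yields the staircase paths in $\MP(i)$.

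One caveat, which the paper shares: when $n$ is odd and $i=(n+1)/2$, the $\Aut(A_n)$-stabilizer of $F_i$ is strictly larger than $S_i\times S_i$. It also contains $-\id$ composed with the reversal $j\mapsto n+2-j$, and this element exchanges north and east steps of a path. So your argument correctly shows that every orbit is hit, but the list is not irredundant. For instance, for $n=3$ the two paths in $\MP(2)$ give cones in the same $\Aut(A_3)$-orbit.
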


\begin{rem}~\label{rem:condition*}
  By construction, the $G=\Aut(\Phi)$ invariant fan $\Sigma= \Sigma^{\mathrm{simp}}_{A_n}$  satisfies the technical condition (*) of Proposition 2 of ~\cite{CTHS05}.
  (*) states that if  $\tau\in \Sigma$ and $g\in G$ are such that $\tau$  and $g\tau$ are both faces of the same cone $\sigma$ then $g\tau=\tau$.
  As they remark, this condition implies that the stabilizer of any cone coincides with its pointwise stabilizer.

  The condition is satisfied as in Prop 2 since cones in $\Sigma=\Sigma^{\mathrm{simp}}_{A_n}$ are determined by flags of cones in the initial $G$-invariant projective fan $\Sigma_0=\Sigma_{P(A_n)^0}$ and faces of such cones in $\Sigma$ are determined by subflags of cones of $\Sigma_0$.  If $\tau$ and $g\tau$ are faces of the same cone $\sigma$, then in the original $G$-invariant projective fan $\Sigma_0$, $\tau$ corresponds to a subflag of the flag of cones in $\Sigma_0$ determining the cone $\sigma$.  Since $g\tau$ corresponds to a subflag of cones of the same dimensions, and there is at most one cone of each dimension in the flag determining $\sigma$, $\tau$ and $g\tau$ must correspond to the same subflag of cones, and hence must coincide~\cite[Lemme 4,5]{CTHS05}.
\end{rem}

\subsection{Comparison to the unimodular triangulation of the root polytope}

Unfortunately, as we will see in Section~\ref{section:lowdim}, for $n\ge 4$, our simplicial $\Aut(A_n)$-invariant projective fan  $\Sigma^{\mathrm{simp}}_{A_n}$ is not smooth.
However, its construction reminds one strongly of the so-called ``staircase triangulation'' of $\Delta_{i-1}\times \Delta_{n-i}$. Note that unimodular triangulations of the root polytope for $A_n$ were determined independently using different techniques by Ardila et al and Cellini, Marietta~\cite{ABHPS11,CM15}.  (It is also related to the unimodular triangulations of the polytopes determined by the positive roots of $A_n$ as in~\cite{Mes11,GMP82}).  Cellini and Marietti take advantage of the symmetry of the $W(A_n)$-action, by constructing a triangulation of the coordinate facets and then extending it to the other facets using the $W(A_n)$- group action.  However, their unimodular triangulation is not invariant under the action of $W(A_n)$ (or $\Aut(A_n)$).
The problem is that although the triangulation is defined on Weyl group representatives of the facets, the triangulation of each $F_i$ is not equivariant under the stabilizer group of $F_i$.

Despite this issue, there is still a strong connection between the unimodular triangulation and the simplicial fan that we constructed.  
The staircase triangulation on $\Delta_{i-1}\times \Delta_{n-i}$ together with its isomorphism with $F_i=\Conv\{\alpha\in A_n:\alpha\ge \alpha_i\}$ induces a unimodular triangulation of $F_i$.
For each $C=\{(a_r,b_r):r=1,\dots,n\}$ in $\MP(i)$ (minimal staircase path from $(i,i+1)$ to $(1,n+1)$),  
we obtain a polytope $$P_C=\Conv(\e_{a_r}-\e_{b_r}:r=1,\dots,n)$$
and a corresponding $\Z$-basis $\beta_C=\{\beta_r=\e_{a_r}-\e_{b_r}:r=1,\dots,n\}$ of $\Z A_n$.
The corresponding unimodular triangulation of $F_i$ is then $\{P_C=\Conv(\beta_C):C\in \MP(i)\}$.

We wish to compare the sets $\beta_C$ and $\gamma_C$ for each $C\in \MP(i)$.  This comparison will help us to find a smooth $\Aut(A_4)$-invariant projective fan on $(\Z A_n)_{\R}$.
We will first recall how one can determine a smooth projective fan from a simplicial projective fan. 

In general, if a  simplicial projective fan $\Sigma$ is not smooth, it has at least one  simplicial but nonsmooth cone $\sigma$. We recall that the multiplicity of the cone $\sigma$ with linearly independent primitive generators $\x_1,\dots,\x_r$ is  $\mult(\sigma)=[\Span_{\R}(\sigma)\cap N:\sum_{i=1}^r\Z\x_i]$, the index of the subgroup generated by the primitive vectors of $\sigma$ in $N$.  This is the same as the number of lattice points in the parallelotope $P_{\sigma}=\{\sum_{i=1}^r\lambda_i\x_i:0\le \lambda<1\}$.  Recall that $\mult(\sigma)=1$ if and only if $\sigma$ is smooth if and only if the primitive generators of $\sigma$ form part of a basis of $N$.

  In order to subdivide a simplicial but nonsmooth cone $\sigma$ into a union of smooth cones, one finds its multiplicity, and if its multiplicity is greater than 1, one finds an interior lattice point $\v$ of $P_{\sigma}$.  Then each cone of the star-subdivision of $\sigma$ with respect to $\v$ has strictly smaller multiplicity than $\sigma$.  One can replace $\Sigma$ by $\Sigma^*(\v)$.  $\Sigma^*(\v)$ is then a projective fan, whose cones have smaller multiplicity than those of $\Sigma$.  This process can be repeated inductively to produce a smooth projective fan. See~\cite[11.1.8,11.1.9]{CLS11} or ~\cite[I,\S 2,Lemma 2,p.34]{KKMSD73} for more details.  

  We remark that if the original simplicial projective fan $\Sigma$ is additionally $G$-invariant and satisfies technical condition (*) of Proposition 2 of~\cite{CTHS05} (see Remark~\ref{rem:condition*}), it is shown in the proof of ~\cite[Prop 3]{CTHS05},
  that if $\sigma$ is  a simplicial but non-smooth cone, and $\v$ is an interior point in $P_{\sigma}$, then the star-subdivision of $\Sigma$ with respect to each vector in the $G$-orbit of $\v$ produces a $G$-invariant projective fan.  Each cone $\tau$ containing $\v$ and each $g\in G$ produces a new cone $\Cone(g\tau,g\v)$ of $\Sigma^*(g\v)$ with smaller multiplicity than that of $\sigma$.  This process can be repeated inductively to produce a $G$-invariant  projective fan, all of whose cones have multiplicity 1.

We now return to our specific situation and the $\Aut(A_n)$-invariant simplicial projective fan with maximal cones $\Cone(\gamma_C)$ for each $C\in \MP(i)$ and each $i=1,\dots,n$.  
Given such a $C=\{(a_r,b_r):r=1,\dots,n\}$ in $\MP(i)$, we consider the partial paths $C_t=\{(a_r,b_r):r=1,\dots,t\}$.
We will show that the $\beta_{C_t}$,$t=1,\dots,n$ can be used to inductively compute the multiplicity of $\Cone(\gamma_{C_t}),t=1,\dots,n$. Note that $\{\Cone(\gamma_{C_t}):t=1,\dots,n\}$ is the set of positive dimensional faces of $\Cone(\gamma_C)$.

\begin{lem}
   Let $C=\{(a_r,b_r):r=1,\dots,n\}\in \MP(i)$ be a minimal staircase path from $(i,i+1)$ to $(1,n+1)$.   Let $C_t=\{(a_r,b_r):i=1,\dots,t\}$ be a partial staircase path for some $1\le t\le n$.
  Let $\beta_{C_t}=\{\e_{a_r}-\e_{b_r}:r=1,\dots,t\}$ and $\gamma_{C_t}=\{s_{[a_r,i],[i+1,b_r]}:r=1,\dots,t\}$.
  Note that $C_n=C$ and $\Conv(\beta_C)$ is the polytope which corresponds to $C$ in the unimodular triangulation of $F_i$ and $\Cone(\gamma_C)$ is the maximal cone which corresponds to $C$ in the simplicial subdivision of $\Cone(F_i)$.
  
  Then for all $1\le t\le n$,
  $\Span_{\Z}(\gamma_{C_t})$ is a finite index sublattice of $$\Span_{\Z}(\beta_{C_t})=\Span_{Z}\{\e_a-\e_b:a\in [a_t,i],b\in [i+1,b_t]\}$$
  So $\mult(\Cone(\gamma_{C}))=[\Span_{\Z}(\beta_C):\Span_{\Z}(\gamma_C)]$ can be computed inductively from $$\mult(\Cone(\gamma_{C_t}))=[\Span_{\Z}(\beta_{C_t}):\Span_{\Z}(\gamma_{C_t})],t=1,\dots,n.$$
\end{lem}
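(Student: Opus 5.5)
The plan is to set $A_t=[a_t,i]$ and $B_t=[i+1,b_t]$, and let
$$L_t=\Span_{\Z}\{\e_a-\e_b:a\in A_t,\ b\in B_t\}.$$
We work by induction on $t$ and prove three things simultaneously: $\Span_{\Z}(\beta_{C_t})=L_t$, the vectors $\gamma_{C_t}$ lie in $L_t$, and $\gamma_{C_t}$ is linearly independent of rank $t$.

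\textbf{First claim.} Since $|A_t|+|B_t|=t+1$, the lattice $L_t$ has rank $t$. It equals the sum-zero sublattice of $\bigoplus_{j\in A_t\cup B_t}\Z\e_j$, because it is spanned by the differences $\e_a-\e_b$ across the bipartition, and these connect all coordinates of $A_t\cup B_t$.

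For the induction step, the path moves from $(a_{t-1},b_{t-1})$ to $(a_t,b_t)$ either north or east. In the north case $a_t=a_{t-1}-1$, so $A_t=A_{t-1}\cup\{a_t\}$, and the new vector is $\beta_t=\e_{a_t}-\e_{b_t}$ with $b_t\in B_{t-1}$. Adjoining $\beta_t$ to $L_{t-1}$ adds the new coordinate $a_t$ to the connected bipartite "edge graph", which generates the sum-zero lattice on $A_t\cup B_t$. The east case is symmetric. The base case is $\beta_{C_1}=\{\e_i-\e_{i+1}\}$, which spans $L_1$. This is the usual unimodularity argument for the staircase triangulation: the edges $\{a_r,b_r\}$ form a spanning tree of the complete bipartite graph on $A_t\sqcup B_t$, and a tree's edge vectors give a basis of the sum-zero lattice.

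\textbf{Second claim.} For $r\le t$ we have $A_r\subseteq A_t$ and $B_r\subseteq B_t$. The coefficients of $s_{A_r,B_r}$ sum to zero, namely $|B_r||A_r|-|A_r||B_r|=0$ before dividing by the gcd. Hence $s_{A_r,B_r}$ is an integral sum-zero vector supported in $A_t\cup B_t$, so it lies in $L_t$.

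\textbf{Third claim.} This is where the finite index comes from. In the north case, $s_{A_t,B_t}$ has a nonzero coefficient on $\e_{a_t}$, whereas every earlier $s_{A_r,B_r}$ with $r<t$ is supported in $A_{t-1}\cup B_{t-1}$, which does not contain $a_t$. So $s_{A_t,B_t}\notin\Span_{\R}(\gamma_{C_{t-1}})$. The east case works the same way using $\e_{b_t}$. By induction $\gamma_{C_t}$ consists of $t$ independent vectors in the rank $t$ lattice $L_t$, so $\Span_{\Z}(\gamma_{C_t})$ has finite index in $L_t=\Span_{\Z}(\beta_{C_t})$.

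\textbf{Multiplicity.} It remains to identify the multiplicity. The lattice $L_t$ is saturated in $N=\Z A_n$, since it equals $N\cap\Span_{\R}(\gamma_{C_t})$: a sum-zero integral vector supported on $A_t\cup B_t$ lies in $L_t$. Each $s_{A_r,B_r}$ is primitive, because the gcd has been divided out, so it is the minimal generator of its ray. By the definition of multiplicity,
$$\mult(\Cone(\gamma_{C_t}))=[\Span_{\R}(\gamma_{C_t})\cap N:\Span_{\Z}(\gamma_{C_t})]=[L_t:\Span_{\Z}(\gamma_{C_t})].$$
For the inductive computation, write the new generator in the basis $\beta_{C_t}$. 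Its coefficient on $\beta_t$ is determined by the coefficient of the new coordinate, $|B_t|/\gcd$ or $|A_t|/\gcd$. Using the triangular structure, this gives $\mult(\Cone(\gamma_{C_t}))=\mult(\Cone(\gamma_{C_{t-1}}))\cdot c_t$, where $c_t$ is the absolute value of that coefficient.

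The main obstacle I expect is verifying that the basis-change matrix is genuinely triangular with respect to the ordering of $\beta_{C_t}$. This needs care, because the vectors $s_{A_r,B_r}$ have full support on $A_r\cup B_r$ and so are not simply aligned with the $\beta_r$. The remedy is to project $L_t\to\Z$ onto the coordinate of the new vertex; the kernel of this projection restricted to $L_t$ is exactly $L_{t-1}$.
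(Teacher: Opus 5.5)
Your proof is correct and follows the same route as the paper's proof. You show by induction along the staircase that $\beta_{C_t}$ is a $\Z$-basis of the rectangle lattice, place $\gamma_{C_t}$ inside it, note that the new generator $s_{[a_t,i],[i+1,b_t]}$ lies outside the span of the previous generators, and use saturation in $\Z A_n$ to identify the multiplicity with the index. The differences are minor: you describe that lattice as the sum-zero lattice on $[a_t,i]\cup[i+1,b_t]$ (a spanning-tree argument) rather than through the simple roots $\alpha_j$, and you get independence from the support on the new coordinate rather than from saturation. Your extra recursion $\mult(\Cone(\gamma_{C_t}))=\mult(\Cone(\gamma_{C_{t-1}}))\cdot c_t$ is also correct: it reproduces the multiplicities $3,2,2$ that the paper computes for $n=4$, and it makes precise the ``triangular change of basis'' that the paper uses only implicitly.
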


\begin{proof}
We first observe that
$$V^i_r\equiv \Span_{\Z}\{e_a-e_b:a\in [a_r,i],b\in [i+1,b_r]\}$$
is the $\Z$-span of the positive roots corresponding to the rectangular array $[a_r,i]\times [i+1,b_r]$.   In fact, $\{\alpha_t: t\in [a_r,b_r]\}$ is a $\Z$-basis for $V^i_r$ since $\e_a-\e_b=\sum_{t=a}^b\alpha_t$ if $a\in [a_r,i],b\in [i+1,b_r]$.  
In particular, $V^i_n=\Z A_n$.  We will set $V^i_0=\{\0\}$.

  We first claim that if $C_t=\{(a_r,b_r):i=1,\dots,t\}$ is the partial staircase path from  $(a_1,b_1)=(i,i+1)$ to $(a_t,b_t)$, then $\beta_{C_t}$ is a $\Z$-basis for $V^i_t$.
In particular, this observation reproves the fact (from the unimodular triangulation) that $\beta_C$ is a $\Z$-basis of $\Z A_n$.

We prove this claim by induction on $r$.
$\beta_{C_1}=\{\alpha_i\}$ and so is a basis for $V^i_1$.  Assume $\beta_{C_r}$ is a basis for $V^i_{r}$.

It is clear by construction that $\beta_{C_{r+1}}\subseteq \{\e_a-\e_b: a\in [a_{r+1},i],b\in [i+1,b_{r+1}]\}$ so that $\Span_{\Z}(\beta_{C_{r+1}})\subseteq V^i_{r+1}$.

To show the reverse inclusion, we need to show that $\alpha_t\in \Span_{\Z}(\beta_{C_{r+1}})$ for $t\in [a_{r+1},b_{r+1}]$ since $\{\alpha_t:t\in [a_{r+1},b_{r+1}]\}$ is a $\Z$-basis of $V^i_{r+1}$.
By the staircase condition, 
$(a_{r+1},b_{r+1})=(a_r-1,b_r)$ or $(a_{r+1},b_{r+1})=(a_r,b_r+1)$.
So for $t\in [a_r,b_r]$, $\alpha_t\in V^i_r=\Span_{\Z}(\beta_{C_r})\subseteq \Span_{\Z}(\beta_{C_{r+1}})$.

If $(a_{r+1},b_{r+1})=(a_r-1,b_r)$, then $\alpha_{a_{r+1}}=(\e_{a_{r+1}}-\e_{b_r})-(\e_{a_r}-\e_{b_r})\in \Span_{\Z}(\beta_{C_{r+1}})$.

If $(a_{r+1},b_{r+1})=(a_r,b_r+1)$, then $\alpha_{b_{r+1}}=(\e_{a_{r}}-\e_{b_{r+1}})-(\e_{a_r}-\e_{b_r})\in \Span_{\Z}(\beta_{C_{r+1}})$.

(Either way, the additional simple root is the difference of the last two vectors in $\beta_{C_{r+1}}$.)

So $\beta_{C_t}$ is a $\Z$-basis for $V^i_{t}$ for all $t=1,\dots,n$ and, in particular, $\beta_C$ is a basis for $V^i_{n}=\Z A_n$.
Note that for $s<t$, $\beta_{C_s}\subset \beta_{C_t}$ and $\gamma_{C_s}\subset \gamma_{C_t}$. We set $\beta_{C_0}=\emptyset$.
We will prove by induction on $n$ that for each $t=1,\dots,n$, $\gamma_{C_t}\subseteq V^i_{t}\setminus V^i_{t-1}$.

Observe that $\gcd(|I|,|J|)s_{I,J}=\sum_{a\in I,b\in J}\e_a-\e_b$.

Since $\gamma_{C_1}=\beta_{C_1}=\{\e_i-\e_{i+1}\}$, and $V^i_0=\{\0\}$,  the result is trivial for $t=1$.

Assume $t>1$ and that the result holds for $\gamma_{C_{t-1}}$.  So $\gamma_{C_{t-1}}\subseteq V^i_{t-1}\subset V^i_{t}$.
Now $\gamma_{C_t}=\gamma_{C_{t-1}}\cup \{s_{[a_t,i],[i+1,b_t]}\}$.
By the observation above,  a positive integer multiple of
$s_{[a_t,i],[i+1,b_t]}\in V^i_t$, but $s_{[a_t,i],[i+1,b_t]}$ is not in  $V^i_{t-1}$.  So $s_{[a_t,i],[i+1,b_t]}\in (\Z A_n\cap \Q V^i_t)\setminus V^i_{t-1}$. 
Since a $\Z$-basis of $V^i_t$ is $\{\alpha_j: j\in [a_t,b_t]\}$ and this $\Z$-basis can be extended to a $\Z$-basis of $\Z A_n$, then $\Z A_n\cap \Q V^i_t=V^i_t$.

So, by induction, for each $t=1,\dots,n$, $\Span_{\Z}(\gamma_{C_t})$ is a finite index sublattice of $V^i_t=\Span_{\Z}(\beta_{C_t})$ and $\mult(\Cone(\gamma_{C_t})=[\Span_{\Z}(\beta_{C_t}):\Span_{\Z}(\gamma_{C_t})]$.
In particular, $\Span_{\Q}(\gamma_{C_t})=\Span_{\Q}(\beta_{C_t})$ so that $\Span_{\R}(\gamma_{C_t})\cap \Span_{\Z}(\beta_{C_t})=\Span_{\Z}(\beta_{C_t})$.  So $\mult(\Cone(\gamma_{C_t}))=[\Z\beta_{C_t}:\Z\gamma_{C_t}]$ as required.
\end{proof}

\begin{rem}
  There is another relatively natural way to determine the simplicial $\Aut(A_n)$-invariant fan from the unimodular triangulation of $P(A_n)$.
  As already observed, the unimodular triangulation of $P(A_n)$ is determined by the natural isomorphism of each coordinate facet $F_i$ with a product of simplices, and then the staircase triangulation of the product of simplices.
Although the staircase triangulation of
$\Delta_{i-1}\times \Delta_{n-i}$ is not $S_{i}\times S_{n+1-i}$-invariant, it is possible to first decompose the product of simplices into a union of product of simplices indexed by the elements of $S_{i}\times S_{n+1-i}$.

Let $I=\{i_1,\dots,i_r\}$ be an ordered subset of $[1,n+1]$ and $I_s=\{i_1,\dots,i_s\}$ for all $1\le s\le r=|I|$.
Let $\Delta_I=\Conv(\e_i:i\in I)$ be a standard simplex.  Let $\v_J=\frac{1}{|J|}\sum_{j\in J}\e_j$ for each subset $J$ of $[1,n+1]$.
The barycentric subdivision of $\Delta_I$ can be viewed as a decomposition into $|I|!$-simplices in $\oplus_{i\in I}\Q\e_i$, one for each element of $S_{I}$:
$$\Delta_I=\cup_{\sigma\in S_I}\Delta_I(\sigma)=\cup_{\sigma\in S_I}\sigma(\Delta_I(\id))$$
where here $$\Delta_I(\sigma)=\Conv(\v_{\sigma(I_j)}:j=1,\dots,|I|)$$ 

In fact, as is well known, and can be easily checked:
$$\Delta_I(\sigma)=\left\{\sum_{j=1}^ra_j\e_{i_j}: a_{\sigma(1)}\ge a_{\sigma(2)}\ge \cdots\ge a_{\sigma(r)},0\le \sum_{j=1}^ra_j\le 1\right\}$$
Note that this description shows that the subgroup of $S_{I}$ which stabilizes $\Delta_I(\sigma)$ coincides with the subgroup of $S_{I}$ which stabilizes it pointwise.

One can then decompose $\Delta_I\times \Delta_J$ for disjoint subsets $I,J\subseteq [1,n+1]$ as
$$\Delta_I\times \Delta_J=\cup_{(\sigma,\tau)\in S_I\times S_J}(\sigma,\tau)(\Delta_I(\id)\times \Delta(\id))$$

For disjoint subsets $I,J\subseteq [1,n+1]$, let
$$\delta_{I,J}:\oplus_{i\in I}\R\e_i\times \oplus_{j\in J}\R\e_j\to\oplus_{i\in I,j\in J}\R(\e_i-\e_j), (\v,\w)\to \v-\w$$
be the natural difference map.
Note that $\v_I-\v_J=\frac{1}{\lcm(|I|,|J|)}s_{I,J}$ if $I,J$ are disjoint sets and so $s_{I,J}$ is the primitive generator of $\Cone(\v_I-\v_J)$.

Note that $\delta_{[1,i],[i+1,n+1]}$ maps $\Delta_{[1,i]}\times \Delta_{[i+1,n+1]}$ isomorphically onto the $i$th coordinate facet.

One can then apply the map $\delta_{[1,i],[i+1,n+1]}$ to the decomposition of $\Delta_{[1,i]}\times \Delta_{[i+1,n+1]}$ to equivariantly subdivide $F_i$ with respect to the $S_{[1,i]}\times S_{[i+1,n+1]}$ action:

$$F_i=\cup_{\sigma\in S_{[1,i]},\tau\in S_{[i+1,n+1]}}F_i(\sigma,\tau)=
  \cup_{\sigma\in S_{[1,i]},\tau\in S_{[i+1,n+1]}}(\sigma,\tau)(F_i(\id,\id))$$

    Ordering $[1,i]$ in decreasing order and $[i+1,n+1]$ in increasing order, we can apply the staircase  triangulation to $\Delta_{[1,i]}(\id)\times \Delta{[i+1,n+1]}(\id)$ and hence to its image $F_i(\id,\id)$ under the difference map.
    For each minimal staircase path $C=\{(r_i,s_i):i=1,\dots,n\}$ in $\MP(i)$,
    $$F_i(\id,\id)=\cup_{C\in \MP(i)}\Conv(\widehat{\beta}_C)$$
    where $\widehat{\beta}_C=\{\v_{[a_r,i]}-\v_{[i+1,b_r]}:r=1,\dots,n\}$.
   Looking at the induced decomposition on cones, we see that
   $\Cone(\widehat{\beta_C})=\Cone(\gamma_C)$
   since $\Cone(\v_I-\v_J)=\Cone(s_{I,J})$ for $I,J$ disjoint subsets of $[1,n+1]$.  
Since $\Cone(F_i(\sigma,\tau))=(\sigma,\tau)\Cone(F_i(\id,\id))$, for all $(\sigma,\tau)\in S_{[1,i]}\times S_{[i+1,n+1]}$, one can use the group action to determine the rest of the decomposition.
This then recovers our simplicial $\Aut(A_n)$-invariant fan.
\end{rem}    
 
\subsection{Demazure model for the algebraic torus corresponding to \texorpdfstring{$(\Lambda(A_4),\Aut(A_4))$}{the weight lattice of the A4 root system under the action of its automorphism group}}.
\label{section:lowdim}

We now look at the low-dimensional cases to determine a smooth $\Aut(A_n)$-model of the algebraic torus corresponding to the character lattice $\Lambda(A_n)$ as a $\Aut(A_n)$-lattice for $n\le 4$.

We recall that the maximal cones of our normal $\Aut(A_n)$-invariant projective models $\Sigma_{P(A_n)^0}$ have $\Aut(A_n)$-orbits $\Cone(F_i),i=1,\dots,\lceil{n/2\rceil}$ corresponding to the coordinate facets $F_i=\Conv(\alpha\in A_n:\alpha\ge \alpha_i)$.

  We note first that for all $n$, $\Cone(F_1)=\Cone(\alpha\in A_n: \alpha\ge \alpha_1)$ corresponding to the cone over the coordinate facet $F_1$ is always smooth.
  It corresponds to the fact that there is a unique minimal path $C$ from $(1,2)$ to $(1,n+1)$ and that $\beta_{C}=\{\beta_r=\e_1-\e_r:r=2,\dots,n+1\}$ is clearly already a  $\Z$-basis for $\Z A_n$.  Correspondingly $\gamma_C=\{\gamma_r=r\e_1-\sum_{i=2}^{r+1}\e_i:r=1,\dots,n\}$ is a $\Z$-basis for $\Z A_n$ as $\gamma_r=\sum_{i=1}^r\beta_i$.
So in the simplicial model there is only one $\Aut(A_n)$-orbit of maximal cones from $\Cone(F_1)$.  
  
  For $n=2$, the fan of the original normal projective model $\Sigma_{P(A_2)^0}$ had only one $\Aut(A_2)$-orbit of maximal cones with representative $\Cone(F_1)=\Cone(\e_1-\e_2,\e_1-\e_3)$ and stabilizer subgroup $S_2$.  This is already a smooth cone, so there was actually no need for further subdivision in this case.
  However, it is still interesting to look at the result of the barycentric subdivision.  One obtains one $\Aut(A_2)$-orbit of maximal cones with representative $\Cone(\e_1-\e_2,2\e_1-\e_2-\e_3)$.  Note that the primitive generators form a basis of the root system $G_2$.

  For $n=3$, the fan of the original normal projective model had 2 $\Aut(A_2)$-orbits of maximal cones with representatives
\begin{itemize}
\item  $\Cone(F_1)=\Cone(\e_1-\e_2,\e_1-\e_3,\e_1-\e_4)$
 \item $\Cone(F_2)=\Cone(\e_2-\e_3,\e_2-\e_4,\e_1-\e_3,\e_1-\e_4)$
\end{itemize}
   corresponding to first two 2 coordinate facets
  $F_i=\Conv(\alpha\in A_3: \alpha\ge \alpha_i),i=1,2$.
  There are 6 cones in the orbit of $\Cone(F_1)$:
  $\pm\Cone(\e_i-\e_j: j\ne i)$, $i=1,2,3$.
  As before $\Cone(F_1)$ is smooth (as was predicted by $F_1\cong \Delta_2$) but $\Cone(F_2)$ is not simplicial.
  However a barycentric subdivision of $\Cone(F_2)$ with respect to $\v=(\e_1+\e_2)-(\e_3+\e_4)$, subdivides $\Cone(F_2)$ into 4 smooth cones:
  \begin{itemize}
  \item $\Cone(\e_i-\e_3,\e_i-\e_4,\e_1+\e_2-\e_3-\e_4), i=1,2$
  \item $\Cone(\e_1-\e_j,\e_2-\e_j,\e_1+\e_2-\e_3-\e_4), j=3,4$.
\end{itemize}
    This corresponds to Kunyavskii's construction - as will be discussed later in Section~\ref{section:kunyavskii}.

Our simplicial model is a bit larger, and is also smooth.
We have 3 orbits of maximal cones in our simplicial fan  with representatives:

From $\Cone(F_1)$:
$$\Cone(\e_1-\e_2,2\e_1-\e_2-\e_3,3\e_1-\e_2-\e_3-\e_4)$$
From $\Cone(F_2)$:
\begin{itemize}
\item $\Cone(\gamma_{C_1})=\Cone(\e_2-\e_3,2\e_2-\e_3-\e_4,\e_1+\e_2-\e_3-\e_4)$
\item $\Cone(\gamma_{C_2})=\Cone(\e_2-\e_3,\e_1+\e_2-2\e_3,\e_1+\e_2-\e_3-\e_4)$
\end{itemize}
  The last two correspond to the 2 minimal staircase paths $C_1=\{(2,3),(2,4),(1,4)\}$ and $C_2=\{(2,3),(1,3),(1,4)\}$ from $(2,3)$ to $(1,4)$.
It is easy to check directly that these are all smooth.
In terms of the unimodular triangulation, $\Span_{\Z}(\gamma_C)=\Span_{\Z}(\beta_C)$ for each minimal path $C$ from $(i,i+1)$ to $(1,4)$, $i=1,2$.

For $n=4$, the simplicial model is not smooth, but is quite close to being so.
There are again 2 orbits of facets represented by $F_1=\Conv(\alpha\ge \alpha_1)$ and $F_2=\Conv(\alpha\ge \alpha_2)$.
As was already pointed out for general $n$, $\Cone(F_1)$ determines one  $\Aut(A_4)$-orbit of smooth cones.

From $\Cone(F_2)$ there are 3 $\Aut(A_4)$-orbits from the 3 minimal paths from $(2,3)$ to $(1,5)$.
The path $C_1=\{(2,3),(2,4),(2,5),(1,5)\}$ gives the
orbit representative:
$\sigma_{C_1}=\Cone(\gamma_{C_1})$
where
$$\gamma_{C_1}=\{\e_2-\e_3,2\e_2-\e_3-\e_4,3\e_2-\e_3-\e_4-\e_5,3(\e_1+\e_2)-2(\e_3+\e_4+\e_5)\}$$
The corresponding polytope from the unimodular triangulation of $F_2$ is 
$P_{C_1}=\Conv(\beta_{C_1})$,
where
$$\beta_{C_1}=\{\e_2-\e_3,\e_2-\e_4,\e_2-\e_5,\e_1-\e_5\}$$
From this comparison and from those of the subpaths (or directly), we see that
$\Z\gamma_{(C_1)[3]}=\Z\beta_{(C_1)[3]}$.
Numbering the primitive generators in $\gamma_{C_1}$ as $\gamma_i,i=1,\dots,4$ and those in $\beta_{C_1}$ as $\beta_i,i=1,\dots,4$, we see that
$$\gamma_1=\beta_1,\gamma_2=\beta_1+\beta_2,\gamma_3=\beta_1+\beta_2+\beta_3,\gamma_4=2\beta_1+2\beta_2-\beta_3+3\beta_4$$
Then $\Z\gamma_{C_1}=\sum_{i=1}^4\Z\gamma_i=\sum_{i=1}^3\Z\beta_i+\Z\gamma_4=\sum_{i=1}^3\Z\beta_i+3\beta_4$.  We conclude that $\mult(\sigma_{C_1})=3$.
Looking at the faces of $\sigma_{C_1}$, we easily see that all those contained in the cone $\Cone(\gamma_1,\gamma_2,\gamma_3)$ corresponding to the subpath $(C_1)[3]=\{(2,3),(2,4),(2,5)\}$  are smooth.  It is not hard to see that $\Cone(\gamma_i,\gamma_4),i=1,2$ are also smooth.
So the only non-smooth faces of $\sigma_{C_1}$ are those containing $\Cone(\gamma_3,\gamma_4)$.

We remark that finding the interior lattice points of $P_{\sigma_{C_1}}=\{\sum_{i=1}^4\lambda_i \gamma_i:\lambda_i\in [0,1)\}$ is an easy calculation that can be done by hand using the $\Z$-basis $\beta_{C_1}$ for $N$, due to the fact (shown earlier in general) that the change of basis matrix from $\gamma_{C_1}$ to $\beta_{C_1}$ is triangular.  
  The 3 lattice points are $\frac{r}{3}(\gamma_3+\gamma_4)=r(\beta_1+\beta_2+\beta_4)=r(\e_1+2\e_2-\e_3-\e_4-\e_5),r=0,1,2$.
  We can do the star-subdivision corresponding to the non-trivial interior lattice point $\v_1=\e_1+2\e_2-\e_3-\e_4-\e_5$.
  From the above comments, it is clear that this replaces the maximal cone $\sigma_{C_1}$ with 2 cones:
  $\Cone(\gamma_1,\gamma_2,\gamma_3,\v_1)$ and $\Cone(\gamma_1,\gamma_2,\v_1,\gamma_4)$.

  The path $C_2=\{(2,3),(1,3),(1,4),(1,5)\}$ gives the
orbit representative:
$\sigma_{C_2}=\Cone(\gamma_{C_2})$
where
$$\gamma_{C_2}=\{\e_2-\e_3,\e_1+\e_2-2\e_3,\e_1+\e_2-\e_3-\e_4,3(\e_1+\e_2)-2(\e_3+\e_4+\e_5)\}$$
The corresponding polytope from the unimodular triangulation of $F_2$ is 
$P_{C_2}=\Conv(\beta_{C_2})$,
where
$$\beta_{C_2}=\{\e_2-\e_3,\e_1-\e_3,\e_1-\e_4,\e_1-\e_5\}$$
We again  see that the cone corresponding to the subpath $(C_2)[3]=\{(2,3),(1,3),(1,4)\}$ is smooth so that 
$\Z\gamma_{(C_2)[3]}=\Z\beta_{(C_2)[3]}$.
Numbering the primitive generators in $\gamma_{C_2}$ as $\gamma_i,i=1,\dots,4$ and those in $\beta_{C_2}$ as $\beta_i,i=1,\dots,4$, we see that
$$\gamma_1=\beta_1,\gamma_2=\beta_1+\beta_2,\gamma_3=\beta_1+\beta_3,\gamma_4=3\beta_1-\beta_2+2\beta_3+2\beta_4$$

Then $\Z\gamma_{C_2}=\sum_{i=1}^4\Z\gamma_i=\sum_{i=1}^3\Z\beta_i+\Z\gamma_4=\sum_{i=1}^3\Z\beta_i+2\beta_4$.  We conclude that $\mult(\sigma_{C_2})=2$.
Looking at the faces of $\sigma_{C_2}$, we easily see that all those contained in $\sigma_{(C_2)[3]}$ are smooth.  It is not hard to see that $\Cone(\gamma_i,\gamma_4),i=1,3$ are also smooth.
So the only non-smooth faces of $\sigma_{C_2}$ are those containing $\Cone(\gamma_2,\gamma_4)$.

  The 2 interior lattice points of $P_{\sigma_{C_2}}$ are $\frac{r}{2}(\gamma_2+\gamma_4)=r(2\beta_1+\beta_3+\beta_4)=r(2\e_1+2\e_2-2\e_3-\e_4-\e_5),r=0,1$.
  We can do the star-subdivision corresponding to the non-trivial interior lattice point $\v_2=2\e_1+2\e_2-2\e_3-\e_4-\e_5$.
  From the above comments, it is clear that this replaces the maximal cone $\sigma_{C_2}$ with 2 smooth maximal cones:
  $\Cone(\gamma_1,\gamma_2,\gamma_3,\v_2)$ and $\Cone(\gamma_1,\v_2,\gamma_3,\gamma_4)$.

    The path $C_3=\{(2,3),(2,4),(1,4),(1,5)\}$ gives the
orbit representative:
$\sigma_{C_3}=\Cone(\gamma_{C_3})$
where
$$\gamma_{C_3}=\{\e_2-\e_3,2\e_2-\e_3-\e_4,\e_1+\e_2-\e_3-\e_4,3(\e_1+\e_2)-2(\e_3+\e_4+\e_5)\}$$
The corresponding polytope from the unimodular triangulation of $F_2$ is 
$P_{C_3}=\Conv(\beta_{C_3})$,
where
$$\beta_{C_3}=\{\e_2-\e_3,\e_2-\e_4,\e_1-\e_4,\e_1-\e_5\}$$
We again  see that the cone corresponding to the subpath $(C_3)[3]=\{(2,3),(2,4),(1,4)\}$ is smooth so that 
$\Z\gamma_{{C_3}[3]}=\Z\beta_{{C_3}[3]}$.
Numbering the primitive generators in $\gamma_{C_2}$ as $\gamma_i,i=1,\dots,4$ and those in $\beta_{C_2}$ as $\beta_i,i=1,\dots,4$, we see that
$$\gamma_1=\beta_1,\gamma_2=\beta_1+\beta_2,\gamma_3=\beta_1+\beta_3,\gamma_4=2\beta_1+\beta_2+\beta_3+2\beta_4$$

Then $\Z\gamma_{C_3}=\sum_{i=1}^4\Z\gamma_i=\sum_{i=1}^3\Z\beta_i+\Z\gamma_4=\sum_{i=1}^3\Z\beta_i+2\beta_4$.  We conclude that $\mult(\sigma_{C_3})=2$.
Looking at the faces of $\sigma_{C_3}$, we easily see that all those contained in $\sigma_{{C_3}[3]}$ are smooth.  It is not hard to see that $\Cone(\gamma_i,\gamma_4),i=1,2,3$ are also smooth.
So the only non-smooth faces of $\sigma_{C_3}$ are those containing $\Cone(\gamma_2,\gamma_3,\gamma_4)$.

  The 2 interior lattice points of $P_{\sigma_{C_3}}$ are $\frac{r}{2}(\gamma_2+\gamma_3+\gamma_4)=r(2\beta_1+\beta_2+\beta_3+\beta_4)=r(2\e_1+3\e_2-2\e_3-2\e_4-\e_5),r=0,1$.
  We can do the star-subdivision corresponding to the non-trivial interior lattice point $\v_3=2\e_1+3\e_2-2\e_3-2\e_4-\e_5$.
  From the above comments, it is clear that this replaces the maximal cone $\sigma_{C_3}$ with 3 smooth maximal cones:
  $\Cone(\gamma_1,\gamma_2,\gamma_3,\v_3)$, $\Cone(\gamma_1,\gamma_2,\v_3,\gamma_4)$,
$\Cone(\gamma_1,\v_3,\gamma_3,\gamma_4)$.

  In summary, we have obtained a smooth $\Aut(A_4)$-invariant projective model for the algebraic torus corresponding to the character lattice $(\Lambda(A_4),\Aut(A_4))$.  This model has 8 $\Aut(A_4)$-orbits of (smooth) maximal cones.
  The cone over the standard parabolic facet $F_1$ of the root polytope produces a unique $\Aut(A_4)$ orbit of smooth maximal cones.
  The cone over the standard parabolic facet $F_2$ of the root polytope  produces 3 $\Aut(A_4)$-orbits of simplicial maximal cones.  These in turn are split into 2,2,3 smooth maximal cones, respectively.

  We can determine all the $\Aut(A_n)$-orbits of cones in our fan.
  As before the rays of our simplicial fan were determined as
  $$\{s_{I,J}: \emptyset\ne I,J\subseteq [1,n+1], I\cap J=\emptyset\}$$
  For the $\Aut(A_n)$ action, we can set $s_{r,t}=s_{[1,r],[r+1,t+l]}$ and note that
  $$\{s_{r,t}: 1\le r\le t, r+t\le n+1\}$$
  are a set of $\Aut(A_n)$-representatives, whereas for the $W(A_n)=S_{n+1}$-action, we note that
  $$\{s_{r,t}: r,t\ge 1, r+t\le n+1\}$$
  are a set of $W(A_n)$-representatives.
  
  For the case of $n=4$, our smooth fan has 3 more $\Aut(A_4)$-orbits of rays with representatives $\v_1=\e_1+2\e_2-\e_3-\e_4-\e_5$, $\v_2=2\e_1+2\e_2-2\e_3-\e_4-\e_5$, $\v_3=2\e_1+3\e_2-2\e_3-2\e_4-\e_5$.
  
\begin{rem}
The representatives of the new $\Aut(A_4)$-orbit of rays can be described as the three possible vectors of the form 
$$\{\v-\w: \v\in \oplus_{i\in I}\N\e_i, \w\in \oplus_{j\in J}\N \e_j: \wt_I(\v)=\wt_J(\w)<6, v_i\le v_{i+1}\le 3, 2\ge w_j\ge w_{j+1}, i\in I,j\in J \}$$
for $I=\{1,2\}$ and $J=\{3,4,5\}$ where the weight of a vector is defined as $\wt_I(\v)=\wt_I(\sum_{i\in I}v_i\e_i)=\sum_{i\in I}v_i$.
We observe that $\gamma_4=s_{I,J}=3(\e_1+\e_2)-2(\e_3+\e_4+\e_5)$ is the difference of two vectors of weight 6=$\lcm(2,3)$.
\end{rem}

    If $Y(A_4)$ is the Demazure model for the algebraic torus $T$ corresponding to $(\widehat{T_L},\Gal(L/\Bbbk))=(\Lambda(A_4),\Aut(A_4))$ that we have constructed, then
$$0\to \Lambda(A_4)\stackrel{\divT}{\to}  \Div_{T_L}(Y(A_4))\to \Pic(Y(A_4))\to 0$$
    determines a $G$-flasque resolution of $\Lambda(A_4)$ for any $G\le \Aut(A_4)$
    where $\Div_{T_L}(Y(A_4))$ is a permutation lattice  determined by the $G$-orbits of rays in the fan.
    Note that the rays of our smooth projective $G$-invariant fan have primitive generators which are elements of $\Z A_4$.
    
    Each representative $\alpha$ of the $G$-orbits $\Omega$ on the rays of the smooth projective $G$-invariant fan determines a transitive permutation summand $\oplus_{\beta\in \Orb_G(\alpha)}D_{\beta}\cong \Z[G/\Stab_G(\alpha)]$.
    So $\Div_{T_L}(Y(A_4))=\oplus_{\alpha\in \Omega}\oplus_{\beta\in \Orb_G(\alpha)}D_{\beta}$.
    Note that the divisor class map is dual to the natural $G$-invariant surjection that induced by mapping  $\alpha\in \Omega$ to $D_{\alpha}\in \Div_{T_L}(Y(A_4))$.

    We will end this section by describing the stabilizer subgroups of the representatives of the $\Aut(A_4)$-orbits of the rays of our fan when $G=\Aut(A_4)$ in order to describe the corresponding $G$-flasque resolution.  Note that it suffices to find the stabilizer subgroups of the corresponding primitive generators which are elements of $\Z A_4$.

    We first determine stabilizer subgroups of vectors in $\Z A_n$ with respect to $S_{n+1}$ and $\Aut(A_n)$ actions.
    Note that if $\x\in \Z A_n$, then $\x=\sum_{a\in [1,n+1]}c_a\e_a$ for some $c_a\in \Z$ such that $\sum_{a\in [1,n+1]}c_a=0$.  Let $$C(\x)=\{r\in \Z: c_a=r \mbox{ for some }a\in [1,n]\}$$ be the (finite) set of coefficients of $\x$.  For each $r\in C(\x)$, let
    $I_r=\{a\in [1,n+1]: c_a=r\}$.  Then $\x=\sum_{r\in C(\x)}\sum_{a\in I_r}\e_a$.   Then $\Stab_{S_{n+1}}(\x)\cong \prod_{r\in C(\x)}S_{|I_r|}$.
    To express the stabilizer subgroup of $\x$ in $\Aut(A_n)$, we need to define $C_s(\x)=\{r: r>0,|I_r|=|I_{-r}|\}$ and $C_a(\x)=C(\x)\setminus C_s(\x)$.
    Then $$\Stab_{\Aut(A_n)}(\x)\cong \prod_{r\in C_s(\x)}(S_{|I_r|}^2\rtimes C_2)\times \prod_{r\in C_a(\x)}(S_{|I_r|})$$

    Since the $\Aut(A_4)$-orbits of rays are represented by the 6 rays $\{s_{r,t}:1\le r,t,r+t\le 5\}$ where $s_{r,t}=s_{[1,r],[r+1,r+t]}$ from the simplicial projective fan
    plus the 3 $\Aut(A_4)$-orbits of rays: $\v_1=\e_1+2\e_2-(\e_3+\e_4+\e_5)$, $\v_2=2(\e_1+\e_2)-2\e_3-(\e_4+\e_5)$ and $\v_3=2\e_1+3\e_2-2\e_3-2\e_4-\e_5$, we can use the above observations to determine their stabilizer subgroups.
    From these observations, we see that $\Stab_{S_{n+1}}(s_{r,t})=S_r\times S_t \times S_{n+1-r-t}$ where $r,t\ge 1$ and $r+t\le n+1$.
    If $r<t$ then $\Stab_{\Aut(A_n)}(s_{r,t})=\Stab_{S_{n+1}}(s_{r,t})=S_r\times S_t\times S_{n+1-r-t}$.
    If $r=t$ then $\Stab_{\Aut(A_n)}(s_{r,r})=[(S_r\times S_r)\rtimes S_2]\times S_{n+1-2r}$.
    We then note that $\Stab_{\Aut(A_4)}(\v_1)=\Stab_{S_{5}}(\v_1)=S_3$, $\Stab_{\Aut(A_4)}(\v_2)=\Stab_{S_{5}}(\v_2)=S_2\times S_2$, $\Stab_{\Aut(A_4)}(\v_3)=\Stab_{S_{5}}(\v_3)=S_2$.

    In summary, we have proven the following:
    \begin{prop} There exists a Demazure model $Y(A_4)$ of the algebraic torus $T$ with splitting field $L/\bk$, character lattice and splitting group $(\Lambda(A_4),\Aut(A_4))$.  The corresponding fan has 8 $\Aut(A_4)$-orbits of smooth maximal cones, resulting from the subdivision of the simplicial $\Aut(A_4)$-invariant fan.  $\Aut(A_4)$-orbits of maximal cones in the simplicial fan correspond to minimal staircase paths from $(i,i+1)$ to $(1,5)$, $i=1,2$.      
      \begin{itemize}
      \item For $C=\{(1,2),(1,3),(1,4),(1,5)\}$: $\Cone(\gamma_{C})$
      \item For $C_1=\{(2,3),(2,4),(2,5),(1,5)\}$: 2 cones from the subdivision of $\Cone(\gamma_{C_1})$ with respect to $\v_1=(\e_1+2\e_2)-(\e_3+\e_4+\e_5)$.
      \item For $C_2=\{(2,3),(1,3),(1,4),(1,5)\}$: 2 cones from the subdivision of $\Cone(\gamma_{C_2})$ with respect to $\v_2=(2\e_1+2\e_2)-(2\e_3+\e_4+\e_5)$.
      \item For $C_3=\{(2,3),(2,4),(1,4),(1,5)\}$: 3 cones from the subdivision of $\Cone(\gamma_{C_3})$ with respect to $\v_3=(2\e_1+3\e_2)-(2\e_3+2\e_4+\e_5)$.
      \end{itemize}
    \end{prop}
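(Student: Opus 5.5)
The plan is to assemble the pieces already established in this section into a single $\Aut(A_4)$-equivariant argument. We start from the projective $\Aut(A_4)$-invariant fan $\Sigma_{P(A_4)^0}$ of cones over faces of the root polytope $P(A_4)$. The character lattice is $\Lambda(A_4)$, so the cocharacter lattice is $\Z A_4$, and $P(A_4)^0$ is a lattice polytope because every $m_i=1$.

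First we take the barycentric subdivision, using the barycentres $s_{I,J}$. By the proposition constructing $\Sigma^{\mathrm{simp}}_{A_n}$, this gives a simplicial, projective, $\Aut(A_4)$-invariant fan. By Remark~\ref{rem:condition*}, it satisfies condition (*) of \cite{CTHS05}. Its maximal-cone orbits are indexed by $\MP(1)$ and $\MP(2)$. Here $\MP(1)$ has the single path $C$, whose cone $\Cone(\gamma_C)$ is smooth because $\gamma_r=\sum_{i\le r}\beta_i$ and $\beta_C$ is a $\Z$-basis. $\MP(2)$ has the three paths $C_1,C_2,C_3$.

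Second, we compute multiplicities using the lemma comparing $\Z\gamma_{C_t}$ with $\Z\beta_{C_t}$. For each $C_k$ the first three generators span $\Z\beta_{C_k[3]}$, so the index is determined by the $\beta_4$-coefficient of $\gamma_4$. Expressing $\gamma_4$ in the basis $\beta_{C_k}$ gives $3,2,2$, so $\mult(\sigma_{C_1})=3$ and $\mult(\sigma_{C_2})=\mult(\sigma_{C_3})=2$. The change-of-basis matrix is triangular, so the lattice points of the fundamental parallelotopes are read off directly. They are the multiples of $\v_1,\v_2,\v_3$. We then check that the unique minimal non-smooth faces are $\Cone(\gamma_3,\gamma_4)$, $\Cone(\gamma_2,\gamma_4)$ and $\Cone(\gamma_2,\gamma_3,\gamma_4)$ respectively. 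This requires checking that each two-generator face $\Cone(\gamma_i,\gamma_4)$ listed in the text is unimodular, via the $\gcd$ of $2\times 2$ minors in the $\beta$-coordinates.

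Third, we star-subdivide $\Aut(A_4)$-equivariantly at the full orbits of $\v_1$, $\v_2$ and $\v_3$. By the argument in the proof of \cite[Prop 3]{CTHS05}, which uses condition (*), this yields a projective $\Aut(A_4)$-invariant fan.

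The main obstacle is showing that these subdivisions do not interfere and that the resulting cones are smooth. Each $\v_k$ lies in the relative interior of the non-smooth face identified above, so the star subdivision at $\v_k$ replaces every cone containing that face by the cones $\Cone(\tau,\v_k)$. Here $\tau$ runs over the faces of the cone that omit exactly one generator of the bad face, which gives the $2,2,3$ cones listed in the statement. Each new cone has determinant $\mult(\sigma)/\mult(\sigma)=1$ relative to $\beta_{C_k}$, because $\v_k=\frac1{m}\sum(\text{bad generators})$, so the new cones are smooth.

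Two further points must be checked. First, the bad faces coming from different orbit representatives, or from different group translates, must be distinct or compatible. Second, the bad faces must meet no cone outside the given orbits. Both are ensured by condition (*) together with the stabilizer computations: the stabilizer of each $\v_k$ equals the pointwise stabilizer of its carrier face. Since every cone containing a bad face is one of the $\sigma_{C_k}$ translates, no other cone is altered. Counting orbits gives $1+2+2+3=8$, and smoothness plus projectivity plus invariance shows that $Y(A_4)=X_\Sigma$ is a Demazure model.
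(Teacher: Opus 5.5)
You follow the paper's route: barycentric subdivision by the $s_{I,J}$, multiplicities $3,2,2$ from the triangular change of basis to $\beta_{C_k}$, the points $\v_k$, and equivariant star subdivision justified by the proof of \cite[Prop 3]{CTHS05}. Your observation that replacing one bad generator by $\v_k=\frac1m\sum(\text{bad generators})$ divides the determinant by $m=\mult(\sigma_{C_k})$ is exactly why the $2,2,3$ new cones are smooth.

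One step is misjustified. Condition (*) only says that two translates $\tau$, $g\tau$ of the same cone cannot be faces of a common cone unless they are equal. Together with $\Stab(\v_k)=\Stab(\tau_k)$, this makes each single-orbit subdivision well defined. It says nothing about a translate of $\tau_1=\Cone(\gamma_3,\gamma_4)$ (from $C_1$) sharing a cone with a translate of $\tau_2$ or $\tau_3$. Nor does it show that every cone containing $\tau_k$ is a translate of $\sigma_{C_k}$. You need both facts, for the three subdivisions to be independent and for the count $1+2+2+3$; the paper also leaves them implicit.

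Both facts follow from the flag structure of the barycentric fan. Attach to each cone the set of pairs $(|I_r|,|J_r|)$ of its flag; $S_5$ preserves this set and $-\id$ swaps coordinates. A maximal cone then gives a unit-step lattice path from $(1,1)$. The bad faces have signatures $\{(1,3),(2,3)\}$, $\{(2,1),(2,3)\}$ and $\{(1,2),(2,2),(2,3)\}$. Each lies on a unique such path, namely that of $C_1$, $C_2$, $C_3$ respectively, and the swapped versions lie only on paths ending at $(3,2)$. Hence:
\begin{itemize}
\item the maximal cones containing $\tau_k$ are exactly the $\Stab(\v_k)$-translates of $\sigma_{C_k}$ ($6$, $4$, $2$ of them);
\item no cone contains translates of two different $\tau_k$;
\item the orbit of $\Cone(\gamma_C)$ is untouched.
\end{itemize}

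A smaller point: your unimodularity check of the faces $\Cone(\gamma_i,\gamma_4)$ is unnecessary. For $C_3$ it would not even identify the minimal non-smooth face, since the $3$-faces $\Cone(\gamma_1,\gamma_2,\gamma_4)$ and $\Cone(\gamma_1,\gamma_3,\gamma_4)$ would also need checking. Every face of $\sigma_{C_k}$ not containing $\tau_k$ is a face of one of the new determinant-one cones. So smoothness of the whole fan already follows from your determinant computation.
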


    \begin{cor}
      The fan corresponding to the Demazure model $Y(A_4)$ of the algebraic
      of the algebraic torus $T$ with splitting field $L/\bk$, character lattice and splitting group $(\Lambda(A_4),\Aut(A_4))$
      has 6 $\Aut(A_4)$-orbits of rays corresponding to
      $$s_{r,t}=s_{[1,r],[r+1,r+t]},1\le r\le t, r+t\le 5$$
      and 3 $\Aut(A_4)$-orbits of rays corresponding to
      \begin{itemize}
      \item $\v_1=(\e_1+2\e_2)-(\e_3+\e_4+\e_5)$,
      \item $\v_2=2(\e_1+\e_2)-2\e_3-(\e_4+\e_5)$
      \item $\v_3=2\e_1+3\e_2-2(\e_3+\e_4)-\e_5$
      \end{itemize}
      with stabilizer subgroups:
      \begin{itemize}
      \item $\Stab_{\Aut(A_4)}(s_{r,t})=S_{r}\times S_t\times S_{n-(r+t)},r<t$
      \item $\Stab_{\Aut(A_4)}(s_{r,r})=[(S_r\times S_r)\rtimes S_2]\times S_{n-2r}$
      \item $\Stab_{\Aut(A_4)}(\v_1)=S_3$
      \item $\Stab_{\Aut(A_4)}(\v_2)=S_2\times S_2$
      \item $\Stab_{\Aut(A_4)}(\v_3)=S_2$.
\end{itemize}
This information determines an $\Aut(A_4)$-coflasque resolution of $\Z A_4$ and so by duality an $\Aut(A_4)$-flasque resolution of $\Lambda(A_4)$.

    \end{cor}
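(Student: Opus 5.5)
The corollary asserts three things: the list of ray orbits, their stabilizers, and the resulting resolution. I will read all three off the fan constructed in the preceding proposition, so the proof is a bookkeeping argument about the smooth $\Aut(A_4)$-invariant fan $\Sigma$ of $Y(A_4)$.

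\textbf{Rays.} First I would list the rays of $\Sigma$. The simplicial fan $\Sigma^{\mathrm{simp}}_{A_4}$ comes from barycentric subdivision of $\Sigma_{P(A_4)^0}$. Its rays are exactly the barycentres $s_{I,J}$, where $I,J$ are disjoint nonempty subsets of $[1,5]$, one for each nonzero cone $\Cone(Q_{I,J})$.

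Up to $S_5$, the vector $s_{I,J}$ depends only on $(|I|,|J|)=(r,t)$. The element $-\id\in\Aut(A_4)$ sends $s_{I,J}$ to $s_{J,I}$, so up to $\Aut(A_4)$ we may assume $r\le t$. The condition $r+t\le 5$ leaves $(1,1),(1,2),(1,3),(1,4),(2,2),(2,3)$, giving the six orbits $s_{r,t}$.

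The star subdivisions in the proposition are performed along the $\Aut(A_4)$-orbits of $\v_1,\v_2,\v_3$. This is legitimate by the \cite{CTHS05} argument, since condition (*) holds by the earlier remark. Each such subdivision adds exactly the rays in the orbit of $\v_i$ and no others. The three new orbits are distinct from each other and from the $s_{r,t}$: their coefficient multisets $\{2,1,-1,-1,-1\}$, $\{2,2,-2,-1,-1\}$, $\{3,2,-2,-2,-1\}$ are pairwise different and are not of the shape of any $s_{r,t}$, and this multiset is invariant under $S_5$ and, up to a global sign, under $-\id$.

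\textbf{Stabilizers.} Next I would apply the formula $\Stab_{\Aut(A_n)}(\x)$ derived just above from the level sets $I_r$ of the coefficients. For $s_{r,t}$ the coefficients take three values, $t/g$ on a set of size $r$, $-r/g$ on a set of size $t$, and $0$ on a set of size $5-r-t$, where $g=\gcd(r,t)$.
\begin{itemize}
\item If $r<t$, the multiset is not symmetric under negation. The stabilizer is then just the $S_5$-stabilizer $S_r\times S_t\times S_{5-r-t}$.
\item If $r=t$, negation followed by swapping $I$ and $J$ fixes the vector. This gives $[(S_r\times S_r)\rtimes S_2]\times S_{5-2r}$.
\end{itemize}
The $n$ appearing in the statement should be read as $5$, so that the last factors are $S_{5-(r+t)}$ and $S_{5-2r}$.

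For the $\v_i$, none of the multisets is symmetric under negation, so only permutations fixing each level set survive:
\begin{itemize}
\item for $\v_1$, the level set $\{3,4,5\}$ gives $S_3$;
\item for $\v_2$, the level sets $\{1,2\}$ and $\{4,5\}$ give $S_2\times S_2$;
\item for $\v_3$, the level set $\{3,4\}$ gives $S_2$.
\end{itemize}

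\textbf{Resolution.} Finally, $Y(A_4)$ is smooth and projective, so the divisor class sequence
$0\to\Lambda(A_4)\to\Div_{T_L}(Y(A_4))\to\Pic(Y(A_4))\to0$
is a flasque resolution (Voskresenskii). The middle term is the permutation lattice $\bigoplus\Z[\Aut(A_4)/\Stab(\rho)]$, one summand per ray orbit. Dualizing gives the coflasque resolution of $\Z A_4$. Its surjection is $D_\rho\mapsto \bu_\rho$, as in the earlier remark.

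\textbf{Main obstacle.} The one step needing care is completeness of the ray list. I must make sure that no further rays are introduced when the $G$-orbits of the $\v_i$ are used for star subdivision. I also need the cones in different orbits not to interact, meaning that subdividing at $g\v_i$ only affects cones containing the face through which $\v_i$ was chosen. I would handle this by checking, via condition (*), that each $g\v_i$ lies in the relative interior of the unique face $g\Cone(\gamma_3,\gamma_4)$ (respectively $g\Cone(\gamma_2,\gamma_4)$, $g\Cone(\gamma_2,\gamma_3,\gamma_4)$) of $\Sigma^{\mathrm{simp}}_{A_4}$.
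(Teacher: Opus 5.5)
Your proposal is correct and follows essentially the same route as the paper. The rays are the barycentres $s_{I,J}$, reduced modulo $\Aut(A_4)=S_5\times\{\pm\id\}$ to the six representatives with $r\le t$, together with the orbits of $\v_1,\v_2,\v_3$ added by the star subdivisions; the stabilizers are read off from the coefficient level sets, and the resolution is (the dual of) the divisor class sequence. Your reading of $n$ as $5$ in the stabilizer formulas, and your single global negation-symmetry test in place of the product formula over $C_s(\x)$ stated just before the corollary, are the right way to obtain the listed groups.
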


 \section{Comparison with Kunyavskii's models}~\label{section:kunyavskii}

 This section refers to the construction of Demazure models for the algebraic $\bk$-tori in dimension 3 corresponding to the maximal finite subgroups of $\GL(3,\Z)$ by Kunyavskii in~\cite{Kun87}.
 The algebraic $\bk$-tori of dimension 3 corresponding to maximal finite subgroups are determined as in Section~\ref{section:dade}.
 
  For the algebraic torus corresponding to $(\Z A_2\oplus \Z A_1,\Aut(A_2)\times C_2)$, Kunyavskii takes the model $\dP_6\times \PP^1$, where $\dP_6$ is the Del Pezzo surface of degree 6. We will take $X(A_2)\times X(A_1)$.  Since $X(A_1)=\PP^1$ and $X(A_2)$ is known to coincide with $\dP_6$, these are equivalent.

  For the algebraic torus corresponding to $((\Z A_1)^3,\Aut((A^1)^3)=(\Z B_3,W(B_3))$, Kunyavskii takes $(\P^1)^3$ and we take $X(A_1)^3$, which are the same.

  For the algebraic torus corresponding to $(\Z A_3,\Aut(A_3))$, Kunyavskii takes a different point of view but the result is equivalent.

  For $n=3$, the root systems $A_3$ and $D_3$ are isomorphic.
  For $\Z D_3$, Kunyavskii considers the dual lattice $\Lambda(D_3)$ with respect to $\Aut(D_3)$.  We note that $\Aut(D_3)=\Aut(B_3)=W(B_3)$ and $\Lambda(D_3)=\Lambda(B_3)$.  The fundamental dominant weights of $B_n$ are $\omega_r=\sum_{i=1}^r\e_i,r=1,\dots,n-1$ and $\omega_n=\frac{1}{2}(\sum_{i=1}^n\e_i)$.
  Then $\Lambda(B_n)=\sum_{i=1}^n\Z\e_i+\Z\omega_n$ and $\Z W \omega_n=\Lambda(B_n)$.  The $W=\Aut(B_3)$-orbit of $\omega_3$ is then $\{\frac{1}{2}\sum_{i=1}^3\epsilon_i\e_i:\epsilon_i=\pm 1,i=1,2,3\}$.
  Kunyavskii takes his lattice polytope for his partial model of Case $S$ to be:
  $$\Conv\{\sum_{i=1}^3\epsilon_i\e_i:\epsilon_i=\pm 1,i=1,2,3\}$$
  So the cones over faces of this polytope give a normal $\Aut(D_3)$-invariant projective toric variety.  The facets are cut out by the 6 affine hyperplanes $H_{\pm \e_i,1}$.  He obtains cones
  $$\sigma_{ri}=\Cone((-1)^r\e_i+\sum_{j\ne i}\epsilon_j\e_j:\epsilon_j=\pm 1,j\ne i),r=1,2;i=1,2,3.$$  $\sigma_{ri}$ has barycentre $(-1)^r2\e_i$.  The corresponding affine toric varieties can be seen to correspond to the hypersurface $xy-zt=0$ in $\A^4$. Each maximal cone is barycentrically subdivided into 4 smooth maximal cones. The resulting polytope that he obtained in this process is the permutahedron.  So his smooth projective toric model of type $S$ is $X(A_3)$.

  For Kunyavskii's partial model corresponding to Case P, his polytope is $\Conv(\pm \e_i\pm \e_j: 1\le i\ne j\le 3)$.  This is the $D_3$-root polytope.  As $D_3=A_3$, we see that this corresponds to $(\Lambda(A_3),\Aut(A_3))$ since the polytope is a lattice polytope for the dual lattice $\Z A_3$.
  Our earlier discussion of this case shows that there are 2 $\Aut(A_3)=\Aut(D_3)=C_2^3\rtimes S_3$-orbits of facets of this polytope.
  Under the root system isomorphism from $A_3$ to $D_3$, we see that the two standard parabolic facets are $F_1=\Conv(\e_2-\e_3,\e_1-\e_3,\e_1+\e_2)$
cut out by the hyperplane $H_{\omega_2,1}$ where $\omega_2=\frac{1}{2}(\e_1+\e_2-\e_3)$ and $F_2=\Cone(\e_1\pm \e_r:r=2,3)$ cut out by the hyperplane $H_{\omega_1,1}$ where $\omega_1=\e_1$.
    As the $\Aut(D_3)$-orbit of $\omega_1$ contains 6 vectors, there are 6 maximal cones in the orbit of the smooth cone $\Cone(F_1)$.
    As the $\Aut(D_3)$-orbit of $\omega_2$ contains 8 vectors, there are 8 maximal cones in the orbit of the singular cone $\Cone(F_2)$.  The corresponding affine toric variety is again isomorphic to the hypersurface $xy-zt=0$ in $\A^4$.
    As $2\e_1$ may be taken as the barycentre, $\Cone(F_2)$ can be subdivided into 4 smooth $\Aut(D_3)$-cones.  The resulting fan has 32 smooth maximal cones and 12+6=18 rays.  Note that the rays are in bijection with the roots of $C_3$.  This model is smaller than ours for $n=3$, but its generalization would not work for $n\ge 4$, since $A_n$ and $D_n$ are no longer isomorphic for $n\ge 4$.
    
 \section{Generalized Del Pezzo Varieties, Klyachko Varieties and variants}
    
    Let $M=\Z A_{m-1}\otimes \Z A_{n-1}$.
    Note that $(\Z A_{m-1})^0=\Lambda(A_{m-1})\cong \oplus_{i=1}^m\Z \e_i/\Z(\sum_{i=1}^m\e_i)=\sum_{i=1}^m\Z\overline{\e_i}$, where
    $\sum_{i=1}^m\overline{\e_i}=\0$.
    Similarly we can express $(\Z A_{n-1})^0=\sum_{i=1}^n\Z\overline{\f_i}$ where $\sum_{i=1}^n\overline{\f_i}=\0$.
    Then $N=M^0=\Lambda(A_{m-1})\otimes \Lambda(A_{n-1})=\sum_{i,j}\Z\overline{\e_i}\otimes \overline{\f_j}$ has basis $\{\overline{\e_i}\otimes \overline{\f_j}:i=1,\dots,m-1,j=1,\dots,n-1\}$.
    It is convenient to set $\e_{ij}=\overline{\e_i}\otimes\overline{\f_j}$.
    In ~\cite{Kly83}, Klyachko introduced a family of lattice polytopes
    $$P_{m,n}=\Conv(\e_{i,j}:1\le i\le m, j=1,\dots,n\}$$
    
    The polar dual of $P_{m,n}$ is the central transportation polytope
    $$\Omega_{m,n}=\{(y_{ij})\in M_{mn}(\R):\sum_{i=1}^my_{ij}=m,\sum_{j=1}^ny_{ij}=n,y_{ij}\ge 0\}$$

    This is a special case of the more general transportation polytope
    $$\Omega_{m,n}(\bc,\d)=\{(y_{ij})\in M_{mn}(\R):\sum_{i=1}^my_{ij}=d_j,\sum_{j=1}^ny_{ij}=c_i,y_{ij}\ge 0\}$$
    where $\bc\in \N^m,\d\in \N^n$ and $\sum_{i=1}^mc_i=\sum_{j=1}^nd_j$
    which is an important polytope in optimization problems. See, for example, ~\cite{DLK14}. 
    Note that $\Omega_{m,n}=\Omega_{m,n}(\n,\m)$ where $\n=(n,\dots,n)\in \N^m$ and $\m=(m,\dots,m)\in \N^n$.

    $\Omega_{m,n}$ is a lattice polytope of dimension $d=(m-1)(n-1)$ and is simple (every vertex is the intersection of precisely $d$ facets) if and only if $\gcd(m,n)=1$.  Thus $P_{m,n}$ is a simplicial polytope (every facet is a simplex) when $\gcd(m,n)=1$.  Let $\Sigma_{m,n}$ be the fan corresponding to cones over faces of $P_{m,n}$ (or equivalently the normal fan of $\Omega_{m,n}$) and let $X_{m,n}=X_{\Sigma_{m,n}}$.
If $\gcd(m,n)=1$, $\Sigma_{m,n}$ is smooth.  

If $\gcd(m,n)=1$, it was shown in \cite{Kly83,KV84}, using the above reasoning, that the algebraic torus corresponding to the lattice
$(\Z A_{m-1}\otimes \Z A_{n-1},S_m\times S_n)$ has Demazure model $X_{m,n}$.
We will call these toric varieties $X_{m,n}$, Klyachko varieties following ~\cite{CFST18}.

It is also clear that in the case of general $m,n$ that $X_{m,n}$ is a projective $S_m\times S_n$-equivariant toric $T_L$-model of the algebraic $\bk$-torus with Galois splitting extension $L/\bk$ and character lattice $(\Z A_{m-1}\otimes \Z A_{n-1},S_m\times S_n)$, although it is no longer smooth or even simplicial if $m,n$ are not relatively prime.

An important special case of Klyachko varieties are the generalized del
Pezzo varieties $V_{2r}=X_{2,2r+1}$, $r\ge 1$.  $V_2$ coincides with the
del Pezzo surface of degree 6.

For general $m,n$, $X_{mn}$ is a normal projective toric variety with
character lattice $\Z
A_{m-1}\otimes \Z A_{n-1}$ and $\Div_T(X_{mn})=\Z[S_m/S_{m-1}]\otimes
\Z[S_n/S_{n-1}]$.  So its divisor class group $\Cl(X_{mn})$ satisfies the $S_m\times S_n$-exact sequence of lattices: 
$$\0\to \Z A_{m-1}\otimes \Z A_{n-1}\stackrel{i_m\otimes i_n}{\to}
\Z[S_m/S_{m-1}]\otimes \Z[S_n/S_{n-1}]\to \Cl(X_{mn})\to \0$$

Since $$\0\to \Z A_{m-1}\stackrel{i_m}{\to} \Z
[S_m/S_{m-1}]\stackrel{\epsilon_m}{\to} \Z\to \0$$
is a natural $S_m$-exact sequence, we have the following $S_m\times S_n$-exact sequence: 
$$\0\to \Z A_{m-1}\otimes \Z A_{n-1}\to \Z[S_m/S_{m-1}]\otimes
\Z[S_n/S_{n-1}]\to \Z[S_m/S_{m-1}]\oplus \Z[S_n/S_{n-1}]\to \Z\to \0$$   
Then the following $S_m\times S_n$ sequence is exact:
$$\0\to \Cl(X_{mn})\to \Z[S_m/S_{m-1}]\oplus \Z[S_n/S_{n-1}]\to \Z\to
\0$$
If $\gcd(m,n)=1$, $X_{mn}$ is smooth so that $\Pic(X_{mn})=\Cl(X_{mn})$
and the sequence above splits, so that
$$\Pic(X_{mn})\oplus \Z\cong \Z[S_{m}/S_{m-1}]\oplus \Z[S_n/S_{n-1}]$$
is stably permutation as an $S_m\times S_n$-lattice.

So in the case $\gcd(m,n)=1$, $X_{mn}$ is a smooth projective toric model
of the algebraic torus $T_{mn}$ corresponding to the character lattice
$(\Z A_{m-1}\otimes \Z A_{n-1},S_m\times S_n)$.  Since $\Pic(X_{mn})$ is 
stably permutation, $T_{mn}$ is stably rational.  In the case in which
$n\equiv 1\mod m$, Klyachko and Voskresenskii in ~\cite{KV84} also show that $T_{mn}$ is indeed rational.  
Note that the generalized del Pezzo varieties $V_{2r}=X_{2,2r+1}$ satisfy
this condition, so that the associated algebraic torus $T_{2,2r+1}$ is
rational.

We note that $(\Z A_{n},\Aut(A_n))=(\Z A_{n}\otimes \Z
A_1,S_{n+1}\times S_2)$.    So $V_{2r}=X_{2,2r+1}$ gives an alternative
Demazure model for the algebraic torus corresponding to $(\Z A_{2r},\Aut(A_{2r}))$ to that of $X(A_{2r})$.  In the smallest case, $V_2$
coincides with $X(A_2)$ and with $\dP_6$.  For  $r\ge 2$, $V_{2r}$
determines a smaller flasque resolution for $T_{2,2r+1}$ than does
$X(A_{2r})$.

For the remaining algebraic $\bk$-torus $T$ with Galois splitting extension $L/\bk$ corresponding to $(\Z A_2\otimes \Z
A_2,(S_3\times S_3)\rtimes C_2\times C_2)$, we remark that for its
restriction to $(\Z A_2\otimes \Z A_2,S_3\times S_3)$, we have a projective
toric $T_L$-model given by $X_{3,3}$.  It is unfortunately not simplicial.
But to find a toric model for the algebraic torus corresponding to $(\Z A_2\otimes \Z A_2,(S_3\times S_3)\rtimes C_2\times C_2)$ using Approach 2, we need to find a $G=(S_3\times S_3)\rtimes C_2\times C_2$-invariant finite subset of $N=\Lambda(A_2)\otimes \Lambda(A_2)$ which contains a $\Z$-basis of $N$.
We need to examine  the action of the group $(S_3\times S_3)\rtimes C_2\times C_2$ on $\Lambda(A_2)\otimes \Lambda(A_2)$, and on the polytope $P_{33}=\Conv(\e_{ij}:1\le i,j\le 3)$.  Thinking of $\{\e_{ij}:1\le i,j\le 3\}$ as the vector entries of a matrix, we see that the first copy of $S_3$ permutes the rows, the second copy of $S_3$ permutes the columns, the first copy of $C_2$ transposes the matrix and the last copy acts as negative the identity.
So we need to extend our polytope to $Q_{33}=\Conv(\pm \e_{i,j}:1\le i,j\le 3)$.

Now $Q_{33}$ is a lattice polytope which is invariant under the group action
of $G=[(S_3\times S_3)\rtimes C_2]\times C_2$ and whose rays include a basis of $\Lambda(A_2)\otimes \Lambda(A_2)$.  We need to verify that $Q_{33}$ is reflexive.  If this is the case, then the $G$-invariant projective normal toric variety which is determined by the fan consisting of cones over faces of $Q_{33}$ determines  a $G$-invariant projective toric $T_L$-model of our algebraic torus.

We will prove more generally that $Q_{mn}=\Conv(\pm \e_{ij}:1\le i\le m,1\le j\le n)$ is reflexive.
This is equivalent to showing that the facets of $Q_{mn}$ are cut out by affine hyperplanes in $M=\Z A_{m-1}\otimes \Z A_{n-1}$.
We will start with setting up some notation to describe the faces of $Q_{mn}$.
Let $V=\{\pm \e_{ij}:1\le i\le m,1\le j\le n\}$ be the set of vertices of $Q_{mn}$.

Suppose $\bu=\sum_{i=1}^m\sum_{j=1}^na_{ij}\e_{ij}$ 
determines the supporting hyperplane of a face $Q$ of $Q_{mn}$.
Then $Q_{mn}\subseteq H_{\bu,-1}^+$ and $Q=Q_{mn}\cap H_{\bu,-1}$.
We observe that $\bu\in Q_{mn}^0$.
We may identify $\bu$ with an $m\times n$ matrix $(a_{ij})$.
Since $\sum_{j=1}^n\e_{ij}=\0$ and $\sum_{i=1}^m\e_{ij}=\0$ for all $1\le i\le m, 1\le j\le n$ then we see that the matrix $(a_{ij})$ has all row and column sums 0.  Since $\langle \bu,\pm \e_{ij}\rangle \ge -1$, we see that $-1\le a_{ij}\le 1$.  Also note: $\pm \e_{ij}$ is a vertex of $Q$ if and only if $\mp a_{ij}=1$.

\begin{prop}  $Q_{mn}$ is a reflexive polytope for all $m,n\in \N$.
\end{prop}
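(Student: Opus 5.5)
The plan is to show that the polar dual $Q_{mn}^0$ is a lattice polytope in $M_{\R}$. Its vertices are the normals $\bu_F$ of the facets $F$ of $Q_{mn}$, normalised so that $F=Q_{mn}\cap H_{\bu_F,-1}$. So integrality of these vertices is exactly the statement that every facet of $Q_{mn}$ is cut out by $H_{\bu,-1}$ with $\bu\in M$, which is the reflexivity claimed.

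First, $Q_{mn}$ is full-dimensional with $\0$ in its interior. It is centrally symmetric, and its vertex set $V$ contains the basis $\{\e_{ij}:i<m,j<n\}$ of $N$ together with the negatives of these vectors.

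Second, I identify $M=\Z A_{m-1}\otimes \Z A_{n-1}$ with the lattice of integer $m\times n$ matrices whose row and column sums all vanish. This works via $(\e_i-\e_k)\otimes(\f_j-\f_l)$ and the pairing $\langle (a_{ij}),\e_{rs}\rangle=a_{rs}$. The matrices $(\e_i-\e_m)\otimes(\f_j-\f_n)$, $i<m$, $j<n$, form a $\Z$-basis of the integer zero-sum matrices: subtracting the appropriate combination clears the top-left $(m-1)\times(n-1)$ block, and the zero-sum conditions then force the whole matrix to vanish. This basis is dual to $\{\e_{ij}\}$. By the discussion preceding the statement, $Q_{mn}^0$ is then the polytope of real $m\times n$ matrices $A=(a_{ij})$ with all row and column sums zero and $-1\le a_{ij}\le 1$ for all $i,j$.

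Third, and this is the main step, I show that every vertex $A$ of this polytope has integer entries. Let $E$ be the set of positions $(i,j)$ with $|a_{ij}|<1$, viewed as the edges of a bipartite graph on rows $\sqcup$ columns.

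$E$ contains no cycle. Suppose it did; since the graph is bipartite, the cycle has even length. Adding $\pm\varepsilon$ alternately along the cycle preserves all row and column sums, and for small $\varepsilon$ it also preserves the bounds. Then $A$ is the midpoint of two distinct points of $Q_{mn}^0$, which contradicts $A$ being a vertex.

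So $E$ is a forest, and integrality follows by peeling leaves. A leaf of $E$ is a row (or column) containing exactly one entry of $E$. All other entries in that row are $\pm1$ and the row sums to $0$, so the leaf entry is an integer. Remove that edge and repeat; an entry made integral in this way can be treated like a $\pm 1$ entry for the later steps. After finitely many steps every entry of $A$ is an integer, so $A\in\{-1,0,1\}^{m\times n}$ and lies in $M$.

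Equivalently, one could cite total unimodularity of the bipartite incidence matrix together with the box constraints and integer right-hand sides. I will instead give the elementary cycle-and-leaf argument above, since it is self-contained.

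The step I expect to need the most care is the second: making sure the identification of $M$ with the zero-sum integer matrices is exact. The danger is that $M$ might turn out to be a proper sublattice of those matrices, in which case the integral vertices found above would not necessarily lie in $M$. The explicit dual basis described above settles this. Once it is in place, the facet normals $\bu_F$ are the integral vertices of $Q_{mn}^0$ and lie in $M$, so $Q_{mn}$ is reflexive.
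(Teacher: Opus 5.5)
Your proof is correct, but it argues on the dual side, while the paper argues on the primal side.

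The paper fixes a facet $F$ with normal $\bu_F=(a_{ij})$ and assumes some $a_{ij}\notin\Z$. The positions of the non-integer entries then form a subgraph $\Gamma$ of $K_{mn}$ in which every non-isolated vertex has degree at least $2$, so $\Gamma$ contains an even cycle $C$. The vertices of $F$ are among the $\pm\e_{ij}$ with $a_{ij}=\mp1$, so they lie in the span of $\{\e_{ij}:(i,j)\notin C\}$. The paper then completes $C$ to a subgraph $C'$ with $m+n$ edges and uses the relations $\sum_i\e_{ij}=\0=\sum_j\e_{ij}$. This shows that span is generated by $mn-m-n<(m-1)(n-1)$ vectors, so $F$ cannot be a facet.

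You instead write $Q_{mn}^0$ explicitly as the box $[-1,1]^{m\times n}$ intersected with the zero-sum subspace. You then prove this polytope is integral by the standard total-unimodularity argument: a cycle among the non-tight positions gives a perturbation direction, so those positions form a forest, and peeling leaves forces integrality.

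The two proofs hinge on the same object. Your alternating $\pm1$ cycle matrix $D$ is a nonzero element of $M$ with $\langle D,\e_{ij}\rangle=0$ off the cycle. It is exactly a linear functional witnessing the rank deficiency that the paper establishes by counting edges of $C'$.

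Your route needs the full inequality description of $Q_{mn}^0$. It also needs the exact identification of $M$ with the integer zero-sum matrices, which you supply through the explicit dual basis; the paper uses this identification only implicitly, when it equates reflexivity with $a_{ij}\in\Z$. In return, your argument avoids the bookkeeping with $C'$. It yields directly that the vertices of $Q_{mn}^0$ are $\{-1,0,1\}$-matrices, and it extends verbatim to transportation-type polytopes with arbitrary integral bounds.

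The paper's route uses only necessary conditions on facet normals, namely that the vertices of a facet span $N_{\R}$. It never needs the reverse inclusion describing $Q_{mn}^0$.
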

\begin{proof}
The following argument uses ideas from the proof of Proposition 5 in ~\cite{KV84}.

Using the notation above, 
to show that $Q_{mn}$ is reflexive, we need to show that $Q_{mn}^0$ is a lattice polytope.  
The set of vertices of $Q_{mn}^0$  is precisely the set $\{\bu_F:F \mbox{ facet of } Q_{mn}\}$ where $u_F=\sum_{i=1}^m\sum_{j=1}^na_{ij}\e_{ij}\in M_{\R}$ is the facet normal of the facet $F$.  Let $\bu_F$, represented  by a $m\times n$ matrix $(a_{ij})$ with 0 row and column sums,  determine a supporting hyperplane for a facet $F$.
Then the set of vertices of $F$: $V_F=\{\pm \e_{ij}:a_{ij}=\mp 1\}$ must span $N_{\R}=(\Lambda(A_m)\otimes \Lambda(A_n))_{\R}$ which has dimension $(m-1)(n-1)$.  To show that $Q_{mn}$ is reflexive, we need to show that each such $\bu_F$ has integral coefficients or equivalently that if $\bu_F$ is represented by $(a_{ij})$ then $a_{ij}\in \Z$ for all $i,j$.

Suppose not.  Let $K_{mn}$ be a complete bipartite graph with disjoint vertex sets
$\{\overline{\e_i}:i=1,\dots,m\}$ and $\{\overline{f_j}:j=1,\dots,n\}$.  Recall that a subgraph $H$ of a graph $G$ has the same vertex set as $G$ but a subset of the edges, and the complement $\overline{H}$ of a subgraph $H$ of a graph $G$ has the same vertex set as $G$ but has edge set $E(\overline{H})=E(G)-E(H)$.

Note that any subgraph $G$ of $K_{mn}$ with edge set $E(G)$  defines a subset 
$$V[G]=\{\e_{ij}: (\overline{\e_i},\overline{\f_j})\in E(G)\}$$ of the generating set $\{\e_{ij}:1\le i\le m, 1\le j\le n\}$ of $N=\Lambda(A_{m-1})\otimes \Lambda(A_{n-1})$.
We define a subgraph $\Gamma$ of $K_{mn}$: $(\overline{\e_i},\overline{\f_j})$ is an edge of $\Gamma$ if and only if $a_{ij}\not\in \Z$.

Since each row and column of $(a_{ij})$ sums to 0, then any row or column with a non-integer entry must have at least 2 non-integer entries.  This shows that all vertices of $\Gamma$ have degree at least 2.  Then $\Gamma$ must contain a $2r$-cycle
$C=(\overline{\e_{a_1}},\overline{\f_{b_1}},\dots,\overline{\e_{a_r}},\overline{\f_{b_r}},\overline{\e_{a_1}})$ for some $r\ge 1$.
Let $\{a_1,\dots,a_m\}=\{1,\dots,m\}$ and $\{b_1,\dots,b_n\}=\{1,\dots,n\}$. 
Let $C'$ be the subgraph of $K_{mn}$ which contains all vertices of $K_{mn}$ and edge set that of $C$ plus additional edges $(\overline{\e_{a_1}},\overline{\f_{b_j}}):r+1\le j\le n$ and   $(\overline{\e_{a_i}},\overline{\f_{b_1}}):k+1\le i\le m$.  That is, the $\overline{\e_i}$ not in the cycle are connected to $\overline{\f_{b_1}}$ in the cycle and the $\overline{\f_j}$ not in the cycle are connected to $\overline{\e_{a_1}}$ in the cycle.
$C'$ has $2r+(m-r)+(n-r)=m+n$ edges.

Now let $F$ be a facet of $Q_{mn}$ with facet normal $\bu_F$ represented by the $m\times n$ matrix $(a_{ij})$.
Note that $\pm \e_{ij}\in V_F$ if and only if $a_{ij}\in \{\pm 1\}$ and
so $\e_{ij}\in V_F$ implies that $a_{ij}\in \Z$.
Then $V_F\subseteq V[K_{mn}-\Gamma]\subseteq V[K_{mn}-C]$.
Since $C$ is a subgraph of $C'$, we also have $V[K_{mn}-C']\subseteq V[K_{mn}-C]$.
We want to show that $\Span_{\Z}(V[K_{mn}-C'])=\Span_{\Z}(V[K_{mn}-C])$.

Since $E(K_{mn})-E(C)=(E(K_{mn})-E(C'))\cup (E(C')-E(C))$, we only need to show that $\e_{ij}\in \Span_{\Z}(V[K_{mn}-C'])$ if $(\overline{\e_i},\overline{\f_j})$ is an edge of $C'$ which isn't an edge of $C$.

The new edges of $C'$ which aren't edges of $C$ take the form $(\overline{\e_{a_1}},\overline{\f_{b_j}})$ where $r+1\le j\le n$ or $(\overline{\e_{a_i}},\overline{\f_{b_1}})$ where $r+1\le i\le m$.

In the first case, $\e_{a_1,b_j}=-\sum_{a\ne a_1}\e_{a,b_j},j=r+1,\dots,n$
and in the second case, $\e_{a_i,b_1}=-\sum_{b\ne b_1}\e_{a_i,b},i=r+1,\dots,n$.
In either case, the result is contained in $\Span_{\Z}(V[K_{mn}-C'])$.
But note that the set $V[K_{mn}-C']$ has size $|E(K_{mn})-E(C')|=mn-m-n$
and so cannot span $N=\Lambda(A_m)\otimes \Lambda(A_n)$ of rank $(m-1)(n-1)$.
So $V_F$ also cannot span $N_{\R}$, and so $F$ cannot be a facet of $Q_{mn}$.

By contradiction, we must have $a_{ij}\in \Z$ for all $1\le i\le m,1\le j\le n$.
\end{proof}

Let  $H_{\bu,-1}$ be a supporting hyperplane of a face of $Q_{mn}$ where we identify $\bu$ as a $m\times n$ matrix
$u=(a_{ij})$ with rows and column sums zero.
Then $Q_{mn}\subseteq H^+_{\bu,-1}$ and so $\pm a_{ij}\ge -1$ for all $i,j$ and so $-1\le a_{ij}\le 1$.
We have just shown that if $F$ is a facet of $Q_{mn}$ then its facet normal  $\bu_F=(a_{ij})\in M_{mn}(\Z)$ has only -1,0,1 as possible entries and rows and columns sum to 0.

We will now specialize to the case of $Q_{33}$. Let $\Sigma_{33}$ be the fan in $(\Lambda(A_3)\oplus \Lambda(A_3))_{\R}$ consisting of cones over faces of $Q_{33}$.  Note that $\Sigma_{33}$ is projective as the normal fan of $Q_{33}^0$ and is $G=(S_3\times S_3)\rtimes C_2\times C_2$-invariant.

\begin{rem} An alternative proof of this proposition appears in Jason Palombaro's thesis~\cite{Pal25}.  It is part of ongoing joint work with Jason Palombaro to further investigate Klyachko varieties and their variants.~\cite{LP}
\end{rem}

\begin{prop}\
  \begin{enumerate}

\item[(a)]
 There are 2 $G$-orbits of maximal cones of $\Sigma_{33}$ with representatives:
  \begin{itemize}
    \item
      $\Cone(\e_{11},-\e_{12},-\e_{21},\e_{22}\}$
    \item $\Cone(\e_{12},\e_{23},\e_{31},-\e_{13},-\e_{21},-\e_{32}\}$.
    \end{itemize}
\item[(b)]
  There are 2 $G$-orbits of cones of dimension 3 with representatives:
\begin{itemize}
\item $\Cone(\e_{11},\e_{22},\e_{33})$
\item $\Cone(\e_{11},-\e_{12},\e_{22})$
\end{itemize}
  \item[(c)] There are 2 $G$-orbits of cones of dimension 2 with representatives:
  \begin{itemize}
  \item $\Cone(\e_{11},\e_{22})$
  \item $\Cone(\e_{11},-\e_{12})$.
  \end{itemize}
  \item[(d)] There is 1 $G$-orbit of rays with representative $\Cone(\e_{11})$.
\end{enumerate}
  \end{prop}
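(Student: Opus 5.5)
The plan is to use the description of facet normals obtained in the proof that $Q_{mn}$ is reflexive. A facet normal $\bu_F$ of $Q_{33}$ is a $3\times 3$ integer matrix $(a_{ij})$ with entries in $\{-1,0,1\}$ and all row and column sums $0$. The vertices of the face cut out by $\bu_F$ are $V_F=\{\pm\e_{ij}:a_{ij}=\mp 1\}$. The group $G$ acts on these matrices by permuting rows, permuting columns, transposing, and multiplying by $-1$.

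\textbf{Step 1: the possible normals.} A row with entries in $\{-1,0,1\}$ and sum $0$ is either zero or contains exactly one $+1$ and one $-1$. The same holds for columns. So the positive and negative parts of $(a_{ij})$ are partial permutation matrices with the same row and column supports. The nonzero entries then form a union of even cycles in the bipartite graph $K_{33}$. With only three rows and three columns, this leaves two shapes up to $G$:
\begin{itemize}
\item a $4$-cycle, i.e.\ a $2\times 2$ block $\begin{pmatrix}-1&1\\1&-1\end{pmatrix}$ with zeros elsewhere;
\item a $6$-cycle, i.e.\ the difference of the permutation matrices of $(123)$ and $(132)$.
\end{itemize}
Their vertex sets are, respectively, $\{\e_{11},-\e_{12},-\e_{21},\e_{22}\}$ and $\{\e_{12},\e_{23},\e_{31},-\e_{13},-\e_{21},-\e_{32}\}$. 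I would check that $G$ is transitive on matrices of each shape; for the $6$-cycles, negation and transposition handle the two cyclic orientations.

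\textbf{Step 2: both shapes give facets.} For each shape I would verify that the vertex set spans $N_{\R}$, which has rank $4$. Using the relations $\sum_j\e_{ij}=\0$ and $\sum_i\e_{ij}=\0$, I would write the vectors in the basis $\{\e_{ij}:i,j\le 2\}$ and compute a rank. This gives (a), once I note that neither vertex set is contained in the other. For example, a normal containing the $4$-cycle vertices forces the third row and third column of the matrix to vanish, so it cannot be a $6$-cycle matrix.

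\textbf{Step 3: lower-dimensional faces.} Here I would list the proper faces of the two facets, viewed as polytopes, and reduce modulo $G$ and the facet stabilizers. The $4$-cycle facet is a $3$-simplex. Its triangles are all $G$-equivalent to $\{\e_{11},-\e_{12},\e_{22}\}$, and its edges have the two types $\{\e_{11},\e_{22}\}$ (same sign, disjoint row and column) and $\{\e_{11},-\e_{12}\}$ (opposite sign, shared row). In the $6$-vertex facet I would identify the triangles $\{\e_{12},\e_{23},\e_{31}\}$, which are of type $\{\e_{11},\e_{22},\e_{33}\}$ after a column permutation, and triangles of the second type. I would also show that no other subsets are faces. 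For instance, $\{\e_{11},-\e_{22}\}$ is not an edge: any admissible normal with $a_{11}=-1$ and $a_{22}=1$ forces a further $\pm 1$ entry in row $1$ and column $2$, hence a third vertex in the face. Likewise $\{\e_{11},\e_{12}\}$ is impossible, since it would need two $-1$ entries in one row. For (d), $G$ is transitive on the vertices $\pm\e_{ij}$.

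\textbf{Main obstacle.} I expect the hardest part to be proving completeness in (b) and (c): no further $2$- or $3$-dimensional faces exist, and each listed type really is a face. The approach I would try first is to treat every face as an intersection of facet vertex sets. A subset $S$ is then a face exactly when $S=\bigcap\{V_F: S\subseteq V_F\}$. Using the explicit description of all $4$-cycle and $6$-cycle normals, this reduces to a finite check on sign and position patterns up to $G$, which I would organize by how many rows and columns $S$ touches.
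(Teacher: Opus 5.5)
Your proposal is correct. Part (a) is the paper's argument: integral facet normals plus zero row and column sums make the support of $\bu_F$ a $4$-cycle or a $6$-cycle in $K_{33}$, which are the paper's Cases I and II. Your rank check also supplies the spanning verification the paper leaves implicit in Case II.

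There is one harmless slip. With $V_F=\{\pm\e_{ij}:a_{ij}=\mp 1\}$, the matrix $P_{(123)}-P_{(132)}$ cuts out the negative of the vertex set you list; this does not matter since $-\id\in G$. Also, $S_3\times S_3$ alone is already transitive on the twelve $6$-cycle normals, because conjugation swaps $(123)$ and $(132)$.

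For (b)--(d) your route genuinely differs from the paper's.
\begin{itemize}
\item \emph{The paper works bottom-up.} It takes an arbitrary real supporting functional $\bu$ with entries in $[-1,1]$ and zero line sums. It uses the local rule that an entry $\mp 1$ forces the rest of its row and column into $[0,1]$ (resp.\ $[-1,0]$). It then classifies faces by their number of vertices and by the rows and columns they meet, shows that four or more vertices force a facet, and writes down an explicit $\bu$ for each representative.
\item \emph{You work top-down from the facet list.} You use that every proper face is a face of a facet and equals the intersection of the facets containing it. This makes existence automatic: for example, $\{\e_{12},\e_{23},\e_{31}\}$ is the common part of the two $6$-vertex facets containing it. Completeness becomes a finite check.
\item \emph{Trade-off.} Your approach depends on the facet list from (a) being complete. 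The paper's approach instead produces explicit supporting hyperplanes for each face type.
\end{itemize}

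The obstacle you anticipate largely disappears once you notice that the $6$-vertex facet is an octahedron.
\begin{itemize}
\item In $N$ one has $\e_{12}-\e_{21}=\e_{23}-\e_{32}=\e_{31}-\e_{13}$. So the segments $[\e_{12},-\e_{21}]$, $[\e_{23},-\e_{32}]$, $[\e_{31},-\e_{13}]$ share the midpoint $\frac12(\e_{12}-\e_{21})$, which is the barycentre used in the next proposition.
\item Their three half-diagonals are linearly independent because the facet is $3$-dimensional.
\item Hence the proper faces of this facet are the $8$ triangles taking one vertex from each antipodal pair and the $12$ edges joining non-antipodal vertices. All of these are of the types listed in (b) and (c).
\end{itemize}
Together with the $3$-simplex facet, whose faces are all its vertex subsets, this yields (b)--(d) without further case analysis.
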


\begin{proof}
  Since the cones of $\Sigma_{33}$ are cones over faces of $Q_{33}$, it is equivalent to find the $G$-orbits of faces of $Q_{33}$.

 (a)  We start with the facets of $Q_{33}$ to determine $G$-orbits of maximal cones of $\Sigma_{33}$.
  
Let $F$ be a facet and $\bu_F=(a_{ij})$ its facet normal.
Then the only possible rows and columns of $\bu_F$ are permutations of the vector $[1,-1,0]$ or the zero vector.
Note that if $a_{ij}=\pm 1$ if and only if  $\mp e_{ij}\in H_{\bu_F,-1}$.
Since $H_{\bu_F,-1}\cap Q_{33}$ must span $N$ we must have at least 4 non-zero entries in $\bu_F$ and at least 2 non-zero rows.

There are 2 cases, each leading to an orbit of maximal cones under the action of $G=[(S_3\times S_3)\rtimes C_2]\times C_2$.

\noindent\textbf{Case I:} $\bu_F$ has a zero row.

Then the two non-zero rows must be negatives of each other and so must have a zero in the same column.  Then $\bu_F$ must also have a zero column. 
There are 18 normal vectors $\bu_F$ of this form, determined by
the choice of the zero row, the choice of the zero column and the choice of sign.
The corresponding cone over the facet $F$ with zero row $r$, zero column $s$ and sign $(-1)^t$
is  
$(-1)^t\Cone(\e_{ip},-\e_{iq},-\e_{jp},\e_{jq})$ for $i<j,p<q, i,j\ne r;p,q\ne s$
and is clearly smooth.
We note that these cones form one orbit with representative
$\Cone(\e_{11},-\e_{12},-\e_{21},\e_{22})$
as we can choose $(-1)^t(\sigma,\tau)\in G$ with $\sigma(1)=i,\sigma(2)=j,\tau(1)=p,\tau(2)=q$.    
The stabilizer subgroup of our representative is $\langle (-(1,2),\id),(\id,-(1,2)),\tr\rangle\cong D_8$ (where the transpose map $\tr$ sends $\e_{ij}$ to $\e_{ji}$).

\noindent\textbf{Case II:} $\bu_F$ has all rows non-zero.

Then since all rows and columns of $\bu_F$  must be permutations of the vector $[1,-1,0]$,  the cones of this form must lie in one $G$-orbit.
Note that there is exactly one $r=-1,0,1$ in each row or column.
Recording the position of $r=-1,0,1$ as $(i,\sigma_r(i)),i=1,2,3$, we see that each $\sigma_r\in S_3$.
Choosing $\sigma_0\in S_3$ (that is: fixing the positions of the 3 zeros), we see that $\sigma_1(j)\ne \sigma_0(j)$ for all $j=1,2,3$.  So $\sigma_0^{-1}\sigma_1$ cannot have fixed points, and so must be one of the two  3-cycles. 
There are then 6 choices for $\sigma_0$ and 2 choices for $\sigma_0^{-1}\sigma_1$.  This determines $\sigma_0,\sigma_1$ and hence $\sigma_{-1}$.  So we have 12 possibilities.

Choosing $\sigma_0$ to be $\id$ and $\sigma_1$ to be $(123)$, we get an orbit representative:
$$\Cone(\e_{12},\e_{23},\e_{31},-\e_{13},-\e_{21},-\e_{32})$$
The resulting 12 maximal cones in the orbit are not simplicial.
The stabilizer subgroup of this cone in $G$ contains $\sgn(\sigma)(\sigma,\sigma)$ and $-\tr$ and has size 12.
This shows that the stabilizer subgroup is $\langle (-1)((12),(12)),(-1)((23),(23)),-\tr\rangle \cong S_3\times C_2\cong D_{12}$.

We need to determine the orbits of the lower dimensional faces.  Suppose $H_{\bu,-1}$ with $\bu=(a_{ij})\in M_{33}(\R)$ cuts out a face of $Q_{33}$.  We want to determine the $\pm \e_{ij}$ in $H_{\bu,-1}\cap Q_{33}$.

We have already observed that $-1\le a_{ij}\le 1$ and that rows and columns sum to 0.  Since $a_{ij}=1$ if and only if $-\e_{ij}\in H_{\bu,-1}$ and $a_{ij}=-1$ if and only if $\e_{ij}\in H_{\bu,-1}$, we may have at most one of $\{\e_{ij},-\e_{ij}\}$ in $H_{\bu,-1}$.

If $\e_{ij}\in H_{\bu,-1}$ or equivalently $a_{ij}=-1$, then all other entries in row i and column j must be between 0 and 1, inclusive.
Similarly, if $-\e_{ij}\in H_{\bu,-1}$ or equivalently $a_{ij}=1$, then all other entries in row i and column j must be between -1 and 0, inclusive.
In particular, $\bu$ may have at most two $\pm 1$ entries in any row or column, and if so, they need to be of opposite sign.

This also shows that if  $\e_{ij},-\e_{pq}\in H_{\bu,-1}$ for $i\ne p,j\ne q$, then $a_{ij}=-1,a_{pq}=1,a_{iq}=a_{pj}=0$.
But this completely determines $\bu\in M_{33}({-1,0,1})$ and shows that $\bu$ is the normal vector of a facet of $Q_{33}$.  This implies that  $\e_{ij},-\e_{pq}$, $i\ne p, j\ne q$ cannot be vertices of a face of dimension less than 4.

We wish to determine the possible subsets $H_{\bu,-1}\cap Q_{33}$.

(d) Note that it is possible to find a $\bu$ such that $H_{\bu,-1}\cap Q_{33}=\{\e_{11}\}$.  
For example, the face $\Conv(\e_{11})$ can be cut out by
$$\bu=\begin{bmatrix}-1&\frac{1}{2}&\frac{1}{2}\\\frac{1}{3}&-\frac{1}{4}&-\frac{1}{12}\\\frac{2}{3}&-\frac{1}{4}&-\frac{5}{12}\end{bmatrix}$$
Since we may map $\e_{11}$ to $\pm \e_{ij}$ under the group action, any $\Conv(\pm \e_{ij})$ is a 1-dimensional face of $Q_{33}$.

(c) Now assume that $|H_{\bu,-1}\cap Q_{33}|=2$.  This means that $|\{(i,j):a_{ij}=\pm 1\}|=2$. 
Let $$r(\bu)=\{i: \exists j, a_{ij}=\pm 1\};\qquad 
c(\bu)=\{j:\exists i, a_{ij}=\pm 1\}$$ be the set of row indices (resp. column indices) whose row (column) contains a $\pm 1$.
There are 2 cases:

\noindent\textbf{Case I:} $|r(\bu)|=1$ or $|c(\bu)|=1$

This means that the 2 $(\pm 1)$-entries lie in one row or column.
For example, the two-dimensional  face $\Conv(\e_{11},-\e_{12}\}$ can be cut out by
$$\bu=\begin{bmatrix}-1&1&0\\\frac{1}{2}&-\frac{1}{3}&-\frac{1}{6}\\\frac{1}{2}&-\frac{2}{3}&\frac{1}{6}\end{bmatrix}$$
and $\e_{11},-\e_{12}$ can be extended to the $\Z$-basis $\e_{11},-\e_{12},-\e_{21},\e_{22}$.  This shows that this is a smooth face.
All other faces of the form $\pm \Conv(\e_{ij},-\e_{iq})$ or $\pm \Conv(\e_{iq},-\e_{jq})$ are in the $G$-orbit of $\Conv(\e_{11},-\e_{12})$.

\noindent\textbf{Case II:} $|r(\bu)|=|c(\bu)|=2$.

Let $r(\bu)=\{i,p\}$, $c(\bu)=\{j,q\}$.
We have observed that $\Conv(\e_{ij},-\e_{pq})$, $i\ne p$,$j\ne q$, cannot be a face.
However if $\epsilon=\pm 1$, $\Conv(\epsilon\e_{ij},\epsilon\e_{pq})$, $i\ne p$,$j\ne q$ are two-dimensional faces.
For example, the two-dimensional  face $\Conv(\e_{11},\e_{22}\}$ can be cut out by
$$\bu=\begin{bmatrix}-1&\frac{1}{2}&\frac{1}{2}\\\frac{2}{3}&-1&\frac{1}{3}\\\frac{1}{3}&\frac{1}{2}&-\frac{5}{6}\end{bmatrix}$$
and $\e_{11},\e_{12}$ can be extended to the $\Z$-basis $\e_{11},-\e_{12},-\e_{21},\e_{22}$.  This shows that this is a smooth face.
All other faces of the form $\Conv(\epsilon\e_{ij},\epsilon\e_{pq}),i\ne p,j\ne q$  are in the $G$-orbit of $\Conv(\e_{11},\e_{22})$.

(b) Next assume that $|H_{\bu,-1}\cap Q_{33}|=3$, or $|\{(i,j): a_{ij}=\pm 1\}|=3$.  Note that the dimension of the face determined could be at most 3.
Since the 3 $(\pm 1)$-entries cannot lie in the same row or column, we must have $2\le |r(\bu)|,|c(\bu)|\le 3$.

There are 3 cases:

\noindent\textbf{Case I:} $|r(\bu)|\ne |c(\bu)|$.

We claim that this case cannot occur.  Suppose $|r(\bu)|=2<|c(\bu)|=3$.  Since there are 3 $(\pm 1)$-entries and $|r(\bu)|=2$, there exists a row $i$ with 2 $(\pm 1)$-entries.  In fact then this means that $a_{ir}=1$,$a_{is}=-1$ and $a_{it}=0$ for some distinct $r,s,t$. Since $|c(\bu)|=3$, there exists $p\ne i$ such that $a_{pt}=\pm 1$.
If $a_{pt}=1$ then $\e_{is},-\e_{pt}\in H_{\bu,-1}$ and so $\bu$ determines a facet, and so gives a contradiction.
If $a_{pt}=-1$ then $-\e_{ir},\e_{pt}\in H_{\bu,-1}$ and so $\bu$ determines a facet, and hence a contradiction.

If  $|c(\bu)|=2<|r(\bu)|=3$, we can apply the transpose map to get into the previous case, and so this cannot occur as well.

\noindent\textbf{Case II:} $|r(\bu)|=|c(\bu)|=2$.

$r(\bu)=\{i,p\}$, $c(\bu)=\{j,q\}$.  Since there are 3 $(\pm 1)$-entries, 1 row, say $i$, and 1 column, say $j$, must have 2 $(\pm 1)$-entries.
So we have $\pm \e_{ij},\mp \e_{iq},\mp \e_{pj}$ in our face determined by $\bu$.

Note that $\Conv(\e_{11},-\e_{12},-\e_{21}\}$ is cut out by
$$\bu=\begin{bmatrix}-1&1&0\\1&-\frac{1}{2}&-\frac{1}{2}\\0&-\frac{1}{2}&\frac{1}{2}\end{bmatrix}$$
It is easy to see that $\pm \Conv(\e_{ij},-\e_{iq},-\e_{pj})$ is in its $G$-orbit and since  $\e_{11},-\e_{12},-\e_{21},\e_{22}$ is a $\Z$-basis of $N$, the 3-dimensional cones in this orbit are all smooth.

\noindent\textbf{Case III:} $|r(\bu)|=|c(\bu)|=3$.

$r(\bu)=\{1,2,3\}=c(\bu)$.  Then $\epsilon_i \e_{i,\sigma(i)},i=1,2,3\in H_{\bu,-1}$ where $\epsilon_i=\pm 1,\sigma\in S_3$.  Since we have seen that
$\e_{i,\sigma(i)}$ and $\e_{j,\sigma(j)}$, $i\ne j$ cannot have different signs in a face of dimension $< 4$, we see that we have $\epsilon\e_{i,\sigma(i)}\in H_{\bu,-1}$

Note that $\Conv(\e_{11},\e_{22},\e_{33}\}$ is cut out by
$$\bu=\begin{bmatrix}-1&\frac{1}{2}&\frac{1}{2}\\\frac{1}{2}&-1&\frac{1}{2}\\\frac{1}{2}&\frac{1}{2}&-1\end{bmatrix}$$
It is easy to see that $\Conv(\epsilon\e_{1\sigma(1)},\epsilon\e_{2\sigma(2)},\epsilon\e_{3\sigma(3)})$ is in its $G$-orbit.
Since $\e_{33}=\e_{11}+\e_{12}+\e_{21}+\e_{22}$, $\e_{11},\e_{22},\e_{33},\e_{12}$ is a $\Z$-basis of $N$ , and the 3-dimensional cones in this orbit are all smooth.

Now we need only show that if $|H_{\bu,-1}\cap Q_{33}|\ge 4$, then the resulting face is 4-dimensional, and we have already dealt with that case.

Again $|r(\bu)|,|c(\bu)|\ge 2$.

If $|r(\bu)|=2$ then we have at least 2 rows with 2 $(\pm 1)$-entries and so are permutations of $[1,-1,0]$.  As rows and columns sum to zero, $\bu$ is determined as a matrix with only entries in $\{1,-1,0\}$ and so $\bu$ determines a facet.  The same argument applies to the $|c(\bu)|=2$ case.

If $|r(\bu)|=|c(\bu)|=3$, then we have seen that $\epsilon\e_{i,\sigma(i)},i=1,2,3\in H_{\bu,-1}$ for some $\epsilon=\pm 1$ and $\sigma\in S_3$, and we have at least one row with 2 $(\pm 1)$-entries: say $-\epsilon\e_{p,q}$, $q\ne \sigma(p)$.  Then there exists $j\ne p$, $\sigma(j)\ne q$, with $-\epsilon\e_{p,q}$ and $\epsilon\e_{j,\sigma(j)}\in H_{\bu,-1}$ and so $\bu$ determines a facet.
\end{proof}

We see from the proposition that our fan $\Sigma_{33}$ is not smooth, but isn't far from being so.
In terms of cones in our fan, we have 1 orbit of 1-dimensional (smooth) cones, 2 orbits of 2-dimensional smooth cones, 2 orbits of 3-dimensional smooth cones and 1 smooth and 1 non-simplical orbit of 4-dimensional cones.

\begin{prop}
  There exists a smooth $G$-invariant projective toric variety $SK_{3,3}$ determined by subdividing each of  4-dimensional cones in the non-simplicial $G$-orbit by a star-subdivision with respect to its barycentre.
  The resulting flasque $G$-resolution is
  $$0\to (\Z A_2)^{\otimes 2}\to \Z[G/D_8]\oplus \Z[G/D_{12}]\to \Pic(SK_{3,3})\to 0$$
\end{prop}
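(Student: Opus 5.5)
The plan is to carry out the star subdivision explicitly on one representative of the non-simplicial orbit, check smoothness on a few representatives using the group action, and then read off the flasque resolution from the divisor class sequence.

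\textbf{Choosing the subdivision points.} Let $\sigma=\Cone(\e_{12},\e_{23},\e_{31},-\e_{13},-\e_{21},-\e_{32})$ be the representative of the non-simplicial orbit from part (a) of the previous proposition. Its barycentre is the primitive generator of the ray through
$$\v_\sigma=\e_{12}+\e_{23}+\e_{31}-\e_{13}-\e_{21}-\e_{32}.$$
I would check two things about $\v_\sigma$:
\begin{itemize}
\item it is primitive in $N$, by writing it in the basis $\e_{11},\e_{12},\e_{21},\e_{22}$ using the row and column relations;
\item it is fixed by $\Stab_G(\sigma)=\langle -((12),(12)),-((23),(23)),-\tr\rangle\cong D_{12}$. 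Each generator permutes the six generators of $\sigma$, so this is immediate.
\end{itemize}
Following the CTHS approach recalled in Remark~\ref{rem:condition*}, I set $\v_{g\sigma}=g\v_\sigma$. This is well defined because $\Stab_G(\sigma)$ fixes $\v_\sigma$. Distinct maximal cones of the orbit have disjoint interiors, so the star subdivisions at the twelve points $g\v_\sigma$ commute and only affect their own cones. The resulting fan is therefore $G$-invariant. It is also projective, since a star subdivision of a projective fan is projective (Proposition 11.1.6 of CLS, or by adjusting a strictly convex support function).

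\textbf{Smoothness.} The facet $F=\sigma\cap H_{\bu,-1}$ of $Q_{33}$ is a $3$-polytope with six vertices. By the previous proposition its proper faces are triangles, coming from the two $G$-orbits of $3$-dimensional cones. Hence $F$ is an octahedron. Its eight triangles are:
\begin{itemize}
\item two of ``permutation'' type, namely $\{\e_{12},\e_{23},\e_{31}\}$ and its negative;
\item six of mixed type, such as $\{\e_{12},-\e_{13},\e_{23}\}$.
\end{itemize}
After the subdivision, $\sigma$ is replaced by the eight cones $\Cone(\tau,\v_\sigma)$, one for each triangle $\tau$. The group $\Stab_G(\sigma)$ acts on these eight triangles with two orbits, so smoothness of the new cones reduces to two $4\times 4$ determinant computations in the basis $\e_{11},-\e_{12},-\e_{21},\e_{22}$. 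I expect both determinants to equal $\pm1$. The faces of the new cones that lie in the old fan are already smooth by the previous proposition, and the cones of the other orbit from part (a) are already smooth. This determinant check, together with verifying that $F$ really is an octahedron, is the main step that needs care. If a determinant turned out to be $2$, I would have to add further $G$-equivariant star subdivisions.

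\textbf{Rays and the flasque resolution.} The rays of $\SK_{3,3}$ fall into two $G$-orbits:
\begin{itemize}
\item the $18$ rays $\pm\e_{ij}$ from part (d) of the previous proposition;
\item the $12$ new barycentres $g\v_\sigma$.
\end{itemize}
Since $|G|=144$, the orbit sizes give stabilizers of order $8$ and $12$.
\begin{itemize}
\item The stabilizer of $\e_{11}$ is generated by $((23),\id)$, $(\id,(23))$ and $\tr$, which gives $D_8$. Note that $-\id$ cannot help here, because $-\e_{11}$ is not of the form $\e_{ij}$.
\item The stabilizer of $\v_\sigma$ equals $\Stab_G(\sigma)\cong D_{12}$, because $\v_\sigma$ lies in the interior of a unique maximal cone.
\end{itemize}
Consequently $\Div_{T_L}(\SK_{3,3})\cong\Z[G/D_8]\oplus\Z[G/D_{12}]$.

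Finally, since the fan is smooth and complete, the divisor class sequence of Section 3 gives the exact sequence
$$0\to(\Z A_2)^{\otimes 2}\xrightarrow{\divT}\Z[G/D_8]\oplus\Z[G/D_{12}]\to\Pic(\SK_{3,3})\to0.$$
By Voskresenskii's theorem this is a $G$-flasque resolution.
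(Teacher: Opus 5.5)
Your plan is correct and is essentially the paper's own argument: star-subdivide the single non-simplicial orbit at its $\Stab_G(\sigma)$-fixed barycentre, read off the two ray orbits $\pm\e_{ij}$ and $g(\e_{12}-\e_{21})$ with stabilizers $D_8$ and $D_{12}$, and apply the divisor class sequence; your octahedron and two-orbit reduction just makes explicit the smoothness the paper asserts, and both determinants do come out to $\pm1$. Two slips need fixing. First, the sum of the six generators is $\v_\sigma=3(\e_{12}-\e_{21})$, which is not primitive, so the subdivision ray is generated by $\e_{12}-\e_{21}$ (using $\v_\sigma$ itself would give determinants $\pm3$). Second, since a simplicial $3$-polytope with six vertices need not be an octahedron, use the identity $\e_{12}-\e_{21}=\e_{23}-\e_{32}=\e_{31}-\e_{13}$: it shows $F$ is a cross-polytope with opposite pairs $\{\e_{12},-\e_{21}\}$, $\{\e_{23},-\e_{32}\}$, $\{\e_{31},-\e_{13}\}$. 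Hence the second ``permutation'' triangle is $\{-\e_{13},-\e_{21},-\e_{32}\}$ (the image under $-\tr$), not the negative of $\{\e_{12},\e_{23},\e_{31}\}$, which does not lie in $F$.
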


\begin{proof}

Looking at our representative of the orbit of 4-dimensional non-simplicial cones:
$$\Cone(\e_{12},\e_{23},\e_{32},-\e_{13},-\e_{21}-\e_{32}),$$ its barycentre in terms of the $\Z$-basis $\{\e_{11},\e_{12},\e_{21},\e_{22}\}$ is
$\e_{12}-\e_{21}$.  We note that the stabilizer subgroup of this barycentre is the same as that of the maximal cone and hence is isomorphic to $D_{12}$.

The star subdivision of this cone with respect to $\e_{12}-\e_{21}$ leads to smooth cones of the form
$\Cone(\e_{12}-\e_{21},\tau)$, $\tau$ 3-dimensional subcone of the original cone.

Effectively, the fan of $\SK_{33}$ is obtained from the fan of faces over $Q_{33}$ by performing one star subdivision for each non-simplicial 4-dimensional cone with respect to its barycentre.  Each such star subdivision preserves the projectivity of the fan. As these cones are all in one orbit and the stabilizer subgroup of the cone and the barycentre agree, the resulting fan is $G$-invariant. 
    
$\e_{11}$ has orbit $$\{\pm \e_{ij}:1\le i\ne j\le 3\}$$ of size 18.  The group $G=(S_3\times S_3)\rtimes C_2\times C_2$ has size 144.  The stabilizer subgroup is  $\langle \tr,((23),\id),(\id,(23))\rangle\cong D_8$.
As previously observed, $\e_{12}-\e_{21}$ has stabilizer subgroup isomorphic to $D_{12}$.

So the flasque resolution corresponding to our smooth projective $G$-invariant toric variety $\SK_{3,3}$ is
$$0\to (\Z A_2)^{\otimes 2}\to \Z[G/D_8]\oplus \Z[G/D_{12}]\to \Pic(\SK_{3,3})\to 0$$
\end{proof}

\section{Comparison of Techniques for determining Coflasque/Flasque Resolutions}

We have discussed two methods for constructing coflasque/flasque resolutions of a $G$-lattice:

As first proven in~\cite{CTS77}, one can construct a flasque $G$-resolution of a $G$-lattice $M$ by finding a coflasque $G$-resolution of the $\Z$-dual lattice $N$ by 
taking $P=\oplus_{H\le G}\Ind_H^G(N^H)\to N$ where $N=M^0$.

This can be done whenever one knows the fixed point lattice $N^H$ for all conjugacy classes of subgroups $H$ in $G$.
So this may be done either with the help of a computer, or with a fair amount of information about the lattice perhaps coming from combinatorics or geometry.  

Note that the resolution given above is by no means minimal.  More information would be required to find a minimal such resolution.  We remark that Hoshi and Yamasaki in ~\cite{HY17} use some clever techniques to reduce their flasque resolutions so that the flasque lattices are of more reasonable ranks.

One could also determine a flasque resolution for a $G$-lattice $M$ by finding a smooth projective model
$X$ of an algebraic $\bk$-torus $T$ split by a Galois extension $L/\bk$ with Galois group $G=\Gal(L/\Bbbk)$.  Implicitly this assume
s that one has chosen a base field $\bk$ such that the inverse Galois problem is solvable for $G$ over $\bk$.

By our earlier discussion, there exists a smooth projective fan $\Sigma$ in $N_{\R}$ where $N=M^0$ such that $X_L=X_{\Sigma}$ and $G\le \Aut(X_{\Sigma})$.
A flasque $G$-resolution of $M$ is then given by 
$$0\to M\stackrel{\divT}{\to} \Div_{T_L}(X_{\Sigma})\to \Pic(X_{\Sigma})\to 0$$

We know from earlier discussion that any two flasque resolutions of the same $G$-lattice $M$ (regardless of the construction) produce flasque lattices which are similar as $G$-lattices, and whose stable equivalence class determines information about the stable birational invariants of the associated algebraic $\bk$-torus.

However, the following is an observation about the construction which may point to some geometric differences between flasque resolutions of a given $G$-lattice.

Suppose $\varphi:Q\to N$ is a surjection of a  $G$-permutation lattice $Q$ with $\Z$-basis $Y$ onto a given $G$-lattice $M$.  We note that the map $\varphi$ is completely determined by $\{n_y=\varphi(y): y\in Y/G\}$ where $Y/G$ denotes a set of representatives of the $G$-orbits of $G$ on the $G$-set $Y$. 

We will assume that there exists a Galois extension $L/\bk$ with Galois group $G$. By our earlier discussion, if $S_Y=\{gn_y:g\in G,y\in Y/G\}$ contains a $\Z$-basis of $N$, and $P_Y=\Conv(S_Y)$ is a reflexive polytope in $N_{\R}$ containing 0 in its interior, then the fan $\Sigma_Y$ determined by cones over faces of $P_Y$ is a normal projective $G$-invariant toric model of the split torus $T_L$ corresponding to the algebraic $\bk$-torus $T$ with character lattice $\widehat{T_L}=M$. 
The divisor class group sequence of the normal projective toric variety $X_{\Sigma_Y}$ is then
by construction, the $\Z$-dual of the $G$-exact sequence
$$0\to \ker(\varphi)\to Q\to N\to 0$$

In particular, the map $Y/G\to N, y\to n_y$ which determines the $G$-map $\varphi:Q\to N$ also determines the dual map $\varphi^*:M\to Q$: $m\in M$ determines uniquely an element $f_m\in N^0$, where $f_m(n)=\langle m,n\rangle$.
Then  $\varphi^*(m)(y)=f_m\circ \varphi(x)=\langle m,n_y\rangle$.
So $\varphi^*(m)$ corresponds to the map $Q\to \Z,y\to \langle m,n_y\rangle$ and hence
$\varphi^*(m)=\sum_{g\in G}\sum_{y\in Y/G}\langle m,gn_y\rangle gy$.

Note that since $M$ and $M^0$ are $G$-modules,  the pairing $M\times M^0\to \Z$ is $G$-invariant.
If $g\in G$ and $n\in N=M^0$, $gf_n:M\to \Z$ is the unique map $f_{gn}:M\to \Z$ corresponding to $gn$.  So $gf_n(m)=f_n(g^{-1}m)=\langle g^{-1}m,n\rangle$ should be equal to $f_{gn}(m)=\langle m,gn\rangle$.

We wish to observe that although a smooth projective model of an algebraic $\bk$-torus $T$ split by $L/\bk$ with character lattice $\hat{T_L}=M$ produces a flasque resolution of $M$, it is not true that all flasque resolutions of $M$ arise in this way, although they all provide the same stable birational information about the algebraic $\bk$-torus $T$.

We will show that it is  possible to construct a coflasque resolution of the $G$-lattice $N=M^0$ 
using a surjection $\varphi:Q\to N$ such that $Q=\Z Y$ and such that $P_Y=\Conv(\varphi(Y))$ is a polytope satisfying the above conditions, but such that the resulting fan $\Sigma_Y$ consisting of cones over faces of $P_Y$ is not even simplicial, let alone smooth.

Let $M$ be the $S_n$-lattice $\Lambda(A_{n-1})$. Its $\Z$-dual lattice is $\Z A_{n-1}$.
The augmentation sequence 
$$0\to \Z A_{n-1}\to \Z[S_n/S_{n-1}]\to \Z\to 0$$
is an $S_n$-exact sequence, as is its dual sequence
$$0\to \Z\to \Z[S_n/S_{n-1}]\to \Lambda(A_{n-1})\to 0$$

Note that the corresponding algebraic $\bk$-torus to the $S_n$-lattice $\Lambda(A_{n-1})$ is a norm-1 torus.  Let $K/\bk$ be a Galois extension with Galois group $S_n$ and let $E=K^{S_{n-1}}$ be the correponding fixed field.
  Then the corresponding sequence of algebraic $\bk$-tori is:
  $$1\to R^{(1)}_{E/\bk}(\Gm_{m,E})\to R_{E/\bk}(\Gm_{m,E})\stackrel{N_{E/\bk}}{\to} \Gmk\to 1$$
  where $R_{E/\bk}$ indicates the Weil restriction of scalars functor and $N_{E/\bk}$ is the norm map.

Since $\Z A_{n-1}$ restricted to $S_{n-1}$ is permutation with permutation basis $\{\e_i-\e_n:i=1,\dots,n-1\}$, one can see that $\Z A_{n-1}\otimes \Z[S_n/S_{n-1}]\cong \Z[S_n/S_{n-2}]$, where the permutation basis is given by $y_{ij}=(\e_i-\e_n)\otimes \e_j, 1\le i\ne j\le n$.

Tensoring the augmentation sequence  with $\Z A_{n-1}$ one obtains an $S_n$-exact sequence:
$$0\to \Z A_{n-1}^{\otimes 2}\to \Z[S_n/S_{n-2}]\to \Z A_{n-1}\to 0$$

For $n=p$ prime, this is known to be a coflasque resolution of the $S_p$-lattice $\Z A_{p-1}$.
[This sequence is important for its connection to the question of the stable rationality of the centre of the generic division algebra. See ~\cite{BL91,Leb95}.]

The following quick proof will be useful for further observations:
To check that a $G$-lattice $C$ is coflasque, it suffices to check that for each prime $p$ dividing the group order of $G$, it is coflasque as a $G_p$-lattice where $G_p$ is a Sylow $p$-subgroup of $G$. (See, eg. ~\cite[1.8.25]{ShaGroupCoh}).

Note that  the lattice $\Z A_{n-1}$ is permutation as an $H$-lattice if $H\le S_n$ has a common fixed point on its action on $\{1,\dots,n\}$, the same is true for $\Z A_{n-1}^{\otimes 2}$.

So for $n=p$, $p$ prime, the Sylow $q$-subgroups of $S_p$, for $q\ne p$ all have common fixed points on their action on $[1,p]$.  So $\Z A_{p-1}^{\otimes 2}$ is permutation as a lattice for any Sylow q-subgroup of $S_q$ and so is coflasque.
A Sylow $p$-subgroup $G_p$ of $S_p$ is cyclic of order $p$ and acts transitively on $[1,p]$.  Then $\Z A_{p-1}^{G_p}=0$.
Since $\Z[S_p/S_{p-2}]$ is permutation and hence coflasque, we see from the LES of group cohomology for $G_p$ that
$H^1(G_p,\Z A_{p-1}^{\otimes 2})=0$.

[In fact it is additionally true that $\Z A_{p-1}^{\otimes 2}$ is permutation projective as an $S_p$-lattice.~\cite[9.12]{CTS87} (or see: ~\cite[Prop 3]{BL91}.)]

The polytope that corresponds to the surjection $\varphi:\Z[S_p/S_{p-2}]\to \Z A_{p-1}$ is the root polytope $P(A_{p-1})$ and so the toric variety of $P(A_{p-1})^0$ is a normal projective model of the algebraic $\bk$-torus corresponding to $(\Lambda(A_{p-1}),S_p)$.  However, as we have observed, the corresponding fan is not even simplicial for $p\ge 5$.
The $S_p$-orbits of facets are isomorphic to $\Delta_{i-1}\times \Delta_{p-i},i=1,\dots,p$ and cones over non-trivial products of simplices are not simplicial.

So the $S_p$-flasque resolution of $\Lambda(A_{p-1})$ that results from the dual sequence of this $S_p$-flasque resolution does not arise from a smooth projective model of the corresponding norm-1 algebraic torus.   

Based on this construction, it seems likely that many of the minimal flasque resolutions arising from the representation-theoretic construction also do not arise from smooth projective models of their corresponding algebraic tori, as this is a fairly restrictive condition.  Yet they still can be used to derive stable birational invariants.  It would be interesting to see whether flasque resolutions which arise from smooth projective models of algebraic tori have any stronger geometric properties or can be distinguished by some birational invariant.

To conclude, we will show that representation-theoretic techniques can be used to determine smaller flasque resolutions of $\Lambda(A_{n-1})$ as an $S_n$-lattice even in the cases in which $n$ is not prime.

We will actually construct smaller coflasque resolutions of $\Z A_{n-1}$ as an $S_n$-lattice.

For any subgroup $H$ of $S_n$,
$$H^1(H,\Z A_{n-1}^{\otimes 2})\cong \coker(\Z[S_n/S_{n-2}]^H\to (\Z A_{n-1})^H)$$
since permutation lattices are coflasque.

Let $\Orb^n_H$ be the set of $H$-orbits from the action of $H$ on $[1,n]$.
Now  $\Z[S_n/S_{n-1}]^H$ has $\Z$-basis $\{\sum_{i\in I}\e_i:I\in \Orb^n_H\}$
and $(\Z A_{n-1})^H$ is generated by
$$\{s_{I,J}=\frac{|J|\sum_{i\in I}\e_i-|I|\sum_{j\in J}\e_j}{\gcd(|I|,|J|)}: I\ne J\in \Orb^n_H\}$$

Let $Q$ be a permutation $S_n$-lattice with basis
$$\{y_{I,J}:\emptyset\ne I,J\subseteq [1,n],I\cap J=\emptyset\}$$

Then it is clear that the surjection $Q\to \Z A_{n-1}$ determined by sending $y_{I,J}\to s_{I,J}$ produces a coflasque resolution of $\Z A_{n-1}$ whose dual would give the divisor class sequence determined by the simplicial $S_n$-invariant projective model of this algebraic $\bk$-torus.

However, it is possible to reduce the size of this coflasque resolution.
We will start with the $S_n$-exact sequence:
$$0\to \Z A_{n-1}^{\otimes 2}\to \Z[S_n/S_{n-2}]\to \Z A_{n-1}\to 0$$
where $\Z[S_n/S_{n-2}]$ has basis $\{y_{ij}:1\le i\ne j\le n\}$ and $y_{ij}$ maps to $\e_i-\e_j$ under the second map  of the sequence.

Suppose $H$ is a subgroup of $S_n$ and $I\ne J\in \Orb_n^H$ have relatively prime orders.
Then $\sum_{i\in I,j\in J}y_{ij}$ is fixed by $H$ and maps to $s_{I,J}=|J|\sum_{i\in I}\e_i-|I|\sum_{j\in J}$.
We see that $\Z[S_n/S_{n-2}]^H\to \Z A_{n-1}^H$ is surjective.

We have observed that it suffices to show that our sequence is coflasque as restricted to each Sylow $p$-subgroup, $G_p$ of $S_n$.
If $H$ is a subgroup of $G_p$, then it is necessarily a $p$-group and its orbits on $[1,n]$ must have sizes which are powers of $p$.  Since $\Z A_{n-1}^{\otimes 2}$ is permutation as a $H$-lattice for any subgroup of $S_n$ which has a fixed point in its action on $[1,n]$, we see that we need only consider primes $p$ which divide $n$, as Sylow $q$-subgroups with $q$ not dividing $n$ must have a fixed point on their action on $[1,n]$.

Let $p$ be a prime divisor of $n$ and let $Q_p$ be the permutation $S_n$-lattice with basis
$$\{y_{I,J}:\emptyset\ne I,J\subseteq [1,n],I\cap J=\emptyset, |I|=p^r,|J|=p^s, 1\le r\le s\}$$
and let $Q_0=\Z[S_n/S_{n-2}]$.

Then $Q_0\oplus \oplus_{p|n}Q_p\to \Z A_{n-1},y_{I,J}\to s_{I,J}$ determines  a surjection that determines an $S_n$-coflasque resolution of $\Z A_{n-1}$.

For $n=4$, this resolution recovers the coflasque resolution corresponding to Kunyavskii's smooth projective model of $\Lambda(A_3)$ restricted to $S_4$.

The corresponding permutation lattice is $Q_0\oplus Q_2\cong \Z[S_4/S_2]\oplus \Z[S_4/S_2\times S_2]$, so that the flasque lattice would be of rank 15.

For $n=6$, we can reduce the above coflasque resolution still further.
From our construction, the permutation lattice would be $Q_0\oplus Q_2\oplus Q_3\cong \Z[S_6/S_4]\oplus \Z[S_6/S_2\times S_2\times S_2]\oplus \Z[S_6/S_4\times  S_2]\oplus \Z[S_6/S_3\times S_3]$.

A Sylow 2-subgroup of $S_6$ is $D_8\times C_2=\langle (1,2,3,4),(1,2)(3,4),(5,6)\rangle$.  
Suppose $H\le S_6$ has orbits of size 4 and 2 on $[1,6]$.
Then $y_{\{1,3\},\{5,6\}}+y_{\{2,4\},\{5,6\}}$ is fixed by this Sylow 2-subgroup (and hence by any subgroup) and is mapped to $s_{\{1,2,3,4\},\{5,6\}}$.  This shows that we may reduce the permutation lattice to $Q_0\oplus Q_2'\oplus Q_3$ where $Q_2'=\Z[S_6/(S_2\times S_2\times S_2)]$.

\bibliographystyle{alpha}
\bibliography{demazurerefs}
\end{document}